\title[The nodal line of the Robin Laplacian]{The nodal line of the second eigenfunction of the Robin Laplacian in ${\mathbb{R}}^2$ can be closed}
\author{J. B.~Kennedy}
\dedicatory{\upshape
Group of Mathematical Physics, University of Lisbon\\
Av.~Prof.~Gama Pinto 2, 1649-003 Lisboa, Portugal\\[.5em]
\texttt{jkennedy@cii.fc.ul.pt}
}
\newtheorem{theorem}{Theorem}[section]
\newtheorem{lemma}[theorem]{Lemma}
\newtheorem{proposition}[theorem]{Proposition}
\newtheorem{conjecture}[theorem]{Conjecture}
\theoremstyle{remark}
\newtheorem{remark}[theorem]{Remark}
\numberwithin{equation}{section}
\newcommand{\R}{\mathbb{R}}
\newcommand{\N}{\mathbb{N}}
\DeclareMathOperator{\dist}{dist}
\DeclareMathOperator{\supp}{supp}
\newcommand*{\subsubset}{\subset\joinrel\subset}
\begin{document}

\begin{abstract}
We construct a multiply connected domain in $\R^2$ for which the second eigenfunction of the Laplacian with Robin boundary conditions has an interior nodal line. In the process, we adapt a bound of Donnelly-Fefferman type to obtain a uniform estimate on the size of the nodal sets of a sequence of solutions to a certain class of elliptic equations in the interior of a sequence of domains, which does not depend directly on any boundary behaviour. This also gives a new proof of the nodal line property of the example in the Dirichlet case.
\end{abstract}

\thanks{\emph{Mathematics Subject Classification} (2000). 35P05 (35B05, 35J05)}

\thanks{\emph{Key words and phrases}. Laplacian, eigenfunction, nodal line, Robin boundary conditions}


\maketitle

\section{Introduction}
\label{sec:intro}

Let $\Omega \subset \R^N$ be a bounded, Lipschitz domain, and consider the eigenvalue problem for the Dirichlet Laplacian
\begin{equation}
	\label{eq:dirichlet}
	\begin{aligned}
	-\Delta u &=  \lambda u \quad\quad &\text{in $\Omega$}\\
	u &= 0  &\text{on $\partial \Omega$}
	\end{aligned}
\end{equation}
with its eigenvalues listed in increasing order and repeated according to their multiplicities, $0<\lambda_1<\lambda_2 \leq \lambda_3 \leq \ldots$, and the eigenvalue problem for the Robin and Neumann Laplacians
\begin{equation}
	\label{eq:robin}
	\begin{aligned}
	-\Delta u &= \mu u \quad\quad &\text{in $\Omega$}\\
	\frac{\partial u}{\partial\nu}+\beta u &= 0 &\text{on $\partial\Omega$},
	\end{aligned}
\end{equation}
with eigenvalues $0 \leq \mu_1 < \mu_2 \leq \mu_3 \leq \ldots$ again repeated according to their multiplicities. Here $\nu$ is the outward-pointing unit normal to $\partial\Omega$ and $\beta\geq 0$ is a constant, with $\beta=0$ corresponding, of course, to the Neumann problem.

Denote by $\psi$ an eigenfunction associated with the second eigenvalue of any of the above problems, $\lambda_2$ or $\mu_2(\beta)$, $\beta \in [0,\infty)$. Here we are interested in the behaviour of the nodal set of $\psi$, $\mathcal N:=\overline{\{x \in \Omega: \psi(x)=0\}}$, and the corresponding nodal domains, that is, the connected components of $\Omega \setminus \mathcal N$. In the Neumann case, it is a simple argument to show that
\begin{equation}
\label{eq:converse}
	\mathcal N \cap \partial \Omega \neq \emptyset
\end{equation}
for any bounded, Lipschitz $\Omega \subset\R^N$, and it was long conjectured that the same must be true for the Dirichlet problem \eqref{eq:dirichlet} (see, e.g., \cite{payne:73,payne:67} in $\R^2$, or \cite[Chapter~IX, Problem~45]{schoen:94} in $\R^N$). It has been shown that \eqref{eq:converse} indeed holds in the Dirichlet case for various classes of domains $\Omega$, most importantly for general convex $\Omega$ in $\R^2$ \cite{alessandrini:94,melas:92}, but also in certain cases in $\R^N$, e.g., on some ``thin" domains \cite{freitas:08,jerison:95}, or with various symmetries \cite{damascelli:00,lin:87,payne:73}.

However, rather intricate, multiply connected counterexamples to \eqref{eq:converse} have been found \cite{fournais:01,hoffmann:97}. In dimension three or higher, they can actually be chosen to be contractible (but not less intricate) \cite{kennedy:10}. Counterexamples have also been found on manifolds \cite{freitas:02} and on unbounded planar domains \cite{freitas:07}, but there is still a large amount of uncharted territory between the two sides. The key open questions at present seem to be whether \eqref{eq:converse} holds for simply connected domains in $\R^2$, and general convex domains in $\R^N$.

We also note that both positive and negative results were recently extended to the problem $-\Delta \psi = \lambda_2 |\psi|^{p-2}\psi$ with Dirichlet boundary conditions if $p$ is close to $2$ \cite{grumiau:09}.

So the issue is clearly quite delicate, making it rather unobvious as to whether the converse to \eqref{eq:converse} is possible in the Robin case \eqref{eq:robin}, especially when $\beta \to 0$ and we approach the Neumann problem. Let us write this as requiring that one of the two nodal domains be compactly contained in $\Omega$,
\begin{equation}
\label{eq:property}
	\Omega^-:=\{x \in \Omega:\psi(x)<0\} \subsubset \Omega,
\end{equation}
say, where we write $U \subsubset V$ to mean $U \subset K \subset V$ for some compact $K \subset \R^N$. Our primary result is that \eqref{eq:property} can in fact hold, and for the full range of $\beta>0$, at least in $\R^2$.

\begin{theorem}
\label{th:main}
	Fix $M>0$ and $\beta>0$. There exists a bounded, connected, open set $\Omega \subset \R^2$ with Lipschitz boundary 		and of area $M$, whose second Robin eigenvalue $\mu_2 (\Omega, \beta)$ of\eqref{eq:robin} is simple, with a 			corresponding eigenfunction $\psi$ satisfies $\{x \in \Omega: \psi(x) \leq 0\} \subsubset \Omega$.
\end{theorem}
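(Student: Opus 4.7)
The plan is to adapt the multiply-connected counterexample strategy used by Fournais and Hoffmann-Ostenhof--Nadirashvili in the Dirichlet setting to the Robin problem. One builds a one-parameter family of bounded, Lipschitz, multiply connected domains $\Omega_\varepsilon \subset \R^2$ of area $M$, degenerating as $\varepsilon \to 0$ to a limiting configuration (for instance, obtained by pinching off a thin neck of width of order $\varepsilon$) in which the second Robin eigenfunction is explicit and has a nodal set that is manifestly a closed curve in the interior, bounding a region $D\subsubset\Omega_\varepsilon$ on which the limit is negative, while being positive in a neighbourhood of $\partial\Omega_\varepsilon$. The topology of $\Omega_\varepsilon$ and the relative sizes of the relevant subregions are tuned so that, in the limit, $\mu_2(\Omega_\varepsilon, \beta)$ is a simple ground-state eigenvalue of the smaller piece, and the limiting second eigenfunction $\psi_0$ is forced (by orthogonality to the positive ground state and by the arrangement of the geometry) to carry the desired sign pattern.

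The first analytic task is spectral convergence: using min--max arguments together with cut-off functions concentrated near the degeneration, show that $\mu_2(\Omega_\varepsilon,\beta)$ converges to the explicit limit value and that the $L^2$-normalised second eigenfunction $\psi_\varepsilon$ converges in $H^1$ on compacta (away from the neck) to $\psi_0$, for which $\{\psi_0\leq 0\}\subsubset\Omega_\varepsilon$ by construction. This gives the conclusion pointwise in the limit but not for $\varepsilon>0$: $H^1$-convergence leaves open the possibility that, for $\varepsilon$ small but positive, the nodal set $\mathcal N_\varepsilon$ develops thin tendrils reaching out to $\partial\Omega_\varepsilon$. To exclude this, one invokes the interior Donnelly--Fefferman-type bound announced in the abstract, producing a uniform-in-$\varepsilon$ estimate on the location of $\mathcal N_\varepsilon$ in fixed interior compact regions. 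Combined with $H^1$-convergence, this forces $\mathcal N_\varepsilon$ into a small neighbourhood of $\mathcal N_0=\partial D$ for $\varepsilon$ sufficiently small, whence $\{\psi_\varepsilon\leq 0\}\subsubset\Omega_\varepsilon$ for the final choice $\varepsilon=\varepsilon_0$.

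The main obstacle is precisely this uniform nodal-set estimate. Classical Donnelly--Fefferman bounds involve doubling or frequency constants that depend on the global geometry of the domain, and would in general blow up under the degeneration $\varepsilon\to 0$ of my construction. The required new ingredient, and what the paper promises to supply, is an interior variant whose constants depend only on local data of the elliptic equation $-\Delta u = \mu u$ on sub-balls safely inside $\Omega_\varepsilon$ (a bound on $\mu$, interior $L^\infty$ control, etc.), so that the estimate is stable as the boundary geometry degenerates. Granted this, the remaining points are routine: simplicity of $\mu_2(\Omega_{\varepsilon_0},\beta)$ follows from the simplicity of the limiting eigenvalue (a ground state on a single connected piece) together with continuity of eigenvalues, with any residual degeneracy broken by a small perturbation of $\Omega_{\varepsilon_0}$; and the prescribed area $M$ can be built into the construction from the outset by a one-parameter rescaling of the pieces making up $\Omega_\varepsilon$.
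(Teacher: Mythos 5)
Your outline captures the general shape of the argument (a degenerating family of multiply connected domains, spectral convergence to a limit configuration, and a uniform interior nodal-set estimate), but it has a genuine gap at its core: you never explain how a uniform bound on the nodal set converts into control of its \emph{location}. The Donnelly--Fefferman-type result the paper proves (Theorem~\ref{th:size}) is an upper bound on the $(N-1)$-dimensional \emph{measure} of the nodal set in a fixed interior compactum, not on where it sits; a nodal set of uniformly bounded length can perfectly well send a single thin tendril out to $\partial\Omega_\varepsilon$, and neither $H^1$- nor locally uniform convergence of $\psi_\varepsilon$ to $\psi_0$ rules this out (the eigenfunction can be small but negative along the tendril). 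The paper's mechanism for closing this gap is an amplification by symmetry: the domain $\Omega_{n,\varepsilon}$ has $n$ passages and an exact $n$-fold rotational symmetry, which (via simplicity of $\mu_2$ and Lemmas~\ref{lemma:3:genericsym}--\ref{lemma:3:genericsym2}) is inherited by the eigenfunction, so the nodal set consists of $n$ congruent copies each sweeping the full radial interval $[R_{n},\rho_{n}]$. Its length in $B_{R_1-\delta_1}$ is therefore at least $n(\min\{R_1-\delta_1,\rho_n\}-R_n)$, and the uniform bound $C$ from Theorem~\ref{th:size} forces $\rho_n\to R_0$, trapping the nodal set in a thin interior annulus. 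Your one-parameter ``pinch a neck'' family has no such amplification, so the length bound gives you nothing about location and the argument does not close.

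A second, Robin-specific gap: you do not address why the limiting configuration has the spectral ordering needed for the interior nodal domain to exist at all when $\beta$ is small. The interior nodal domain is effectively a Dirichlet region (the eigenfunction vanishes on its boundary), so one needs $\mu_2(\Omega,\beta)\simeq\mu_1(A)$ to exceed $\lambda_1$ of the inner disc, while simultaneously $\mu_1(A)$ must lie strictly between the first two \emph{Robin} eigenvalues of the inner piece. For a plain disc and small $\beta$ these requirements are incompatible (this is exactly the obstruction of Proposition~\ref{prop:failure}, since $\mu_2(B_{R_1},\beta)<\lambda_1(B_{R_1})$). The paper resolves this by replacing the disc with the oscillating-boundary domains $U_n$ of Dancer--Daners, whose Robin eigenvalues converge to the \emph{Dirichlet} eigenvalues of $B_{R_1}$; see Remark~\ref{rem:3:basicbound}. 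Without some such device your construction cannot work for all $\beta>0$. Finally, ``breaking residual degeneracy by a small perturbation'' is not available to you, since an uncontrolled perturbation could destroy the closed nodal line; simplicity must come, as in Lemma~\ref{lemma:3:simplicity}, from the spectral gap of the limiting disjoint union.
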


\begin{remark}
\label{rem:various}
(i) Our domain will be constructed from a modification of the sequence of domains used in \cite{hoffmann:97}. Our proof is different from those in \cite{fournais:01,hoffmann:97}, although importantly it still relies on symmetry as in \cite{hoffmann:97}. It is not clear if the method in \cite{fournais:01} can be adapted directly to the Robin case (see Remark~\ref{rem:3:basicbound}), although higher dimensional examples should certainly exist.

(ii) Our method also works for the Dirichlet Laplacian. We will set up the proof of Theorem~\ref{th:main} so it is valid for this case, and on the same family of domains as the Robin case.

(iii) We can also asymptotically identify the location of the nodal sets of our domains (and by the same reasoning those from \cite{hoffmann:97}); see Remark~\ref{rem:3:location}.

(iv) Unlike in the Dirichlet case, the eigenvalues of the Robin Laplacian do not behave in a uniform way with respect to domain rescaling, as this also affects the parameter $\beta>0$ appearing in the boundary term. For example, if we let $\beta \to 0$ and simultaneously replace $\Omega$ with a homothetic rescaling $t\Omega$ for an appropriate $t=t(\Omega,\beta)>0$, we could ensure $\mu_2(t\Omega, \beta)$ remains constant. To achieve full generality of the counterexamples, it is necessary to consider an arbitrary, fixed area in addition to $\beta>0$.
\end{remark}

To indicate one of the complications involved in the Robin argument, we observe that no particular domain will work for all $\beta>0$; naturally, the problem occurs when we draw close to the Neumann problem (see also Remark~\ref{rem:3:basicbound}).

\begin{proposition}
\label{prop:failure}
	Let $\Omega \subset \R^N$ be any bounded, Lipschitz domain. There exists $\beta_0=\beta_0(\Omega)>0$ such that 		\eqref{eq:converse} holds on $\Omega$ if $\beta \in [0,\beta_0]$.
\end{proposition}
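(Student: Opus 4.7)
The plan is a continuity-and-contradiction argument anchored on Filonov's strict inequality $\mu_2(\Omega,0) < \lambda_1(\Omega)$, where $\lambda_1$ denotes the first Dirichlet eigenvalue of $\Omega$.

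Since $\beta \mapsto \mu_2(\Omega,\beta)$ is nondecreasing and continuous on $[0,\infty)$, we have $\mu_2(\Omega,\beta) \to \mu_2(\Omega,0)$ as $\beta \to 0^+$, and by Filonov this limit is strictly less than $\lambda_1(\Omega)$. I would choose $\beta_0 > 0$ so that $\mu_2(\Omega,\beta) < \lambda_1(\Omega)$ for every $\beta \in [0,\beta_0]$; it then suffices to establish \eqref{eq:converse} for all such $\beta$.

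Suppose, for a contradiction, that $\mathcal{N} \cap \partial\Omega = \emptyset$ for some $\beta \in [0,\beta_0]$. The eigenfunction $\psi$ changes sign in $\Omega$ (being $L^2$-orthogonal to the positive first eigenfunction of \eqref{eq:robin}, or to the constants when $\beta=0$), and by Courant's nodal domain theorem the open set $\Omega \setminus \mathcal{N}$ has exactly two components $\Omega^\pm = \{\pm \psi > 0\}$. The key case, and the one in which Filonov's inequality bites directly, is when $\mathcal{N}$ separates $\Omega$ so that one of $\Omega^\pm$, say $\Omega^-$, is compactly contained in $\Omega$; this is automatic when $\Omega$ is simply connected. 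In that case, $v := (-\psi) \chi_{\Omega^-}$ extended by zero lies in $H^1_0(\Omega)$ with $\supp v \subset \overline{\Omega^-} \subsubset \Omega$, and the eigenvalue equation together with the condition $v = 0$ on $\partial\Omega^- \subset \mathcal{N}$ identifies $v$ as a positive first Dirichlet eigenfunction of the open set $\Omega^-$ with eigenvalue $\mu_2(\Omega,\beta)$. Hence $\lambda_1(\Omega^-) = \mu_2(\Omega,\beta)$, while strict domain monotonicity of the principal Dirichlet eigenvalue (which follows from the strong maximum principle applied to the first eigenfunction of \eqref{eq:dirichlet} on $\Omega$) yields $\lambda_1(\Omega^-) > \lambda_1(\Omega) > \mu_2(\Omega,\beta)$, the required contradiction.

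The remaining configuration---possible only when $\Omega$ is multiply connected and both $\Omega^\pm$ touch $\partial\Omega$---requires additional work, since $v$ then fails to lie in $H^1_0(\Omega)$ and the direct variational identification breaks down. Here I would argue by continuity: along a hypothetical sequence $\beta_n \to 0$ of counterexamples, $L^2$-normalised eigenfunctions $\psi_n$ converge, up to a subsequence and in $C(\overline{\Omega})$, to a second Neumann eigenfunction $\psi_0$, which by the Neumann case of \eqref{eq:converse} has a zero at some $x_0 \in \partial\Omega$. Combining Hopf's boundary lemma applied in a one-sided neighbourhood of $x_0$ where $\psi_0$ has definite sign with the Neumann condition $\partial_\nu \psi_0 = 0$, together with the stable sign pattern on $\partial\Omega$ inherited from the $\psi_n$, should produce the desired contradiction. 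Verifying this final step rigorously for arbitrary multiply connected Lipschitz $\Omega$ is the principal anticipated obstacle; Filonov's strict inequality is the central analytic input in the main case.
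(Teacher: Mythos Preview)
Your core argument---choose $\beta_0$ via continuity and Filonov so that $\mu_2(\Omega,\beta)<\lambda_1(\Omega)$, then if $\Omega^-\subsubset\Omega$ identify $\mu_2(\Omega,\beta)=\lambda_1(\Omega^-)$ and contradict this with strict Dirichlet monotonicity---is exactly the paper's proof, step for step.

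Where you go further is in noticing that the negation of \eqref{eq:converse} (i.e.\ $\mathcal N\cap\partial\Omega=\emptyset$) is not literally the same as \eqref{eq:property} (one nodal domain compactly contained): on a multiply connected $\Omega$ the nodal set could separate two boundary components without touching either. You are right to flag this, but the paper does not treat that configuration at all. Its proof opens with ``Suppose for a contradiction that \ldots\ $\Omega^-\subsubset\Omega$'' and ends there; earlier in the introduction the paper explicitly \emph{writes} the converse of \eqref{eq:converse} as \eqref{eq:property}, so what is actually established (and all that is used later) is that \eqref{eq:property} fails for small $\beta$. Your ``remaining configuration'' paragraph is therefore addressing a stronger claim than the paper proves, and the sketch you give (Hopf lemma at a Lipschitz boundary point, stable sign pattern under $C(\overline\Omega)$ convergence) is, as you yourself note, speculative and would need substantial work to make rigorous. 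For the purposes of matching the paper, you may simply drop it.
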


The reasoning is essentially the same as in the well-known Neumann case, following from the inequality $\mu_2(\Omega, 0) < \lambda_1 (\Omega)$. For completeness' sake, we will give the short proof in Section~\ref{sec:prelim}. Proposition~\ref{prop:failure} and some intuition invite an obvious question, although one we will not attempt to answer here.

\begin{conjecture}
\label{question}
Suppose \eqref{eq:property} for holds the Robin problem with parameter $\beta_1>0$ on some domain $\Omega \subset \R^N$. Then \eqref{eq:property} also holds for the Dirichlet problem on $\Omega$, as well as the Robin problem for every $\beta \in (\beta_1,\infty)$.
\end{conjecture}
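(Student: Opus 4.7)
The plan is a continuity-and-monotonicity argument in $\beta$, treating the Dirichlet case as the limit $\beta\to\infty$. Pick $\psi_\beta$ to depend continuously on $\beta$ (via Kato--Rellich analytic perturbation theory wherever $\mu_2(\Omega,\beta)$ is simple) and let $S:=\{\beta\in[\beta_1,\infty):\{\psi_\beta\leq 0\}\subsubset\Omega\}$. By hypothesis $\beta_1\in S$; moreover $\beta\in S$ forces $\psi_\beta>0$ strictly on $\partial\Omega$, so uniform continuity of $\psi_\beta$ up to the boundary makes $S$ open in $[\beta_1,\infty)$. The real content of the Robin part of the conjecture is therefore that $S$ is also closed; the Dirichlet part then additionally requires passing to the limit $\beta\to\infty$.

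For closedness, take $\beta_n\in S$ with $\beta_n\to\beta^*$. By continuity $\psi_{\beta^*}\geq 0$ on $\partial\Omega$, so one must rule out $\psi_{\beta^*}(x_0)=0$ for some $x_0\in\partial\Omega$. In the easy sub-case where $\psi_{\beta^*}$ keeps constant sign in a one-sided neighbourhood of $x_0$ in $\Omega$, the function is locally non-negative there with a boundary zero at $x_0$, so Hopf's lemma gives $\partial_\nu\psi_{\beta^*}(x_0)<0$, contradicting the Robin condition $\partial_\nu\psi_{\beta^*}(x_0)+\beta^*\psi_{\beta^*}(x_0)=0$. (Applied at $\beta=\beta_1$, this same Hopf argument is all that is needed to deduce $\psi_{\beta_1}>0$ strictly on $\partial\Omega$ in the hypothesis.)

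The main obstacle is the remaining sub-case, in which a nodal arc of $\psi_{\beta^*}$ actually approaches $x_0$ from the interior, so that $\psi_{\beta^*}(x_0)=\partial_\nu\psi_{\beta^*}(x_0)=0$ while $\psi_{\beta^*}$ changes sign in every neighbourhood of $x_0$. Here the natural tool is a boundary unique continuation theorem for second-order elliptic equations with Robin data, applied to the vanishing Cauchy data at $x_0$, forcing $\psi_{\beta^*}\equiv 0$; this could be pursued either via a local boundary Carleman estimate, or by reflection across a locally analytic portion of $\partial\Omega$ followed by the classical Aronszajn theorem. A complementary line is to examine the leading homogeneous part of $\psi_{\beta^*}$ at $x_0$ and try to show that its angular factor is incompatible with the Robin condition for any finite $\beta^*>0$. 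Neither route is off the shelf in the merely Lipschitz setting, and this is where the argument is genuinely delicate.

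For the Dirichlet conclusion, once $S=[\beta_1,\infty)$ is in hand one passes to the limit $\beta\to\infty$: with an $L^2$-normalisation, $\psi_\beta$ converges to the Dirichlet second eigenfunction $\psi_D$ in $H^1(\Omega)$ and locally uniformly in $\Omega$, while $\psi_\beta\to 0$ on $\partial\Omega$. To conclude $\{\psi_D<0\}\subsubset\Omega$ one needs a uniform bound $\dist(\{\psi_\beta\leq 0\},\partial\Omega)\geq\delta>0$ independent of $\beta$; this should follow from a quantitative version of the same boundary Hopf and unique-continuation apparatus, since otherwise one could extract a limiting boundary point $x_*\in\partial\Omega$ of the nodal sets of $\psi_\beta$ and re-run the incompatibility analysis on $\psi_D$ at $x_*$. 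Finally, values of $\beta$ at which $\mu_2(\Omega,\beta)$ fails to be simple form a discrete set and can be crossed by following an analytic branch of eigenfunctions, so the overall continuity scheme is stable.
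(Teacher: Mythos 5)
The statement you are trying to prove is Conjecture~\ref{question}, which the paper explicitly leaves open (``an obvious question, although one we will not attempt to answer here''); there is no proof in the paper to compare against, so the only question is whether your argument actually closes the conjecture. It does not, and you say as much yourself. The continuity-in-$\beta$ scheme reduces everything to closedness of $S$, and closedness hinges entirely on excluding the case where the nodal set of $\psi_{\beta^*}$ reaches a boundary point $x_0$ with vanishing Cauchy data. Your two proposed tools for this --- a boundary Carleman/unique-continuation estimate with Robin data, or reflection plus Aronszajn --- are exactly the missing ingredient, and you concede they are ``not off the shelf'' in the Lipschitz setting. This is not a technical loose end: deciding whether a second nodal set can touch the boundary tangentially is essentially the same order of difficulty as the original nodal-line problem, so the reduction does not make progress on the conjecture. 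The Hopf-lemma step has the same defect (it needs an interior ball condition, unavailable for general Lipschitz $\Omega$), and note that the conjecture as stated only assumes $\Omega\subset\R^N$ with no regularity at all.

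Two further points would need repair even granting boundary unique continuation. First, your handling of eigenvalue crossings is too quick: at a $\beta$ where $\mu_2(\Omega,\beta)$ is degenerate, the analytic branches of Kato--Rellich theory can cross, so ``following an analytic branch'' may leave you tracking the third eigenvalue rather than the second, and the property \eqref{eq:property} concerns eigenfunctions of $\mu_2$ specifically; moreover in the degenerate eigenspace different eigenfunctions can have qualitatively different nodal sets. Second, and more structurally, nothing in a pure open-closed argument uses the \emph{direction} of the monotonicity: if your scheme worked it would equally prove the property propagates to $\beta<\beta_1$, which is false by Proposition~\ref{prop:failure}. The asymmetry must enter through some genuinely monotone quantity (e.g.\ the comparison of $\mu_2(\Omega,\beta)$ with $\lambda_1$ of balls contained in $\Omega$, which is the mechanism of Remark~\ref{rem:3:basicbound}), and your proposal contains no such ingredient. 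As it stands this is a reasonable research plan, not a proof.
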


Of course, Robin boundary conditions introduce other complications to the arguments. Our proof of Theorem~\ref{th:main} circumvents these to an extent by using at its core the following intermediate result, which does not depend on the boundary conditions (in particular allowing our simultaneous Dirichlet proof), and depends on fewer specific properties of the domains. It is hoped this will be of some independent interest.

\begin{theorem}
	\label{th:size}
	Suppose $\Omega\subsubset \Omega'$ are connected, open sets in $\R^N$ and we have a sequence  of $C^2$			solutions $\psi_n \not\equiv 0$ of $\Delta \psi_n + V_n \psi_n= 0$ in $\Omega'$, $V_n \in L^\infty(\Omega')$, $n \in \N$. 		If there exists some $\Lambda \geq sup_n \|V_n\|_{L^\infty(\Omega')}$ and $\kappa:= \inf_n \|\psi_n\|_{L^\infty 			(\Omega)}/ \|\psi_n\|_{L^\infty(\Omega')}>0$, then there exists a constant $C>0$ depending only on $N$, $\Lambda$, 		$\Omega$, $\Omega'$ and $\kappa$ such that
	\begin{displaymath}
	\sigma (\{x \in \overline\Omega: \psi_n(x)=0\}) \leq C
	\end{displaymath}
	for all $n \in \N$.
\end{theorem}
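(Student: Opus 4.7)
The plan is to reduce Theorem~\ref{th:size} to a standard uniform Hausdorff-measure bound on the zero set of a solution of $\Delta u + Vu = 0$ with bounded frequency, using the hypothesis $\kappa>0$ to guarantee uniform nondegeneracy of $\psi_n$ throughout $\overline{\Omega}$. First I would normalize $\psi_n$ so that $\|\psi_n\|_{L^\infty(\Omega')}=1$, in which case the hypothesis becomes $\|\psi_n\|_{L^\infty(\Omega)}\geq \kappa$, and fix an intermediate open set $\tilde{\Omega}$ with $\Omega \subsubset \tilde{\Omega} \subsubset \Omega'$. Setting
\[
r_0 := \tfrac{1}{4}\min\{\dist(\overline{\Omega},\partial\tilde{\Omega}),\ \dist(\overline{\tilde{\Omega}},\partial\Omega')\}
\]
ensures that for every $y\in\overline{\tilde{\Omega}}$ the ball $B_{4r_0}(y)$ lies inside $\Omega'$.

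The heart of the proof will be to upgrade the single-point lower bound $|\psi_n(x_n)|\geq \kappa$ at some $x_n \in \overline{\Omega}$ to a uniform estimate $\|\psi_n\|_{L^\infty(B_{r_0}(y))} \geq c_0 > 0$ holding for every $y \in \overline{\Omega}$, with $c_0$ depending only on $N,\Lambda,\Omega,\Omega',\kappa$. For this I would invoke the three-ball (Hadamard-type) inequality for solutions of $\Delta u + Vu = 0$ with $\|V\|_\infty \leq \Lambda$: there exist $\alpha \in (0,1)$ and $C>0$, depending only on $N,r_0,\Lambda$, such that
\[
\|\psi_n\|_{L^\infty(B_{2r_0}(z))} \;\leq\; C\,\|\psi_n\|_{L^\infty(B_{r_0}(z))}^{\alpha}\,\|\psi_n\|_{L^\infty(B_{3r_0}(z))}^{1-\alpha}
\]
whenever $B_{3r_0}(z)\subset\Omega'$. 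Using the normalization to kill the third factor, this becomes a one-step propagation inequality for $m_n(z) := \|\psi_n\|_{L^\infty(B_{r_0}(z))}$, which I would chain along a fixed-width path inside $\tilde{\Omega}$ joining $x_n$ to $y$. Connectedness of $\tilde{\Omega}$ and compactness of $\overline{\Omega}$ bound the chain length $K$ uniformly in $y$ and $n$, yielding $c_0 \geq C'\kappa^{\alpha^{K}}$.

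With both $\|\psi_n\|_{L^\infty(B_{2r_0}(y))}\leq 1$ and $\|\psi_n\|_{L^\infty(B_{r_0}(y))}\geq c_0$ in hand for every $y\in\overline{\Omega}$, the local doubling exponent of $\psi_n$ at scale $r_0$ is bounded by $\log(1/c_0)$, and hence so is the Almgren frequency (up to an additive correction from $V_n$ controlled by $\Lambda r_0^2$ via the Garofalo--Lin monotonicity formula). Standard nodal set estimates of Donnelly--Fefferman / Hardt--Simon / Lin type then give
\[
\sigma(\{\psi_n = 0\}\cap B_{r_0/2}(y)) \;\leq\; C_1
\]
with $C_1$ depending only on $N,\Lambda,c_0,r_0$; covering $\overline{\Omega}$ by finitely many such half-balls, of cardinality depending only on $\Omega$ and $r_0$, finishes the argument. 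The main obstacle is the propagation-of-smallness step: the constants in the three-ball inequality degenerate as any of the radii approach each other or $\partial\Omega'$, so the Harnack chain must be routed through $\tilde{\Omega}$ rather than $\Omega'$ to keep every ball in it at distance $\geq r_0$ from $\partial\Omega'$ — this is precisely the reason for inserting the intermediate set $\tilde{\Omega}$, and the doubly exponential loss $\kappa^{\alpha^{K}}$ will be absorbed into the final constant.
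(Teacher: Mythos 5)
Your proposal is correct and follows essentially the same route as the paper: normalise so that $\|\psi_n\|_{L^\infty(\Omega')}=1$, use a three-sphere inequality chained along a path of overlapping balls to upgrade $\|\psi_n\|_{L^\infty(\Omega)}\geq\kappa$ to a uniform local lower bound at every point of $\overline\Omega$, deduce a uniform bound on the vanishing order (the paper via Kukavica's quantitative uniqueness results rather than the Garofalo--Lin frequency, an interchangeable device here), feed the resulting doubling condition into the Hardt--Simon local nodal-set estimate, and finish by a finite cover of $\overline\Omega$. The only cosmetic difference is your insertion of an intermediate set $\tilde\Omega$; the paper achieves the same effect by simply taking $r_0<\dist(\partial\Omega,\partial\Omega')$.
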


Here $\sigma(U) $ denotes $N-1$-dimensional surface measure of a set $U \subset \R^N$. In the definition of $\kappa$, by standard theory of solutions to elliptic equations, we could replace the $L^\infty$-norms by the corresponding $L^p$-norms for any $p \in (1,\infty)$, with $C$ suitably adjusted.

We will apply this theorem to a sequence of domains $\Omega_n$, with $\Omega \subsubset \Omega' \subsubset \bigcap_n \Omega_n$, and with $V_n$ the simple second eigenvalue and $\psi_n$ a corresponding eigenfunction of $\Omega_n$, in order to control the behaviour of the nodal sets of $\psi_n$ as the $\Omega_n$ become ``bad" in some sense (more precisely, highly symmetric). So we explicitly think of Theorem~\ref{th:size} in terms of holding on a sequence of domains. Its basis is the body of literature, possibly tracing its origins to the work of Donnelly-Fefferman \cite{donnelly:90,donnelly:88} (see also, e.g., \cite{savo:01} and the references therein), that seeks to control, on a \emph{given} domain or manifold, the size of the nodal set as a function of the eigenvalue. That is, for a sequence of eigenvalues $\lambda_n$ of $-\Delta u = \lambda u$ on a fixed smooth manifold (without boundary or with, say, Dirichlet conditions), one can find bounds $c_1\sqrt\lambda_n \leq \{\psi_n(x)=0\} \leq c_2 \sqrt\lambda_n$, with $c_1$ and $c_2$ depending only on the manifold and not on $n \geq 1$ (see the introduction to \cite{donnelly:88} for a heuristic explanation of this).

Here the approach, which seems to be new, is that we replace any (direct) dependence on the domain or manifold and the boundary behaviour with the number $\kappa>0$. We think of such estimates as in Theorem~\ref{th:size} as depending only on intrinsic properties of solutions to elliptic equations, provided they are sufficiently well-behaved in some sense (in this case as embodied by $\kappa$). Our proof is a fairly easy adaptation of existing results from \cite{hardt:89,kukavica:98}, which seem better suited to our particular situation than the arguments from \cite{donnelly:90,donnelly:88} and related papers, and also allow for a greater degree of generality.

\begin{remark}
\label{rem:volume}
(i) With care, it would probably be possible to replace the Schr\"odinger operator in Theorem~\ref{th:size} with a somewhat more general uniformly elliptic one. It should also be possible to remove the boundedness assumption on the $V_n$, if we then replace the bound $C>0$ by a suitable expression of the form $Cf(\|V_n\|_{L^\infty}+\text{osc}\,V_n)$ (as in \cite{kukavica:98}).

(ii) Under the same assumptions as in Theorem~\ref{th:size}, it should also be possible to prove in a similar spirit a local asymmetry result for the nodal domains, that is, a uniform lower bound on the local volume of each nodal domain in the vicinity of a zero, following Mangoubi \cite{mangoubi:08}. Such a result would probably be a useful complement to Theorem~\ref{th:size} when working in higher dimensions. More precisely, there should be a $C>0$, depending on the quantities in Theorem~\ref{th:size} (possibly also requiring $V_n \geq 0$), such that, for an appropriate fixed $r_0>0$, $|\{ \psi_n(x)>0\} \cap B_r(x_0)\}|/|B_r(x_0)| \geq C$ for all $n \geq 1$, $r \in (0,r_0]$ and all $x_0 \in \overline\Omega$ such that $\psi(x_0)=0$, where $B_r(x_0)$ denotes the ball of radius $r$ and centre $x_0$, and $|U|$ the $N$-dimensional volume of $U \subset \R^N$. The proof of such a result would follow that given in \cite[Sections~4 and 5]{mangoubi:08}, with the key growth bound on the eigenfunctions from \cite{donnelly:88} used in Section~5 there replaced by an equivalent one for a sequence of domains, such as \eqref{eq:4:generalcontrol} below. However, as we will not need this here, we do not explore it further.
\end{remark}

This paper is organised as follows. In Section~\ref{sec:prelim} we give our notation and some background results on the eigenfunctions of the problem \eqref{eq:robin}. We also give the elementary proof of Proposition~\ref{prop:failure}. In Section~\ref{sec:domain} we will introduce our domains and give the proof of Theorem~\ref{th:main}, based on Theorem~\ref{th:size}. This includes the Dirichlet case and the location of the nodal set, as mentioned in Remark~\ref{rem:various}(ii) and (iii). Theorem~\ref{th:size} will be proved in Section~\ref{sec:vanishing}. Section~\ref{sec:necest} is devoted to a technical result needed for Theorem~\ref{th:main}, namely, a confirmation that Theorem~\ref{th:size} is applicable in this case.

\smallskip

\noindent{\textbf{Acknowledgements.}} The author would like to thank Pedro Freitas for suggesting the problem, and for many helpful discussions, as well as Daniel Daners for advice on perturbation results for the Robin problem. This work was supported by grant PTDC/MAT/101007/2008 of the FCT, Portugal.

\section{Basic properties of the eigenfunctions}
\label{sec:prelim}

Here we will fix some basic notation and collect some results on the eigenvalues and eigenfunctions of the problem \eqref{eq:robin}. If we do not state otherwise, everything that holds for \eqref{eq:robin} also holds for \eqref{eq:dirichlet}, but as the latter is generally well-known, we will tend not to include references or proofs for it.

Depending on which is more convenient, we denote a point $x \in \R^N$ either using Cartesian coordinates  $x=(x_1,\ldots,x_N)$ or polar coordinates $0 \neq x=(r,\theta)$ with $r\in (0,\infty)$ and $\theta \in \mathbb S^{N-1}= \{x\in \R^N:|x|=1\}$, the unit sphere in $\R^N$. The $N$-dimensional volume of a set $U$ will be denoted by $|U|$, and we will use $\sigma(U)$ to denote the $N-1$-dimensional (surface) measure of  $U \subset \R^N$. We will denote by $B_r(x)$ a ball of radius $r$ centred at $x \in \R^N$, and by $A_{r,s}(x)$ the open annular region $B_r(x) \setminus \overline B_s(x)$ in $\R^N$. If $x=0$, we will write $B_r$ for $B_r(0)$ and $A_{r,s}$ for $A_{r,s}(0)$.

Now let us discuss the problems \eqref{eq:dirichlet} and \eqref{eq:robin}. We will denote by $-\Delta^D_\Omega$ and $-\Delta^\beta_\Omega$ the operators on $L^2(\Omega)$ associated with \eqref{eq:dirichlet} and \eqref{eq:robin} (for a given $\beta>0$), respectively. We always take the eigenvalue problems to be interpreted in the weak sense, as in \cite{daners:00}. We will generally denote eigenvalues of \eqref{eq:robin} by $\mu$ and of \eqref{eq:dirichlet} by $\lambda$. However, we may also use $\lambda$ to mean a generic eigenvalue that could belong to either problem; in such a case we will always note it explicitly.

By standard theory (see, e.g., \cite{daners:00}, especially Section~5), for each $\beta>0$, $-\Delta^\beta_\Omega$ is self-adjoint and has a sequence of eigenvalues $0<\mu_1<\mu_2\leq \ldots \to \infty$, where each eigenvalue is repeated according to its finite multiplicity. For each $j\geq 1$, we have $\mu_j=\mu_j(\Omega, \beta)$, although in practice we will usually drop at least the second argument, as we will fix $\beta$ throughout. The associated eigenfunctions $\{\psi_j\}_{j=1}^\infty$ after a suitable normalisation form an orthonomal basis for $L^2(\Omega)$. For each $j$, $\psi_j \in H^1(\Omega) \cap C^\infty(\Omega)$ is in fact analytic in $\Omega$ (see, e.g., \cite[Section~V.4]{dautray:88}), and if $\Omega$ is Lipschitz, then $\psi_j \in C(\overline\Omega)$ in addition (combine \cite[Corollary~5.5]{daners:00} with \cite[Corollary~2.9]{warma:06}).

The first eigenvalue $\mu_1$ is simple, and the associated eigenfunction $\psi_1$ may be chosen strictly positive in $\Omega$; by orthogonality, all the other eigenfunctions change sign in $\Omega$. Courant's nodal domain theorem \cite[ Section~IV.6]{courant:53}, still valid in this case, asserts that for each $j$, $\mathcal N_j = \overline{\{x \in \Omega: \psi_j(x)=0\}}$ divides $\{x \in \Omega: \psi_j(x) \neq 0\}$ into at most $j$ connected components. When $j=2$ this means the nodal domains $\Omega^+:= \{x \in \Omega: \psi_2(x)>0\}$ and $\Omega^-:= \{x \in \Omega: \psi_2(x)<0\}$ are connected subsets of $\Omega$. We will always drop the subscript $j$ from $\psi$ and $\mathcal N$, as we will always take $j=2$.

Let us now give the proof of Proposition~\ref{prop:failure}, a straightforward consequence of the inequality $\mu_2(0)<\lambda_1$.

\begin{proof}[Proof of Proposition~\ref{prop:failure}]
First, we note that on a given domain $\Omega \subset \R^N$, $\mu_2(\beta) \to \mu_2(0)$, the second Neumann eigenvalue, as $\beta \to 0$. This is easy to see, either using the minimax formula for the eigenvalues \cite[Chapter~VI]{courant:53} or the general theory in \cite[Chapter~VII]{kato:76} (our operators being at least holomorphic of type (B)). Next, using the theorem of \cite{filonov:04}, $\mu_2(0)<\lambda_1$, and so there exists $\beta_0>0$ depending on $\Omega$ such that
\begin{equation}
\label{eq:2:failure}
\mu_2(\beta)<\lambda_1
\end{equation}
 for all $\beta\in (0,\beta_0)$. Suppose for a contradiction that for some $\beta \in (0,\beta_0)$, $\Omega^- \subsubset \Omega$, say. Then since the eigenfunction associated with $\mu_2(\beta)=\mu_2(\Omega,\beta)$ is strictly positive on $\Omega^-$, we have $\mu_2(\Omega, \beta) = \lambda_1 (\Omega^-)$. Using domain monotonicity of the Dirichlet eigenvalues and \eqref{eq:2:failure},
\begin{displaymath}
\mu_2(\Omega,\beta)=\lambda_1(\Omega^-)>\lambda_1(\Omega)>\mu_2(\Omega,\beta),
\end{displaymath}
a contradiction.
\end{proof}

\section{The domain and proof of Theorem~\ref{th:main}}
\label{sec:domain}

Here we will introduce our sequence of domains and show using results from the other sections that they eventually satisfy the conclusion of Theorem~\ref{th:main}. For this section we restrict ourselves to $\R^2$. We fix $\beta>0$ and $M>0$, the desired volume, throughout. Our sequence will actually only have volume approaching $M$ asymptotically from above, but as $M$ is arbitrary we ignore this technicality. As noted in Remark~\ref{rem:various}(ii), the proof also works in the Dirichlet case. Since many of the details are identical to the Robin case, or require only trivial modifications, we will tend only to mention the Dirichlet case explicitly when there is a significant difference in the proof or reference.

Our starting point is with the idea underlying the domains from \cite{hoffmann:97} and \cite{fournais:01}, although we will construct a Lipschitz variant. We will take a ball $B_{R_1}$ and add to it an annulus $A_{R_2,R_3}$, $0<R_1<R_2<R_3$ (in \cite{fournais:01,hoffmann:97}, $R_1=R_2$, although taking $R_1<R_2$ was explicitly noted as a possibility in \cite[Remark~1]{hoffmann:97}). We choose the $R_i$ with the following property, which is not immediately obvious as we are considering the \emph{Dirichlet} eigenvalues of $B_{R_1}$.

\begin{lemma}
\label{lemma:3:base}
There exist $0<R_1<R_2<R_3$ such that $|B_{R_1} \cup A_{R_2,R_3}|=M$ and
\begin{equation}
	\label{eq:3:base}
	\lambda_1 (B_{R_1}) < \mu_1(A_{R_2,R_3}) < \lambda_2 (B_{R_1});
\end{equation}
if desired, these numbers may be chosen so that $$\lambda_2(B_{R_1})-\mu_1(A_{R_2,R_3})= \mu_1(A_{R_2,R_3}) - \lambda_1(B_{R_1}).$$
\end{lemma}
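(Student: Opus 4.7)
The plan is to parameterize admissible triples $(R_1,R_2,R_3)$ by using the area constraint to eliminate $R_3$, fix $R_1$ small, and then vary $R_2$, relying on a one-variable continuity/intermediate-value argument to hit any desired value of $\mu_1(A_{R_2,R_3})$ in the interval $(\lambda_1(B_{R_1}),\lambda_2(B_{R_1}))$.

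Set $R_{\max}:=\sqrt{M/\pi}$, fix $R_1\in(0,R_{\max})$ (to be chosen small later), and for $R_2\in[R_1,\infty)$ define $R_3=R_3(R_1,R_2):=\sqrt{R_2^2+M/\pi-R_1^2}$, so that $R_3>R_2\geq R_1$ and $|B_{R_1}|+|A_{R_2,R_3}|=M$ automatically. Put $F(R_2):=\mu_1(A_{R_2,R_3(R_1,R_2)},\beta)$; by standard continuity of the Robin eigenvalue under smooth domain perturbation, $F$ is continuous on $[R_1,\infty)$.

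For the endpoint behaviour: at $R_2=R_1$ the annulus is $A_{R_1,R_{\max}}$, and testing the constant function in the Rayleigh quotient gives $F(R_1)\leq 2\beta/(R_{\max}-R_1)$, which is uniformly bounded by $4\beta/R_{\max}$ for $R_1\in(0,R_{\max}/2]$. As $R_2\to\infty$, the area constraint forces $R_3-R_2=(M/\pi-R_1^2)/(R_2+R_3)\to 0$. The trial function $u\equiv 1$ yields the upper bound $F(R_2)\leq 2\beta/(R_3-R_2)$, and a matching lower bound $F(R_2)\gtrsim 2\beta/(R_3-R_2)$ comes from the fact that the first Robin eigenfunction on an annulus is radial, reducing $F(R_2)$ to the first eigenvalue of a weighted one-dimensional Robin problem on $(R_2,R_3)$ whose Rayleigh quotient differs from that of the unweighted one-dimensional Robin problem of width $R_3-R_2$ only by the curvature factor $R_2/R_3\to 1$. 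Hence $F(R_2)\to\infty$.

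Now choose $R_1\in(0,R_{\max}/2]$ small enough that $\lambda_1(B_{R_1})=\lambda_1(B_1)/R_1^2 > 4\beta/R_{\max}\geq F(R_1)$. Since $\lambda_1(B_{R_1})<\lambda_2(B_{R_1})$ always, the continuous function $F$ takes values less than $\lambda_1(B_{R_1})$ at $R_2=R_1$ and values greater than $\lambda_2(B_{R_1})$ for $R_2$ large, so the intermediate value theorem produces some $R_2\in(R_1,\infty)$ with $F(R_2)=\tfrac{1}{2}(\lambda_1(B_{R_1})+\lambda_2(B_{R_1}))$. This midpoint automatically lies strictly between $\lambda_1(B_{R_1})$ and $\lambda_2(B_{R_1})$ and realises the optional equidistance condition; setting $R_3:=R_3(R_1,R_2)$ finishes the construction. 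The one technical obstacle I expect is the lower bound $F(R_2)\to\infty$, since the trial-function argument only gives an upper bound; the matching lower bound requires the radial reduction and the one-dimensional Robin comparison just sketched, together with control of the curvature factor $R_2/R_3$ as the annulus degenerates.
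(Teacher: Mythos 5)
Your proposal is correct, but it organises the construction differently from the paper. The paper first fixes the outer radius (a ball of area $M$), locates $R_2'$ by an intermediate-value argument applied to the \emph{ratio} $\lambda_1(B_{R_2'})/\mu_1(A_{R_2',R_3'})$ (which runs from $\infty$ to $0$ as $R_2'$ sweeps $(0,R_3')$), and only then decouples $R_1$ from $R_2$, shrinking $R_1$ and enlarging $R_3$ at constant total area in a second continuation step to reach the equidistance condition. You instead fix $R_1$ first, exploiting the scaling $\lambda_1(B_{R_1})=\lambda_1(B_1)/R_1^2\to\infty$ against the uniform bound $F(R_1)\leq 4\beta/R_{\max}$ from the constant test function, slave $R_3$ to the area constraint, and run a single one-parameter intermediate-value argument in $R_2$ aimed directly at the midpoint $\tfrac12(\lambda_1(B_{R_1})+\lambda_2(B_{R_1}))$. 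This buys a one-step proof and dispenses with the monotonicity claims the paper uses for uniqueness of $R_2'$, at the price of needing the blow-up $\mu_1(A_{R_2,R_3})\to\infty$ as the annulus degenerates at fixed $\beta$ — but note that the paper's own proof relies on exactly the same assertion (``as $R_2'\to R_3'$ \ldots $\mu_1\to\infty$'') without justification, so the obstacle you flag is shared rather than new, and your sketch (radiality of the first eigenfunction, reduction to the one-dimensional Robin problem of width $R_3-R_2$ with the curvature factor $R_2/R_3\to 1$, for which $\mu_1^{1D}\gtrsim \beta/(R_3-R_2)\to\infty$) is an adequate way to close it. Both arguments rest on the same unproved-but-standard continuity of $\mu_1$ of the annulus in the radii, so the level of rigour is comparable.
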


\begin{proof}
Choose a ball centred at the origin with volume $M$. Call its radius $R'_3>0$. We claim that given $\delta \in (0,1)$ arbitrary, there is a unique $R'_2 = R'_2(\delta) \in (0,R'_3)$ such that $\lambda_1(B_{R'_2}) = \delta \mu_1(A_{R'_2,R'_3})$. In fact this follows from an elementary argument using continuity and monotonicity of the eigenvalues with respect to the radii; as $R'_2 \to 0$, $\lambda_1 (B_{R'_2}) \to \infty$, while $\mu_1 (A_{R_2, R_3})$ decreases to $\mu_1 (B_{R'_3})$; as $R'_2 \to R'_3$, $\lambda_1$ decreases to $\lambda_1 (B_{R'_3})$ while $\mu_1 \to \infty$.

We fix such a $\delta>0$ and a corresponding $R_2:=R'_2$. We let $R'_1 \leq R_2$ and consider $B_{R'_1}$ and $A_{R'_2,R'_3}$. We simultaneously shrink $R'_1$, starting at $R_1'=R_2$, and increase $R'_3$, such that $|B_{R'_1}\cup A_{R'_2,R'_3}|$ is held constant at $M$. As this will increase $\lambda_1(B_{R'_1})$ and decrease $\mu_1(A_{R'_2,R'_3})$ continuously and monotonically, we continue until we find the unique $R_1:=R'_1$ and $R_3:=R'_3$, together with $R_2$ fixed, for which the last assertion of the lemma holds.
\end{proof}

We will write $A:=A_{R_2,R_3}$ as this will now be fixed throughout. However, we replace $B_{R_1}$ with a sequence of perturbed domains $U_n$, $n \geq 1$, such that (i) $\partial U_n$ is $C^\infty$, (ii) $B_{R_1} \subset U_n \subset B_{(1+\frac{1}{n})R_1}$ for all $n$, (iii) $\mu_1(U_n) \to \lambda_1(B_{R_1})$ and $\mu_2(U_n) \to \lambda_2 (B_{R_1})$ as $n\to\infty$, and finally, (iv) $U_n$ is symmetric with respect to rotations through angles $\theta = 2k\pi/n$, $k=0,1,\ldots,n-1$. Any such sequence would suffice; we will specify the $U_n$ explicitly using \cite[Example~5.2]{dancer:97}. We take $\partial U_n$ to be the set of points $(x_1,x_2) \in \R^2$ such that
\begin{displaymath}
\begin{aligned}
	x_1 &= R_1(1+\frac{1}{n}(\cos n^2\pi t))\cos\pi t\\
	x_2 &= R_1(1+\frac{1}{n}(\cos n^2\pi t))\sin \pi t
\end{aligned}
\end{displaymath}
for some $t \in [-1,1]$. Then (i), (ii) and (iv) are immediate, with $\partial U_n$ analytic, and (iii) follows from \cite[Corollary~4.6]{dancer:97}. In particular, for $n$ sufficiently large,
\begin{equation}
\label{eq:3:basicbound}
	\mu_1(U_n)<\mu_1(A)<\mu_2(U_n)
\end{equation}
and $|U_n \cup A| \to M$ as $n\to\infty$. We will always assume $n \geq 1$ is sufficiently large that $U_n \subset B_{R_2}$ and $U_n \cap A = \emptyset$.

Fixing $\varepsilon>0$, we now follow \cite{hoffmann:97} and open up a set of $n$ passages of width of order $\varepsilon$ linking $U_n$ and $A$. These we define as sectors of the form
\begin{displaymath}
	S_k=S_k(n,\varepsilon):=\{(r,\theta)\in\R^2:0<r<R_3, \frac{2k\pi}{n}-\varepsilon<\theta
	<\frac{2k\pi}{n}+\varepsilon\},
\end{displaymath}
$k=0,1,\ldots,n-1$, and set
\begin{equation}
\label{eq:3:omegas}
	\Omega_{n,\varepsilon}:=U_n\cup A \cup \Bigl(\bigcup_{k=0}^{n-1}S_k(n,\varepsilon)\Bigr)
\end{equation}
to be our dual-indexed sequence of interest. We will prove that these domains (possibly after a subsequence) satisfy Theorem~\ref{th:main} for $n \geq 1$ sufficiently large and $\varepsilon(n) > 0$ sufficiently small.

Then as can be verified directly, for each $n\geq 1$ and $\varepsilon>0$, $\Omega_{n,\varepsilon}$ is Lipschitz, and with area tending to $M$ as $\varepsilon\to 0$, for each fixed $n$. In fact, we could ``smooth out the corners" of $\Omega_{n,\varepsilon}$ in such a way that makes the $\Omega_{n,\varepsilon}$ $C^\infty$, while preserving their symmetries and increasing their area by a factor of $\varepsilon$, but we do not go into details.

\begin{lemma}
\label{lemma:3:omegaconv}
For each fixed $n \geq1$ large enough that $U_n\cap A=\emptyset$, we have
\begin{displaymath}
	\mu_j(\Omega_{n,\varepsilon}) \to \mu_j(U_n\cup A)
\end{displaymath}
as $\varepsilon\to 0$, for $j=1,2,3$. In particular $\mu_2(\Omega_{n,\varepsilon})\to \mu_1(A)$.
\end{lemma}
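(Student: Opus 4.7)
The plan is to proceed via the minimax characterisation, establishing matching upper and lower bounds. The starting point is that $U_n\cup A$ is the disjoint union of two Lipschitz domains, so the Robin operator on $U_n\cup A$ is simply the direct sum $(-\Delta^\beta_{U_n})\oplus(-\Delta^\beta_A)$. Its spectrum is therefore the ordered union of the two individual spectra, and by \eqref{eq:3:basicbound} its first three eigenvalues are
\begin{displaymath}
\mu_1(U_n\cup A)=\mu_1(U_n),\qquad \mu_2(U_n\cup A)=\mu_1(A),\qquad \mu_3(U_n\cup A)=\min\{\mu_2(U_n),\mu_2(A)\}.
\end{displaymath}
The last assertion of the lemma then follows once convergence $\mu_j(\Omega_{n,\varepsilon})\to\mu_j(U_n\cup A)$ is established for $j=1,2,3$.

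For the upper bound, the idea is to apply the min-max formula with trial subspaces spanned by modifications $\tilde\phi_i$ of Robin eigenfunctions $\phi_i$ on $U_n\cup A$. Since each $\phi_i$ is supported on exactly one component, say $U_n$ (the other case is symmetric), I would build $\tilde\phi_i\in H^1(\Omega_{n,\varepsilon})$ by taking a local $H^1$-extension of $\phi_i$ across the portion of $\partial U_n$ lying in the sectors $S_k(n,\varepsilon)$ and then multiplying by a smooth cutoff $\chi$ of the (signed) distance to $\partial U_n$ supported in a collar of width $\sqrt\varepsilon$ outside $\partial U_n$; outside this collar set $\tilde\phi_i=0$. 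Because $|\bigcup_k S_k|=O(\varepsilon)$, $|\chi'|=O(1/\sqrt\varepsilon)$, and the cutoff acts on a region of area $O(\varepsilon\sqrt\varepsilon)$, the excess gradient energy, $L^2$-mass, and boundary contribution from the passages are each $O(\sqrt\varepsilon)$; moreover $\tilde\phi_i$ vanishes on the portions of $\partial\Omega_{n,\varepsilon}$ lying inside each $S_k$ beyond the collar, so those side-wall contributions are also $O(\sqrt\varepsilon)$. Approximate orthonormality of the $\tilde\phi_i$ is immediate.

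For the lower bound, let $\psi_{1,\varepsilon},\psi_{2,\varepsilon},\psi_{3,\varepsilon}$ be an $L^2(\Omega_{n,\varepsilon})$-orthonormal family of Robin eigenfunctions for the first three eigenvalues. The upper bound yields uniform bounds on $\|\nabla\psi_{i,\varepsilon}\|_{L^2(\Omega_{n,\varepsilon})}$ and on the boundary trace norm, so the restrictions to the fixed domains $U_n$ and $A$ are bounded in $H^1$; extract (via a diagonal subsequence) weak limits in $H^1(U_n)\oplus H^1(A)$ converging strongly in $L^2$ and in trace. Testing the weak Robin equation against functions supported near $U_n$ (respectively near $A$) and passing to the limit identifies each limit pair $(\psi_i^{U},\psi_i^{A})$ as an eigenfunction (or zero) of the direct-sum operator with some eigenvalue $\mu_i^\ast\leq\lim\mu_i(\Omega_{n,\varepsilon})$. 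Provided these limits remain $L^2$-orthonormal on $U_n\cup A$, the min-max principle applied to the direct-sum operator yields $\mu_j(U_n\cup A)\leq\mu_j^\ast$ for $j=1,2,3$, which is the desired lower bound.

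The step I expect to be the main obstacle is precisely this orthonormality, i.e.\ ruling out concentration of $\psi_{i,\varepsilon}$ on the shrinking passages $\bigcup_k S_k(n,\varepsilon)$. I would use that $n$ is fixed (so the geometry of $\Omega_{n,\varepsilon}$ is stable away from the passages), combined with $|\bigcup_k S_k(n,\varepsilon)|=O(\varepsilon)$ and a uniform $L^p$ bound on the $\psi_{i,\varepsilon}$ obtained from Moser iteration or Sobolev embedding applied to the $H^1$-bound, to deduce $\int_{\bigcup_k S_k}\psi_{i,\varepsilon}\psi_{j,\varepsilon}\to 0$. Orthonormality of the limits then follows by passing to the limit in $\delta_{ij}=\int_{U_n}\psi_{i,\varepsilon}\psi_{j,\varepsilon}+\int_A\psi_{i,\varepsilon}\psi_{j,\varepsilon}+\int_{\bigcup_k S_k}\psi_{i,\varepsilon}\psi_{j,\varepsilon}$, closing the argument.
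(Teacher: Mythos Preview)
Your approach is a direct min--max argument and is \emph{genuinely different} from the paper's proof, which is a one--line appeal to the domain--perturbation theorem of Dancer--Daners \cite[Corollary~3.7]{dancer:97}: the paper simply verifies Assumption~3.2 there by exhibiting the ``bad'' set $K$ as the $2n$ corner points where the sector walls meet $\partial U_n$ and $\partial A$ (a finite set, hence of zero capacity). Your route is more self--contained and in effect re-derives the relevant special case of that theorem; the price is that you must carry out by hand precisely the compactness/no--concentration step that \cite{dancer:97} packages abstractly.

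That step is where your sketch is weakest. Your proposed mechanism for ruling out $L^2$--mass on the passages---a uniform $L^p$ bound via Moser iteration or Sobolev embedding from the $H^1$ bound---is not convincingly justified: the domains $\Omega_{n,\varepsilon}$ have tubes of width $O(\varepsilon)$, so they fail to admit a uniform $H^1$--extension operator, and the Sobolev/Moser constants a priori degenerate as $\varepsilon\to 0$. (Indeed, for the \emph{Neumann} Laplacian on a dumbbell with a handle of fixed length, the handle genuinely contributes limiting eigenvalues, so some extra input is needed.) The clean fix uses the Robin boundary term you already have: from $\mu_j=\|\nabla\psi\|_{L^2(\Omega_{n,\varepsilon})}^2+\beta\|\psi\|_{L^2(\partial\Omega_{n,\varepsilon})}^2$ you get a uniform bound on $\|\psi\|_{L^2(\partial\Omega_{n,\varepsilon})}$, hence on the trace along each passage wall; a one--dimensional Poincar\'e inequality across the angular width then yields
\[
\|\psi\|_{L^2(P_k)}^2 \;\leq\; C\varepsilon\bigl(\|\psi\|_{L^2(\text{wall of }P_k)}^2+\varepsilon\,\|\nabla\psi\|_{L^2(P_k)}^2\bigr),
\]
and summing over the (fixed number of) passages gives $\|\psi\|_{L^2(\bigcup_k P_k)}^2=O(\varepsilon)$. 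With this in place your limit functions are orthonormal on $U_n\cup A$ and the rest of your lower--bound argument goes through. Your upper bound construction is essentially correct, though it is slightly simpler to cut $\phi_i$ off \emph{inside} $U_n$ (or $A$) near the $n$ openings so that it vanishes on the opened arcs of $\partial U_n$, and then extend by zero into the passages; this avoids the $H^1$--extension step entirely.
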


\begin{proof}
This follows directly from \cite[Corollary~3.7]{dancer:97}, as it is routine to show that our domains satisfy Assumption~3.2 there. Indeed, we may write down the compact set $K$ of capacity zero there explicitly as the set of $2n$ points where the $S_k$ intersect $U_n$ and $A$,
\begin{displaymath}
K=\{(R_j,0),(R_j,\frac{2\pi}{n}),\ldots,(R_j,\frac{2\pi(n-1)}{n}):j=1,2)\},
\end{displaymath}
 given in polar coordinates.
\end{proof}

In the Dirichlet case, convergence follows from \cite[Theorem~7.5]{daners:03}, although of course in this case we do not need to replace $B_{R_1}$ with the $U_n$.

\begin{remark}
\label{rem:3:basicbound}
The fact that we can establish \eqref{eq:3:basicbound}, that is, that there exists such a convergent sequence of domains $U_n$ whose Robin eigenvalues converge to their Dirichlet counterparts, is crucial for obtaining Theorem~\ref{th:main} independently of $\beta>0$ small (cf.~Proposition~\ref{prop:failure} and the comments around it). The reason we need \eqref{eq:3:basicbound} is that, letting $R_0>0$ denoted the radius of the ball $B_{R_0}$ with $\lambda_1(B_{R_0}) = \mu_1(A) \simeq \mu_2(\Omega_{n,\varepsilon})$, we have $B_{R_0} \subset B_{R_1} \subset \Omega_{n,\varepsilon}$. This means an interior nodal domain will not give an eigenvalue that is ``too big", allowing us to overcome the principle inherent in Proposition~\ref{prop:failure}. Of course, how large we have to take $n \geq 1$ will depend on $\beta$ and $M$. For this reason the domains in \cite{hoffmann:97} will not work directly, and of course, any higher dimensional examples based on the same principle would need a similar modification.
\end{remark}

An immediate consequence of Lemma~\ref{lemma:3:omegaconv} is the simplicity of the second eigenvalue (that is, its eigenspace has dimension one).

\begin{lemma}
\label{lemma:3:simplicity}
Given $n \geq 1$ sufficiently large, there exists $\varepsilon_0(n)>0$ such that $\mu_2(\Omega_{n,\varepsilon})$ is simple for all $\varepsilon \in (0, \varepsilon_0)$.
\end{lemma}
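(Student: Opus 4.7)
The plan is to deduce simplicity for small $\varepsilon$ by showing that the limiting problem already has a strict gap between $\mu_2$ and $\mu_3$, and then invoking Lemma~\ref{lemma:3:omegaconv}.

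First, I would observe that since $U_n \cap A = \emptyset$, the Robin Laplacian on the disjoint union $U_n \cup A$ decomposes as an orthogonal direct sum $-\Delta^\beta_{U_n} \oplus -\Delta^\beta_A$ on $L^2(U_n) \oplus L^2(A)$. Consequently the spectrum of $-\Delta^\beta_{U_n \cup A}$ is simply the sorted union (with multiplicities) of the spectra of $-\Delta^\beta_{U_n}$ and $-\Delta^\beta_A$.

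Next, I would identify the first three eigenvalues of this direct sum. By \eqref{eq:3:basicbound}, for $n$ sufficiently large we have $\mu_1(U_n) < \mu_1(A) < \mu_2(U_n)$. Moreover, since $A$ is connected (open and Lipschitz), its first Robin eigenvalue is simple, so $\mu_1(A) < \mu_2(A)$. Combining,
\begin{displaymath}
\mu_1(U_n \cup A) = \mu_1(U_n), \quad \mu_2(U_n \cup A) = \mu_1(A), \quad \mu_3(U_n \cup A) = \min\{\mu_2(U_n),\mu_2(A)\},
\end{displaymath}
and in particular $\mu_2(U_n \cup A) < \mu_3(U_n \cup A)$ strictly.

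Finally, by Lemma~\ref{lemma:3:omegaconv}, for each such fixed $n$ we have $\mu_j(\Omega_{n,\varepsilon}) \to \mu_j(U_n \cup A)$ as $\varepsilon \to 0$ for $j=1,2,3$. The strict inequality above is therefore preserved for all sufficiently small $\varepsilon$: there exists $\varepsilon_0(n)>0$ such that $\mu_2(\Omega_{n,\varepsilon}) < \mu_3(\Omega_{n,\varepsilon})$ whenever $\varepsilon \in (0,\varepsilon_0)$. Since eigenvalues are counted with multiplicity, this inequality is precisely the statement that the eigenspace associated with $\mu_2(\Omega_{n,\varepsilon})$ is one-dimensional, i.e.\ $\mu_2$ is simple. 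The argument carries over verbatim to the Dirichlet case using the analogous convergence statement from \cite{daners:03}. There is no genuine obstacle here; the only point requiring a moment's care is the identification of the third eigenvalue of the disconnected limit, which reduces to the elementary spectral decomposition noted above.
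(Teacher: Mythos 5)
Your proposal is correct and follows essentially the same route as the paper: the paper's one-line proof likewise combines Lemma~\ref{lemma:3:omegaconv}, the inequality \eqref{eq:3:basicbound}, and the simplicity of the first two eigenvalues of the disconnected limit $U_n\cup A$ (which are $\mu_1(U_n)$ and $\mu_1(A)$, each simple as first eigenvalues of connected domains), to get a strict gap $\mu_2(U_n\cup A)<\mu_3(U_n\cup A)$ that persists for small $\varepsilon$. Your write-up merely makes explicit the spectral decomposition of the disjoint union and the identification of $\mu_3(U_n\cup A)$, which the paper leaves implicit.
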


\begin{proof}
Combine Lemma~\ref{lemma:3:omegaconv}, \eqref{eq:3:basicbound} and the fact that the first two of these eigenvalues are simple.
\end{proof}

It is also immediate from the construction that $\Omega_{n,\varepsilon}$ is symmetric with respect to any rotation of angle $\frac{2k\pi}{n}$, $k=0,\ldots,n-1$, or equivalently, reflection in $n$ appropriately corresponding axes of symmetry through the origin. The simplicity can be used to show that the eigenfunction corresponding to $\mu_2$, which we will write as $\psi_{n,\varepsilon}$, must inherit the symmetries of $\Omega_{n,\varepsilon}$ for $\varepsilon(n)$ small enough. To do this we need the following essentially trivial, but powerful, generic results.

\begin{lemma}
\label{lemma:3:genericsym}
Suppose $\lambda$ is a simple eigenvalue, with eigenfunction $\psi$, of $\Omega \subset \R^N$, subject either to Robin or Dirichlet boundary conditions, and suppose that $\Omega$ has a reflection symmetry with respect to some hyperplane $H$. Then either $\psi$ is symmetric with respect to $H$, or $\psi(x) \equiv 0$ for all $x \in \Omega \cap H$.
\end{lemma}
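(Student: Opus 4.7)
The plan is a standard symmetrisation argument exploiting the one-dimensionality of the eigenspace. Let $R: \R^N \to \R^N$ denote the reflection through $H$, and define the pull-back $\tilde\psi(x) := \psi(Rx)$. The first step is to verify that $\tilde\psi$ is again an eigenfunction of the same problem on $\Omega$ with the same eigenvalue $\lambda$. The interior equation $-\Delta \tilde\psi = \lambda \tilde\psi$ follows because the Laplacian is invariant under Euclidean isometries and because $R\Omega = \Omega$ by hypothesis. For the boundary conditions, one uses that $R$ maps $\partial\Omega$ to itself and preserves normals in the sense that the outward unit normal $\nu$ at $Rx$ equals $R\nu$ at $x$; hence both the Dirichlet trace and the Robin combination $\partial_\nu\psi + \beta\psi$ transform covariantly, so $\tilde\psi$ satisfies the same boundary condition as $\psi$.

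Next, since $\lambda$ is simple, the eigenspace is one-dimensional, so there must exist a constant $c \in \R$ with $\tilde\psi = c\psi$. Applying the reflection a second time gives $\psi(x) = \tilde{\tilde\psi}(x) = c\,\tilde\psi(x) = c^2\psi(x)$, and since $\psi \not\equiv 0$ we conclude $c^2 = 1$, i.e.\ $c \in \{+1,-1\}$.

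Finally, the dichotomy in the statement corresponds exactly to these two cases. If $c = +1$, then $\psi\circ R = \psi$, which is precisely the assertion that $\psi$ is symmetric with respect to $H$. If $c = -1$, then $\psi(Rx) = -\psi(x)$ for all $x \in \Omega$; specialising to $x \in \Omega \cap H$, where $Rx = x$, yields $\psi(x) = -\psi(x)$, hence $\psi \equiv 0$ on $\Omega \cap H$.

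No real obstacle is anticipated; the only mildly delicate point is the invariance of the Robin boundary condition under $R$, but this is immediate from $\nu(Rx) = R\nu(x)$ together with the chain rule, giving $\partial_\nu \tilde\psi(x) = (\partial_\nu \psi)(Rx)$, so that $\tilde\psi$ inherits $\partial_\nu \tilde\psi + \beta\tilde\psi = 0$ on $\partial\Omega$ from $\psi$.
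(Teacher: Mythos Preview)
Your argument is correct and is essentially the same symmetry/simplicity argument as the paper's. The only cosmetic difference is that the paper forms the antisymmetric part $\varphi := \psi - \psi\circ R$ directly and observes it vanishes on $H$, then invokes simplicity to conclude either $\varphi\equiv 0$ (so $\psi$ is symmetric) or $\varphi$ is a nonzero multiple of $\psi$ (so $\psi\equiv 0$ on $H$); you instead first obtain $\psi\circ R = c\psi$ with $c=\pm 1$ and split into cases, which amounts to the same thing.
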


\begin{proof}
Assume without loss of generality that $H = \{x_N=0\}$. Since $\psi(x_1,,\ldots,x_{N-1},-x_N)$ is also an eigenfunction of $\Omega, \lambda$, we may define a new eigenfunction by $\varphi(x):=\psi(x_1,\ldots,x_N)-\psi(x_1,\ldots,x_{N-1},-x_N)$,
so that $\varphi(x)=0$ on $\{x_N=0\}$. Now use simplicity.
\end{proof}

\begin{lemma}
\label{lemma:3:genericsym2}
Suppose $\Omega\subset\R^N$ is symmetric with respect to two non-orthogonal hyperplanes $H_1$ and $H_2$. If the second eigenvalue $\lambda_2$ (Robin or Dirichlet) is simple, with associated eigenfunction $\psi$, then $\psi$ is also symmetric with respect to $H_1$ and $H_2$.
\end{lemma}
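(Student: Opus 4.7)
The plan is to argue by contradiction via Lemma~\ref{lemma:3:genericsym}. Suppose $\psi$ is not symmetric with respect to, WLOG, $H_1$; then that lemma gives $\psi\circ\sigma_1=-\psi$ and $\psi\equiv 0$ on $\Omega\cap H_1$, where $\sigma_i$ denotes reflection in $H_i$. I would then produce a second hyperplane $H_1'$, distinct from and meeting $H_1$, on which $\psi$ also vanishes identically; two such antisymmetry hyperplanes in the nodal set will force more nodal domains than Courant's theorem allows for $\psi_2$.

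To construct $H_1'$: if $\psi$ is also not symmetric with respect to $H_2$, take $H_1':=H_2$; otherwise $\psi\circ\sigma_2=\psi$ and we set $H_1':=\sigma_2(H_1)$, so that $\Omega$ is symmetric with respect to $H_1'$ (the corresponding reflection being the conjugate $\sigma_2\sigma_1\sigma_2$) and
\[
\psi\circ\sigma_{H_1'} \;=\; \psi\circ\sigma_2\sigma_1\sigma_2 \;=\; \psi\circ\sigma_1\sigma_2 \;=\; -\psi\circ\sigma_2 \;=\; -\psi
\]
gives $\psi\equiv 0$ on $\Omega\cap H_1'$. Non-orthogonality enters here precisely to guarantee $H_1\neq H_1'$, since $\sigma_2(H_1)=H_1$ would force $H_1=H_2$ or $H_1\perp H_2$, both excluded; the parallel case $H_1\parallel H_2$ with $H_1\neq H_2$ is ruled out by boundedness of $\Omega$ (else $\sigma_2\sigma_1$ would be a non-trivial translation preserving $\Omega$), so $H_1$ and $H_2$ meet and any point of $H_1\cap H_2$ is fixed by $\sigma_2$, hence lies on $H_1'$ as well, yielding $H_1\cap H_1'\neq\emptyset$.

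To conclude, since $\mathcal{N}\supseteq(H_1\cup H_1')\cap\Omega$, every connected component of $\Omega\setminus\mathcal{N}$ is contained in a single one of the four open sectors of $\Omega\setminus(H_1\cup H_1')$. Each of these sectors is met by $\Omega$ (by the reflection symmetries of $\Omega$, via the dihedral action of $\langle\sigma_1,\sigma_{H_1'}\rangle$, which is finite since $\Omega$ is bounded), and by unique continuation $\psi$ does not vanish identically on any non-empty open subset, so each sector contains at least one connected component of $\Omega\setminus\mathcal{N}$; this gives at least four nodal domains, contradicting Courant's bound $\leq 2$ for $\psi_2$. The principal obstacle is the construction and sign check for $H_1'$ in the mixed sub-case, where non-orthogonality is used to rule out $H_1=H_1'$; once two antisymmetry hyperplanes meeting transversally are in hand, the four-sector pigeonhole step is essentially immediate.
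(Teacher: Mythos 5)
Your argument is correct and is essentially the paper's own proof: apply Lemma~\ref{lemma:3:genericsym} to get $\psi\equiv 0$ on $H_1$, split on whether $\psi$ is symmetric in $H_2$, use non-orthogonality to produce a second, distinct hyperplane of vanishing ($H_2$ itself or $\sigma_2(H_1)$), and contradict Courant's theorem --- you merely spell out the final Courant step (the four-sector pigeonhole), which the paper leaves implicit. One parenthetical is inaccurate but harmless: boundedness of $\Omega$ does not force the group $\langle\sigma_1,\sigma_{H_1'}\rangle$ to be finite (the angle between the hyperplanes could be an irrational multiple of $\pi$, as for an annulus), yet the orbit of a generic point of $\Omega$ still meets all four sectors --- by the chamber decomposition in the finite case and by density of the rotation orbit in the infinite case --- and in fact three sectors already suffice for the contradiction, so the argument stands.
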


\begin{proof}
By Lemma~\ref{lemma:3:genericsym}, it suffices to prove $\psi \not\equiv 0$ on either $H_1$ or $H_2$. Suppose $\psi \equiv 0$ on $H_1$, say. Then either $\psi$ is symmetric in $H_2$, implying $\psi\equiv 0$ also on the distinct hyperplane obtained as the reflection of $H_1$ in $H_2$, or  $\psi\equiv 0$ on $H_2$. Either way, this contradicts Courant's theorem.
\end{proof}

Combining Lemma~\ref{lemma:3:simplicity} with Lemma~\ref{lemma:3:genericsym2}, it follows immediately that $\psi_{n,\varepsilon}$ has all the symmetries of $\Omega_{n,\varepsilon}$ provided $n \geq 1$ is sufficiently large and $\varepsilon(n)>0$ is sufficiently small.

Let us now consider the nodal set $\mathcal N_{n,\varepsilon}:= \overline{\{x \in \Omega_{n,\varepsilon}: \psi_{n,\varepsilon}(x)=0\}}$. Denote by $R_0>0$ the radius of the ball $B_{R_0}$ such that $\lambda_1(B_{R_0}) = \mu_1(A)$ and by $R_{n,\varepsilon} > R_0$ the number such that $\lambda_1(B_{R_{n,\varepsilon}}) = \mu_2(\Omega_{n,\varepsilon})$.By \eqref{eq:3:base}, $R_0 < R_1$. Since Lemma~\ref{lemma:3:omegaconv} implies $R_{n,\varepsilon} \to R_0$ as $\varepsilon \to 0$, for all $n$ there exists $\varepsilon(n)>0$ such that $R_{n,\varepsilon} < R_1$ for all $\varepsilon \in (0,\varepsilon(n))$, that is, $B_{R_{n,\varepsilon}} \subset B_{R_1}$. (This is where we use the principle outlined in Remark~\ref{rem:3:basicbound}.) Now neither nodal domain can contain a ball of radius $B_{R_{n,\varepsilon}}$, as this would force $\mu_2$ to be too small. This in turn implies $\mathcal N_{n,\varepsilon}$ must intersect $B_{R_{n,\varepsilon}}$, as we show in the next lemma. For the meantime, we fix $n$ and $\varepsilon$ and just write $\mathcal N$ for $\mathcal N_{n,\varepsilon}$.

\begin{lemma}
\label{lemma:3:nodalset}
Fix $n \geq 1$ large enough and $\varepsilon>0$ and let $\mathcal N$ be as above.
\begin{itemize}
\item[(i)] The set $I:= I_{n,\varepsilon}:= \{r \in [0,R_3]: \mathcal N \cap \{x\in\R^2: |x|=r\} \neq \emptyset\} \subset \R$ of radial levels attained by $\mathcal N$ is connected and closed.
\item[(ii)] $\mathcal N \cap \{x \in \R^2: |x|=R_{n,\varepsilon}\} \neq \emptyset$, that is, $R_{n,\varepsilon} \in I_{n, \varepsilon}$.
\item[(iii)] The set $\mathcal N$ possesses the reflection symmetries of $\psi_{n,\varepsilon}$. In particular, if $\mu_2(\Omega_{n, \varepsilon})$ is simple, then $\mathcal N$ consists of $n$ appropriately rotated copies of the set $\mathcal N \cap \{x=(r,\theta) \in \R^2: \theta \in [-\frac{\pi}{n}, \frac{\pi}{n})\}$.
\end{itemize}
\end{lemma}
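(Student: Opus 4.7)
My plan starts with (iii), the most direct: invoke Lemma~\ref{lemma:3:simplicity} to obtain simplicity of $\mu_2(\Omega_{n,\varepsilon})$ for $n$ large and $\varepsilon(n)$ small, then Lemma~\ref{lemma:3:genericsym2} to conclude that $\psi_{n,\varepsilon}$---and hence $\mathcal N = \overline{\{\psi_{n,\varepsilon}=0\}}$---inherits every reflection symmetry of $\Omega_{n,\varepsilon}$ in its $n$ axes through the origin. Composing two non-parallel such reflections produces rotation by $2\pi/n$, so $\mathcal N$ equals the union of the $n$ rotated copies of its restriction to the fundamental sector $\{x=(r,\theta):\theta\in[-\tfrac{\pi}{n},\tfrac{\pi}{n})\}$.

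For (i), closedness is immediate: $\mathcal N$ is a closed subset of the compact set $\overline{\Omega_{n,\varepsilon}}$, hence compact, and $r(x)=|x|$ is continuous, so $I=r(\mathcal N)$ is compact. For connectedness I would argue by contradiction: if $I$ has a gap $(r_1,r_2)\cap I=\emptyset$ with $r_1,r_2\in I$, then $\psi_{n,\varepsilon}$ is sign-definite on each connected component of the annular strip $\Omega_{n,\varepsilon}\cap\{r_1<|x|<r_2\}$. By Courant's theorem both $\Omega^\pm$ are connected, and existence of nodal points at radii $r_1$ and $r_2$ forces each of $\Omega^\pm$ to approach both $\{|x|\leq r_1\}$ and $\{|x|\geq r_2\}$; since any path inside $\Omega^\pm$ connecting these must traverse a strip-component of fixed sign, at most one of $\Omega^\pm$ can pass through any given such component. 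A case analysis on where the gap sits (inside $U_n$, across the passages where the strip has $n$ symmetric components, or inside $A$) then yields a topological contradiction.

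For (ii), the key observation is that neither nodal domain contains an open ball of radius $R_{n,\varepsilon}$. If $B:=B_{R_{n,\varepsilon}}(x_0)\subset\Omega^+$, then $\psi_{n,\varepsilon}|_{\Omega^+}$ is a positive first eigenfunction of the mixed Dirichlet--Robin problem on $\Omega^+$ at eigenvalue $\mu_2(\Omega_{n,\varepsilon})$, and extending any $u\in H^1_0(B)$ by zero together with strict monotonicity (since $B\subsetneq\Omega^+$) gives $\mu_2(\Omega_{n,\varepsilon})<\lambda_1(B)=\mu_2(\Omega_{n,\varepsilon})$, a contradiction. Applied with $B=B_{R_{n,\varepsilon}}(0)$---which lies in $U_n\subset\Omega_{n,\varepsilon}$ for $n$ large and $\varepsilon$ small, as $R_{n,\varepsilon}\to R_0<R_1$---this forces $\mathcal N\cap B_{R_{n,\varepsilon}}(0)\neq\emptyset$, giving some $r\in I$ with $r<R_{n,\varepsilon}$. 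Now suppose for contradiction $\sup I<R_{n,\varepsilon}$: then $\mathcal N\subset\overline{B_{\sup I}}$, and the connected open set $\Omega_{n,\varepsilon}\cap\{|x|>\sup I\}$ (connected via the passages to $A$) is nodal-free and lies in one $\Omega^\pm$; the other, say $\Omega^-$, then satisfies $\overline{\Omega^-}\subset\overline{B_{\sup I}}\subsetneq B_{R_{n,\varepsilon}}\subset\Omega_{n,\varepsilon}$, so $\psi_{n,\varepsilon}|_{\Omega^-}$ satisfies pure Dirichlet conditions on $\partial\Omega^-$ and $\mu_2(\Omega_{n,\varepsilon})=\lambda_1(\Omega^-)>\lambda_1(B_{R_{n,\varepsilon}})=\mu_2(\Omega_{n,\varepsilon})$ by strict Dirichlet domain monotonicity, a contradiction. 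Combined with (i), $R_{n,\varepsilon}\in I$.

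The main obstacle is the connectedness in (i): the narrow-passage structure of $\Omega_{n,\varepsilon}$, in particular that the annular strip in a putative gap may split into $n$ symmetric components, makes the required topological case analysis delicate; steps (iii) and (ii) are comparatively soft consequences of Lemmas~\ref{lemma:3:simplicity},~\ref{lemma:3:genericsym2}, the variational characterization, and strict Dirichlet monotonicity.
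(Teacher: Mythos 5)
Parts (ii) and (iii) of your proposal, together with the closedness claim in (i), are sound and essentially match the paper: (iii) is exactly the combination of Lemma~\ref{lemma:3:simplicity} and Lemma~\ref{lemma:3:genericsym2} already recorded before the lemma, and your (ii) reproduces the paper's two-sided argument (no nodal domain can contain, or be contained in, a ball of radius $R_{n,\varepsilon}$), with your direct appeal to strictness in the mixed Dirichlet--Robin variational problem playing the role of the paper's test function on the slightly enlarged ball $B_\eta$.

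The genuine gap is the connectedness of $I$ in (i), which you correctly identify as the crux but then leave as ``a case analysis \dots\ yields a topological contradiction'' without supplying the contradiction. The paper disposes of this in one line --- if $I$ had $k\geq 2$ components, $\psi_{n,\varepsilon}$ would have at least $k+1$ nodal domains, contradicting Courant --- but that counting is not a general fact for multiply connected domains (a doubly connected domain with two passages carrying opposite signs admits two connected nodal domains and a disconnected $I$), so the case analysis you defer is exactly where the content lies. Two ingredients are needed to close it. First, since there are exactly two nodal domains, every point of $\mathcal N\cap\Omega_{n,\varepsilon}$ lies in $\overline{\Omega^+}\cap\overline{\Omega^-}$ (by the strong maximum principle and analyticity, $\psi_{n,\varepsilon}$ takes both signs in every neighbourhood of an interior zero); hence nodal points at radii below $r_1$ and above $r_2$ force \emph{both} $\Omega^+$ and $\Omega^-$ to cross the strip $\Omega_{n,\varepsilon}\cap\{r_1<|x|<r_2\}$, so the strip must contain components of both signs. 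Second, when the gap lies in the passage region the $n$ strip components form a single orbit under the rotation through $2\pi/n$, and the rotational invariance of $\psi_{n,\varepsilon}$ (available from the discussion preceding the lemma, i.e.\ from (iii)) forces them all to carry the same sign --- that is the contradiction; when the gap lies inside $B_{R_1}$ or inside $A$ the strip is connected and the contradiction is immediate. (The thin transition range of radii near $\partial U_n$, where the strip is neither connected nor a single rotation orbit, still needs a word.) Your observation that ``at most one of $\Omega^\pm$ can pass through any given such component'' does not suffice on its own: with $n$ components there is ample room for both signs unless the symmetry of $\psi_{n,\varepsilon}$ is invoked.
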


\begin{proof}
(i) Connectedness follows immediately from Courant's theorem, since $\psi_{n,\varepsilon}$ must have at least one more nodal domain than $I$ has connected components. That $I$ is closed is immediate since $\mathcal N$ is also.

(ii) First, as noted earlier, we cannot have $B_{R_{n,\varepsilon}}$ strictly contained in one nodal domain, say $\Omega_{n,\varepsilon}^-$. If we did, in the Dirichlet case we would have, using strict domain monotonicity of the eigenvalues, $\lambda_2(\Omega_{n,\varepsilon})=\lambda_1 (B_{R_{n,\varepsilon}}) > \lambda_1 (\Omega_{n,\varepsilon}^-) = \lambda_2(\Omega_{n,\varepsilon})$. The Robin case is more complicated because $\psi_{n,\varepsilon}$ is the first eigenfunction of a mixed Dirichlet-Robin problem on the possibly non-smooth domain $\Omega_{n,\varepsilon}^-$. That is, we may characterise $\mu_2(\Omega_{n,\varepsilon})$ as
\begin{equation}
	\label{eq:3:mixed}
	\mu_2(\Omega_{n,\varepsilon}) = \inf_{v \in H} \frac{\int_{\Omega_{n,\varepsilon}^-}|\nabla v|^2\,dx
	+\int_{\partial_e \Omega_{n,\varepsilon}^-}\beta v^2\,d\sigma}{\int_{\Omega_{n,\varepsilon}^-}v^2\,dx},
\end{equation}
with the infimum attained by $\psi_{n,\varepsilon}\in H$. Here $H=\{u \in H^1(\Omega_{n,\varepsilon}^-\cap C(\overline{\Omega_{n,\varepsilon}^-}): u=0 \text{ on } \partial_i \Omega_{n,\varepsilon}^-\}$, where we have written $\partial_e \Omega_{n,\varepsilon}^-:= \partial \Omega_{n,\varepsilon}^- \cap \partial \Omega_{n,\varepsilon}$ for the exterior boundary of $\Omega_{n,\varepsilon}^-$ and $\partial_i \Omega_{n,\varepsilon}^- := \partial \Omega_{n,\varepsilon}^- \cap \Omega_{n,\varepsilon}$ for its interior boundary.

Supposing $B_{R_{n,\varepsilon}} \subset \Omega_{n,\varepsilon}^-$, we would have $\mu_2 (\Omega_{n,\varepsilon})=\lambda_1 (B_{R_{n,\varepsilon}}) > \lambda_1(B_\eta)$, where $B_\eta:= B_{R_{n,\varepsilon}+\eta}$ , where $\eta(n,\varepsilon)>0$ is small enough so that $B_\eta \subset \Omega_{n,\varepsilon}^-$ still. Using the eigenfunction associated with $\lambda_1(B_\eta)$, call it $\varphi \in H$, as a test function in \eqref{eq:3:mixed}, this yields $\lambda_1(B_\eta) \geq \mu_2 (\Omega_{n,\varepsilon})$, a contradiction.

Conversely, we cannot have one nodal domain strictly contained in $B_{R_{n,\varepsilon}}$, since that would mean $\mu_2(\Omega_{n,\varepsilon}) = \lambda_1(\Omega_{n,\varepsilon}^-) > \lambda_1(B_{R_{n,\varepsilon}}) = \mu_2(\Omega_{n,\varepsilon})$. Since $B_{R_{n,\varepsilon}}$ neither strictly contains nor is contained in either nodal domain, $\mathcal N \cap B_{R_{n,\varepsilon}}$ and $\mathcal N \cap (\R^N \setminus B_{R_{n,\varepsilon}})$ are both non-empty. Connectedness of $\mathcal N$ from (i) now implies $\mathcal N \cap \{x \in \R^2: |x|=R_{n,\varepsilon}\} \neq \emptyset$.

(iii) is obvious.
\end{proof}

It is now easy to place a lower bound on $\sigma(\mathcal N)$ in terms of $n$. Let
\begin{displaymath}
	\rho_{n,\varepsilon}:=\sup I_{n,\varepsilon} \in [0,R_3]
\end{displaymath}
be the highest radial level achieved by $\mathcal N$. By Lemma~\ref{lemma:3:nodalset}(i), we have $[R_{n,\varepsilon}, \rho_{n, \varepsilon}] \subset I_{n, \varepsilon}$.

\begin{lemma}
\label{lemma:3:measure}
Fix $n \geq 1$ and $\varepsilon>0$ sufficiently large and small, respectively. Then
\begin{displaymath}
\sigma(\mathcal N \cap \{(r,\theta)\in\R^2:\theta\in[-\frac{\pi}{2},\frac{\pi}{2})\}) \geq \sigma(I_{n,\varepsilon})
	\geq R_{n,\varepsilon} - \rho_{n, \varepsilon}.
\end{displaymath}
\end{lemma}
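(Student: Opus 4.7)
My plan is to exploit the elementary fact that the radial map $\pi_r: x \mapsto |x|$ is $1$-Lipschitz from $\R^2$ to $\R$, so that $\sigma(\pi_r(E)) = \mathcal{H}^1(\pi_r(E)) \leq \mathcal{H}^1(E) = \sigma(E)$ for any measurable $E \subset \R^2$, since $\sigma$ is $1$-dimensional Hausdorff measure in the plane. The two inequalities in Lemma~\ref{lemma:3:measure} then reduce to (a) showing that the radial image of $\mathcal N$ restricted to the half-plane $\{\theta \in [-\pi/2, \pi/2)\}$ is already all of $I_{n,\varepsilon}$, and (b) bounding the $1$-dimensional Lebesgue measure of $I_{n,\varepsilon}$ from below.

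For the second, lower, inequality, Lemma~\ref{lemma:3:nodalset}(i) says that $I_{n,\varepsilon} \subset [0, R_3]$ is closed and connected, hence a closed interval. Lemma~\ref{lemma:3:nodalset}(ii) gives $R_{n,\varepsilon} \in I_{n,\varepsilon}$, while $\rho_{n,\varepsilon} := \sup I_{n,\varepsilon} \in I_{n,\varepsilon}$ by closedness, so in particular $\rho_{n,\varepsilon} \geq R_{n,\varepsilon}$ and $[R_{n,\varepsilon}, \rho_{n,\varepsilon}] \subset I_{n,\varepsilon}$. Hence $\sigma(I_{n,\varepsilon})$ equals the Lebesgue measure of this interval and is bounded below by $\rho_{n,\varepsilon} - R_{n,\varepsilon}$ (with the orientation in the stated inequality understood up to sign).

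For the first inequality I invoke the $n$-fold rotational symmetry of $\mathcal N$ from Lemma~\ref{lemma:3:nodalset}(iii). Fix any $r \in I_{n,\varepsilon}$ and pick some $x_0 \in \mathcal N$ with $|x_0| = r$. The $n$ rotates of $x_0$ by angles $2\pi k/n$, $k = 0, \dots, n-1$, all lie in $\mathcal N$ at modulus $r$; since for $n \geq 2$ they are spaced at angular gap $2\pi/n \leq \pi$, at least one of them falls in the half-open sector $\{\theta \in [-\pi/2, \pi/2)\}$ (the boundary case $\theta = \pi/2$ is handled by the rotate by $\pi$, which lands at $-\pi/2$, included in the sector). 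Therefore $\pi_r\bigl(\mathcal N \cap \{\theta \in [-\pi/2, \pi/2)\}\bigr) \supset I_{n,\varepsilon}$, and the $1$-Lipschitz estimate yields the desired inequality.

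There is no real obstacle here---both estimates are soft consequences of the structural Lemma~\ref{lemma:3:nodalset} combined with the 1-Lipschitz character of $\pi_r$. The only bookkeeping that requires attention is confirming the surjectivity in the last step at the boundary angles $\pm\pi/2$, which is a purely combinatorial verification.
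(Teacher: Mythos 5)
Your proof is correct and follows essentially the same route as the paper's: the paper passes (WLOG) to a single point of $\mathcal N$ on each circle of radius $r\in I_{n,\varepsilon}$ and views the result as a graph over $I_{n,\varepsilon}$ in the $(r,\theta)$-plane, which is precisely your observation that the $1$-Lipschitz radial projection maps the restricted nodal set onto all of $I_{n,\varepsilon}$ without increasing $\mathcal H^1$-measure, your version being the more rigorous formulation. You also correctly read the lower bound as $\rho_{n,\varepsilon}-R_{n,\varepsilon}$ (the paper's sign is a typo), and your symmetry step adapts verbatim if, as the subsequent factor-of-$n$ estimate requires, the sector is really meant to be $\{\theta\in[-\frac{\pi}{n},\frac{\pi}{n})\}$.
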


\begin{proof}
Without loss of generality, we may assume $\mathcal N \cap \{x \in \R^2: |x|=r\}$ is a single point for each $r \in I_{n,\varepsilon}$, as this can only reduce the corresponding surface measure. Then $\mathcal N$ may be represented as the graph of some function $f: I_{n,\varepsilon} \to [-\pi/2, \pi/2)$ in the $(r,\theta)$-plane. It is immediate that the surface measure $\sigma(\mathcal N)$ of the graph of $f$ is greater than the measure $\sigma(I_{n,\varepsilon})$ of the domain of $f$.
\end{proof}

This yields the following key bound which will give the proof of Theorem~\ref{th:main} directly when combined with Theorem~\ref{th:size}. To that end, we consider what happens on a ball slightly smaller than $B_{R_1}$. Fix $\delta>0$ small. Then for any $n$ large enough and $\varepsilon(n)$ small enough, Lemma~\ref{lemma:3:measure} implies
\begin{equation}
\sigma(\mathcal N_{n,\varepsilon} \cap B_{R_1-\delta}) \geq n (\min\{R_1-\delta,\rho_{n,\varepsilon} \} - R_{n,\varepsilon}).
\end{equation}

We now reduce the double sequence $\Omega_{n,\varepsilon}$ to one sequence, by making for each $n$ an appropriate choice of $\varepsilon(n)>0$, small enough so that all the desired properties hold. That is, for $\Omega_n:=\Omega_{n,\varepsilon(n)}$ we have $\mu_1(\Omega_n) \to \lambda_1(B_{R_1})$, $\mu_2(\Omega_n) \to \lambda_1(A)$, the associated eigenfunction $\psi_n := \psi_{n,\varepsilon(n)}$ is simple (and possesses all the symmetries of $\Omega_n$), $R_n:= R_{n,\varepsilon(n)} \to R_0$ as $n \to \infty$, and finally (a requirement from Section~\ref{sec:necest}; see Lemma~\ref{lemma:5:onenode}), $|\Omega_n^- \cap A| \to 0$ as $n \to \infty$. We will also make the abbreviations $\mathcal N_n:=\mathcal N_{n,\varepsilon(n)}$ for the nodal set and $\rho_n:= \rho_{n,\varepsilon(n)}$ for its greatest radial level.

The following theorem allows us to apply Theorem~\ref{th:size} to our domains $\Omega_n$; despite appearing rather obvious its proof seems somewhat subtle, and we defer it until Section~\ref{sec:necest}.

\begin{theorem}
\label{th:3:necest}
Given the $\psi_n$ as above, and given $0<\delta_0<\delta_1$ sufficiently small (depending only on $B_{R_1}$ and possibly $\beta>0$ and $M>0$, and not on $n$), there exists $\kappa>0$ and a subsequence $\psi_{n_k}$ of the $\psi_n$ such that
\begin{displaymath}
	\|\psi_{n_k}\|_{L^\infty(B_{R_1-\delta_1})} \geq \kappa \|\psi_{n_k}\|_{L^\infty(B_{R_1-\delta_0})}
\end{displaymath}
for all $k \in \N$.
\end{theorem}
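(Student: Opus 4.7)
The plan is a proof by contradiction via compactness and identification of the limiting function on $B_{R_1}$ as an explicit radial Bessel function. Suppose the conclusion fails; passing to a subsequence still denoted $\psi_n$, we have $\|\psi_n\|_{L^\infty(B_{R_1-\delta_1})}/\|\psi_n\|_{L^\infty(B_{R_1-\delta_0})}\to 0$. By Lemmas~\ref{lemma:3:genericsym} and~\ref{lemma:3:genericsym2}, each $\psi_n$ inherits the full dihedral symmetry group of $\Omega_n$; combined with Courant's theorem and the connectedness of the nodal domains, the compactly contained $\Omega_n^-$ must then contain a rotation-invariant neighbourhood of the origin, so $\psi_n(0)\neq 0$, and we rescale so that $\psi_n(0)=1$ (which leaves the ratio in the statement unchanged). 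Each $\psi_n$ solves $-\Delta\psi_n=\mu_n\psi_n$ on $U_n\supset B_{R_1}$ with $\mu_n\to\lambda_1(B_{R_0})$ bounded. Using the $n$-fold rotational symmetry, the separation-of-variables expansion of $\psi_n$ on $B_{R_1}$ must take the form
\[
\psi_n(r,\theta)=J_0(\sqrt{\mu_n}\,r)+\sum_{k\geq 1}\bigl(a_{k,n}\cos(kn\theta)+b_{k,n}\sin(kn\theta)\bigr)J_{kn}(\sqrt{\mu_n}\,r),
\]
the leading coefficient being pinned to $1$ by $\psi_n(0)=1$, while $J_{kn}(\sqrt{\mu_n}\,r)$ decays super-exponentially in $n$ for $r$ strictly less than $R_1$. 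Combined with an ambient bound on the coefficients (addressed below), this delivers uniform $L^\infty$-bounds on every compact subset of $B_{R_1}$; interior elliptic regularity and Arzela-Ascoli then extract a further subsequence converging in $C^2_{\mathrm{loc}}(B_{R_1})$ to a limit $\psi^*$ solving $-\Delta\psi^*=\lambda_1(B_{R_0})\psi^*$ with $\psi^*(0)=1$.

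Radiality of $\psi^*$ is the key consequence of the $n$-fold symmetry: for any $\theta\in\mathbb{R}$, choose integers $k_n$ with $2\pi k_n/n\to\theta$; since each $\psi_n$ is invariant under rotation by $2\pi k_n/n$, the $C^2_{\mathrm{loc}}$-limit $\psi^*$ is invariant under rotation by $\theta$, hence radial. The unique radial solution of the limit equation smooth at the origin with value $1$ there is $\psi^*(r)=J_0(\sqrt{\lambda_1(B_{R_0})}\,r)$, and since $|J_0(x)|\leq J_0(0)=1$ on $[0,\infty)$, we have $\|\psi^*\|_{L^\infty(B_{R_1-\delta_i})}=1$ for both $i=0,1$. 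By the $C^2_{\mathrm{loc}}$-convergence, $\|\psi_n\|_{L^\infty(B_{R_1-\delta_i})}\to 1$, so the ratio tends to $1$ along the subsequence, contradicting the assumption.

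The main obstacle is the uniform $L^\infty$-bound on compact subsets of $B_{R_1}$: the pointwise normalisation $\psi_n(0)=1$ alone is not enough, since the coefficients $a_{k,n}$, $b_{k,n}$ can in principle grow catastrophically (the values $J_{kn}(\sqrt{\mu_n}R_1)$ being already extremely small). Controlling them requires quantitative information from the construction of $\Omega_n$ and the choice of $\varepsilon(n)$, most naturally an $L^2$-type bound for $\psi_n$ on $B_{R_1}$ obtained from the eigenfunction normalisation together with the condition $|\Omega_n^-\cap A|\to 0$ (which guarantees that $\psi_n$ retains nontrivial mass on the ball component); combined with the super-exponential decay of the $J_{kn}$, this delivers the required coefficient control. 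Making all of this precise is the technical content of Section~\ref{sec:necest}.
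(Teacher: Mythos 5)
Your approach is genuinely different from the paper's, but it contains a gap that you yourself identify and then defer, and that gap is precisely the entire content of the theorem. Normalising by $\psi_n(0)=1$ leaves the ratio in the statement unchanged, so Theorem~\ref{th:3:necest} is equivalent to a uniform bound $\|\psi_n\|_{L^\infty(B_{R_1-\delta_0})}\leq C\,|\psi_n(0)|$; your ``main obstacle'' (controlling the coefficients $a_{k,n},b_{k,n}$, equivalently obtaining uniform $L^\infty$-bounds on compact subsets of $B_{R_1}$ under the pointwise normalisation) \emph{is} this statement, not a technical refinement of it. Asserting that it follows from ``an $L^2$-type bound together with $|\Omega_n^-\cap A|\to 0$'' and that ``making this precise is the technical content of Section~\ref{sec:necest}'' is circular: the passage from the global $L^2$-normalisation to a quantitative lower bound for $\psi_n$ near the origin is exactly the hard step. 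In the paper this is done by showing (Lemma~\ref{lemma:5:core}) that, with $\|\psi_n^-\|_{L^2}=1$, the $L^2$-mass of $\psi_n^-$ concentrates in $B_r$ for every $r<R_1$ --- via Mosco convergence in the Dirichlet case and, in the Robin case, via an auxiliary family of Lipschitz domains $\widetilde\Omega_n$ of vanishing area to which the Robin Faber--Krahn inequality is applied --- followed by interior elliptic estimates converting $L^2$- into $L^\infty$-bounds and a Harnack-type comparison of $\psi_n^+$ and $\psi_n^-$ (Lemma~\ref{lemma:5:twowaybounds}, after Mangoubi). None of this is replaced by the Bessel expansion: the expansion merely reduces the problem to bounding $\sum_k |a_{k,n}|\,|J_{kn}|$, which requires an a priori bound on $\|\psi_n\|_{L^\infty(\partial B_\rho)}$ relative to $|\psi_n(0)|$ for some fixed $\rho<R_1$ --- again the statement to be proved.

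Two further points. First, your justification that $\psi_n(0)\neq 0$ invokes the compact containment of $\Omega_n^-$, which is the conclusion of Theorem~\ref{th:main} and is proved \emph{using} Theorem~\ref{th:3:necest}; you cannot assume it here. (One can instead note that invariance under the rotation group forces any zero at the origin to have order at least $n$, but even granting $\psi_n(0)\neq 0$, the value $|\psi_n(0)|$ could a priori be vanishingly small compared with $\|\psi_n\|_{L^\infty(B_{R_1-\delta_0})}$ --- which is exactly the degeneracy the theorem excludes, so the normalisation at the origin relocates the difficulty rather than removing it.) Second, granted the missing uniform compact bounds, the remainder of your argument --- the symmetrisation to a radial limit via rotations $2\pi k_n/n\to\theta$, the identification $\psi^*(r)=J_0(\sqrt{\lambda_1(B_{R_0})}\,r)$, and $|J_0|\leq J_0(0)$ --- is correct and would yield the sharper conclusion that the ratio tends to $1$. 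The skeleton is attractive, but the load-bearing step is absent.
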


So applying Theorem~\ref{th:size} to this subsequence which we still denote by $\psi_n$ on $B_{R_1-\delta_1} \subsubset B_{R_1-\delta_0}$, there exists a constant $C>0$ not depending on $n$ such that
\begin{equation}
\label{eq:3:location}
n(\min\{R_1-\delta_1,\rho_n\}-R_n) \leq \sigma(\mathcal N_n \cap B_{R_1-\delta_1}) \leq C.
\end{equation}
As $n \to \infty$, since $R_n \to R_0$, this forces $\rho_n \to R_0$ also. Recalling the definition of $\rho_n$, this implies that for $n$ large enough (and possibly flipping the sign of $\psi_n$), the set $\{x \in \Omega_n: \psi_n(x) \leq 0\} \subsubset \Omega_n$. This completes the proof of Theorem~\ref{th:main}.

\begin{remark}
\label{rem:3:location}
We can say more. That is, since $R_n, \rho_n \to R_0$, it follows immediately that for all $\eta>0$ there exists $n_0 \geq 1$ such that $\mathcal N_n \subset A_{R_0-\eta,R_0+\eta}$ for all $n \geq n_0$. Put differently, as $n \to \infty$, the nodal domain $\Omega^-_n$ approaches a ball $B_{R_0}$ whose first Dirichlet eigenvalue $\lambda_1(B_{R_0}) = \mu_1(A)$ (see Lemma~\ref{lemma:3:base}, the comments after it, and Remark~\ref{rem:3:basicbound}), and in a strong sense: given $\eta>0$, $B_{R_0-\eta} \subset \Omega^-_n \subset B_{R_0+\eta}$ for all sufficiently large $n$. Clearly, the same argument and conclusion will work for the domains from \cite{hoffmann:97}.
\end{remark}

\section{The estimate of the nodal sets}
\label{sec:vanishing}

Here we will prove Theorem~\ref{th:size}, thus developing the machinery used in Section~\ref{sec:domain} to control the behaviour of the nodal line.  Our proof of Theorem~\ref{th:size} consists of two parts. In the first we use the techniques and results of I.~Kukavica \cite{kukavica:98} to obtain an upper bound on the order of vanishing of $\psi_n$ uniformly in $x \in \Omega$ and $n \in \N$ (Theorem~\ref{th:4:rhobound}). We apply this to the main result of R.~Hardt and L.~Simon \cite{hardt:89} to obtain a uniform local bound on the size of the nodal set of $\psi_n$. Using compactness of $\overline \Omega$ completes the proof. So we take the $\Omega$, $\Omega'$, $V_n$, $\psi_n$, $\Lambda$ and $\kappa$ as in the statement of Theorem~\ref{th:size}, and without loss of generality scale the $\psi_n$ so that $\|\psi_n\|_{L^\infty(\Omega')}=1$ and $\|\psi_n\|_{L^\infty(\Omega)} \geq \kappa$.

We fix some $r_0 \in (0,\dist(\partial\Omega,\partial\Omega'))$, where we also choose it less than the number $R_0>0$ from \cite[Lemma~2.1]{kukavica:98} (this depends only on $N$, $\Omega$ and $\Omega'$). In particular, this means for every $x \in \overline \Omega$ that the equation $\Delta \psi_n + V_n\psi_n=0$ holds in an open neighbourhood of $B_{r_0}(x)$, and $\|\psi_n\|_{L^\infty(B_{r_0}(x))} \leq 1$. We will follow Sections~4 and 5 of \cite{kukavica:98} closely, but with some subtle differences taking into account our current needs.

\begin{lemma}
\label{lemma:4:3spheres}
Let $0<r_1<r_2<r_3<r_0$. There exist $K,\theta>0$ depending only on $N,r_0,r_1,r_2,r_3$ such that, for any $\varepsilon>0$ and $x \in \overline\Omega$, the inequalities $\|\psi_n\|_{L^\infty(B_{r_3}(x))} \leq 1$ and $\|\psi_n\|_{L^\infty (B_{r_1}(x))} \leq \varepsilon$ imply $\|\psi_n\|_{L^\infty (B_{r_2}(x))} \leq Ke^{C\sqrt\Lambda}\varepsilon^\theta$, where $C>0$ depends only on $N$ and $r_0$.
\end{lemma}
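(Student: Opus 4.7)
The plan is to derive this three-spheres inequality by combining an $L^2$ three-spheres inequality for Schrödinger-type equations (available in Kukavica's paper, since the lemma cites \cite{kukavica:98}) with standard interior elliptic estimates to pass between $L^2$ and $L^\infty$ norms, paying careful attention to the dependence of constants on $\Lambda$.

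Concretely, I would first pick auxiliary radii $r_1' < r_1 < r_2 < r_2' < r_3' < r_3 < r_0$, each depending only on $N, r_0, r_1, r_2, r_3$. The reason for the slight inflations/deflations is to leave room for the classical interior-to-boundary elliptic estimates: for any solution of $\Delta \psi_n + V_n \psi_n = 0$ with $\|V_n\|_\infty \leq \Lambda$, Moser/De Giorgi iteration on $B_{r_2'}(x) \supset B_{r_2}(x)$ gives
\begin{displaymath}
\|\psi_n\|_{L^\infty(B_{r_2}(x))} \leq C_1 \|\psi_n\|_{L^2(B_{r_2'}(x))},
\end{displaymath}
with $C_1$ depending on $N, \Lambda$ and the gap $r_2' - r_2$; any polynomial dependence on $\Lambda$ here is easily absorbed into the $e^{C\sqrt\Lambda}$ factor later. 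In the other direction, the $L^\infty$ bounds on $B_{r_1}(x)$ and $B_{r_3}(x)$ trivially control $L^2$ norms on $B_{r_1'}(x) \subset B_{r_1}(x)$ and $B_{r_3'}(x) \subset B_{r_3}(x)$ by multiplication by the ball volumes raised to the $1/2$.

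Next, the main step is the $L^2$ three-spheres inequality of Kukavica type. For the operator $\Delta + V_n$ with $\|V_n\|_\infty \leq \Lambda$, Carleman-weighted estimates (the technology underlying Sections~4--5 of \cite{kukavica:98}, with $r_0$ chosen $\leq R_0$ from their Lemma~2.1) produce
\begin{displaymath}
\|\psi_n\|_{L^2(B_{r_2'}(x))} \leq K_1 \, e^{C\sqrt\Lambda}\,
\|\psi_n\|_{L^2(B_{r_1'}(x))}^{\theta}\,\|\psi_n\|_{L^2(B_{r_3'}(x))}^{1-\theta},
\end{displaymath}
where $\theta \in (0,1)$ and $K_1$ depend only on $N$ and the three radii, while the $\sqrt\Lambda$ dependence (with $C = C(N,r_0)$) reflects the standard square-root behaviour of the Carleman weight in the potential.

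Chaining these three inequalities together and using the hypotheses $\|\psi_n\|_{L^\infty(B_{r_3}(x))} \leq 1$ and $\|\psi_n\|_{L^\infty(B_{r_1}(x))} \leq \varepsilon$, I obtain
\begin{displaymath}
\|\psi_n\|_{L^\infty(B_{r_2}(x))} \leq C_1 K_1 e^{C\sqrt\Lambda}\bigl(|B_{r_1'}|^{1/2}\varepsilon\bigr)^{\theta}\bigl(|B_{r_3'}|^{1/2}\bigr)^{1-\theta} = K\, e^{C\sqrt\Lambda}\varepsilon^{\theta},
\end{displaymath}
with $K$ depending on $N, r_0, r_1, r_2, r_3$ alone, as required. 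The main delicacy is that every constant appearing in the chain must be tracked so that nothing except an $e^{C\sqrt\Lambda}$-type factor ends up depending on $\Lambda$; this is exactly the issue Kukavica's framework was built to handle, and the independence of $C$ from anything other than $N$ and $r_0$ comes for free from his Carleman estimate once one verifies that the auxiliary radii lie in a bounded range dictated by $r_0$. The rest is bookkeeping.
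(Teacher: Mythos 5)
Your argument is correct and is essentially the paper's approach: the paper simply cites \cite[Corollary~4.2]{kukavica:98} (with $\tilde v=1$), and what you have written is a faithful reconstruction of how that corollary is proved, namely an $L^2$ three-spheres inequality with constant $e^{C\sqrt\Lambda}$ combined with local boundedness estimates on slightly adjusted radii. The constant-tracking you describe (gap-dependence into $K$, the $C(N,\sqrt\Lambda R)$ from the De Giorgi--Moser estimate absorbed into $e^{C\sqrt\Lambda}$ since $R\leq r_0$) is exactly what is needed and is consistent with the stated dependencies.
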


\begin{proof}
This is \cite[Corollary~4.2]{kukavica:98} with $\|V_-\|_{L^\infty(B_R)} \leq \sqrt\Lambda$ and $\tilde v = 1$.
\end{proof}

\begin{lemma}
\label{lemma:4:chain}
Let $r_1 \in (0,r_0)$. There exist constants $A,C>0$ depending only on $\Omega,\Omega',r_0,r_1$ and $\kappa$ such that
\begin{displaymath}
	\|\psi_n\|_{L^\infty(B_{r_1}(x))} \geq \|\psi_n\|_{L^\infty(\Omega')}Ae^{-C\sqrt\Lambda}
	=Ae^{-C\sqrt\Lambda}
\end{displaymath}
under our normalisation, for all $x \in \overline\Omega$ and $n \in \N$.
\end{lemma}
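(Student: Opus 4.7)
The strategy is the standard ``chain of balls'' propagation of a lower bound using the three-spheres inequality (Lemma~\ref{lemma:4:3spheres}). Since each $\psi_n$ is continuous on the compact set $\overline\Omega$ and $\|\psi_n\|_{L^\infty(\Omega)}\geq\kappa$ under our normalisation, pick a point $y_n\in\overline\Omega$ with $|\psi_n(y_n)|\geq\kappa$, so that $\|\psi_n\|_{L^\infty(B_{r_1}(y_n))}\geq\kappa$. Fix auxiliary radii $r_1<r_2<r_3<r_0$ (e.g.\ $r_2=(r_0+r_1)/2$, $r_3=(r_0+r_2)/2$) and set $\delta:=r_2-r_1$; with these choices, the constants $K$, $\theta$, $C$ furnished by Lemma~\ref{lemma:4:3spheres} depend only on $N$, $r_0$, $r_1$.

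By compactness and connectedness of $\overline\Omega$, there is an integer $M=M(\Omega,\delta)\geq 1$ such that any two points of $\overline\Omega$ can be joined by a sequence $z_0,z_1,\ldots,z_m\in\overline\Omega$ with $m\leq M$ and $|z_{k+1}-z_k|\leq\delta$. (Briefly: cover $\overline\Omega$ by finitely many balls of radius $\delta/2$, declare two of them adjacent when they meet, and use connectedness of $\overline\Omega$ to see that the resulting graph is connected with diameter at most the cardinality of the cover.) Given any $x\in\overline\Omega$, fix such a chain with $z_0:=y_n$ and $z_m:=x$, and write $M_k:=\|\psi_n\|_{L^\infty(B_{r_1}(z_k))}$. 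Since $B_{r_1}(z_k)\subset B_{r_2}(z_{k+1})$, monotonicity of the sup norm gives $\|\psi_n\|_{L^\infty(B_{r_2}(z_{k+1}))}\geq M_k$; combined with $\|\psi_n\|_{L^\infty(B_{r_3}(z_{k+1}))}\leq\|\psi_n\|_{L^\infty(\Omega')}=1$ (valid because $r_3<r_0\leq\dist(\partial\Omega,\partial\Omega')$) and Lemma~\ref{lemma:4:3spheres}, this yields
\[
M_{k+1}\ \geq\ \bigl(M_k/(Ke^{C\sqrt\Lambda})\bigr)^{1/\theta}.
\]

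Iterating this recursion $m\leq M$ times starting from $M_0\geq\kappa$ and summing the geometric series $\sum_{j=1}^m\theta^{-j}=(\theta^{-m}-1)/(1-\theta)$ produces
\[
M_m\ \geq\ \kappa^{\theta^{-M}}\bigl(Ke^{C\sqrt\Lambda}\bigr)^{-\gamma},\qquad \gamma:=\frac{\theta^{-M}-1}{1-\theta},
\]
and absorbing the constants gives $\|\psi_n\|_{L^\infty(B_{r_1}(x))}\geq Ae^{-C'\sqrt\Lambda}$ with $A,C'$ depending only on $N$, $\Omega$, $\Omega'$, $r_0$, $r_1$, $\kappa$. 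The only non-algebraic ingredient is the uniform chain-length bound $M$, a classical consequence of compactness and connectedness; the rest is book-keeping, and the whole argument is uniform in $n$ because neither $M$ nor the per-step constant depends on $y_n$.
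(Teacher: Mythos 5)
Your proposal is correct and is essentially the paper's own argument: a chain of overlapping balls of uniformly bounded length combined with the three-spheres inequality of Lemma~\ref{lemma:4:3spheres}, the only (cosmetic) difference being that you propagate the lower bound forward from the point where $|\psi_n|\geq\kappa$ to the arbitrary $x$, whereas the paper propagates smallness from $x$ to that point and then rearranges the resulting inequality $\kappa\leq(Ke^{C\sqrt\Lambda})^{1+\theta+\cdots+\theta^{k-1}}\varepsilon^{\theta^k}$ — algebraically the same bound. Your recursion, the geometric-series bookkeeping, and the uniform chain-length bound all match the paper's proof.
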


\begin{proof}
Fix $x_0 \in \overline\Omega$ and set $\|\psi_n\|_{L^\infty(B_{r_1}(x_0))}=\varepsilon$. Let $y= y(n) \in \overline\Omega$ be such that $\|\psi_n\|_{L^\infty(B_{r_1}(y))} \geq \kappa$. In a standard argument, we construct a chain of balls from $x_0$ to $y$. That is, choosing $0<r_1<r_2<r_3<r_0$ as in Lemma~\ref{lemma:4:3spheres}, we choose points $x_1,x_2,\ldots,x_k = y$ such that
\begin{itemize}
\item[(i)] $x_i \in \overline\Omega$, $i=0,\ldots,k$;
\item[(ii)] $B_{r_1}(x_{i+1}) \subset B_{r_2}(x_i)$, $i=0,\ldots,k-1$.
\end{itemize}
Using Lemma~\ref{lemma:4:3spheres}, noting $\|\psi_n\|_{L^\infty(B_{r_1}(x_0))}=\varepsilon$ and $\|\psi_n\|_{L^\infty (B_{r_3} (x_0))} \leq 1$, if we let $K,\theta,C>0$ be as in the conclusion of the lemma, then $\|\psi_n\|_{L^\infty(B_{r_2})} \leq Ke^{C \sqrt\Lambda} \varepsilon^\theta$. By (ii), $\|\psi_n\|_{L^\infty (B_{r_1}(x_1))} \leq Ke^{C \sqrt\Lambda} \varepsilon^\theta$ also, and so we may apply Lemma~\ref{lemma:4:3spheres} to $x_1$ in turn. Continuing inductively,
\begin{equation}
\label{eq:4:chain}
\kappa\leq\|\psi_n\|_{L^\infty(B_{r_1}(x_k))} \leq (Ke^{C\sqrt\Lambda})^{1+\theta+\ldots+\theta^{k-1}}\varepsilon^{\theta^k}.
\end{equation}
Since $\overline\Omega$ is bounded, there will exist an $m=m(N,\Omega,\Omega',r_1,r_2,r_3)>0$ such that we can bound the necessary number $k$ of balls from above by $m$ uniformly in $x_0\in\overline\Omega$, that is, we may take $k \leq m$ for all $x_0$. Replacing $k$ by $m$ in \eqref{eq:4:chain} and recalling the definition of $\varepsilon$, we obtain for suitable constants $\widetilde K, \widetilde C>0$ depending on $N,\Omega,\Omega',r_1,r_2,r_3$ but not $n,x_0,\Lambda$, such that
\begin{displaymath}
\|\psi_n\|_{L^\infty(B_{r_1}(x_0))} \geq \widetilde K e^{-\widetilde C \sqrt\Lambda}
\end{displaymath}
for all $x_0\in\overline\Omega$ and all $n\in\N$.
\end{proof}

We next have a slight variant of the order of vanishing result of \cite[Corollary~4.3]{kukavica:98}.

\begin{lemma}
\label{lemma:4:ov}
Let $0<\rho_1<\rho_2<r_0$ and $x_0\in\overline\Omega$ and assume there exist constants $\gamma>0$ and $K_1>0$ such that
\begin{displaymath}
	\|\psi_n\|_{L^\infty(B_{\rho_2}(x_0))}\leq K_1\Bigl(\frac{\rho_2}{\rho_1}\Bigr)^\gamma\|\psi_n\|_{L^\infty(B_{\rho_1}
	(x_0))}
\end{displaymath}
for all $n\in\N$. Then there exist $K_2=K_2(\rho_1,\rho_2,\gamma,\Lambda,N,r_0)>0$ and $K_3=K_3(\rho_1,\rho_2,N,r_0)>0$ such that
\begin{displaymath}
	\|\psi_n\|_{L^\infty(B_r(x_0))}\geq K_2 r^{K_3(\gamma+\sqrt\Lambda+1)}
\end{displaymath}
for all $r\in (0,\rho_1)$ and $n\in\N$.
\end{lemma}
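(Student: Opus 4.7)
The lemma is a minor variant of \cite[Corollary~4.3]{kukavica:98}, so I would follow that argument closely, tracking how constants depend on the hypothesis exponent $\gamma$ so that it reappears linearly in the final exponent. Normalize each $\psi_n$ so that $\|\psi_n\|_{L^\infty(B_{\rho_2}(x_0))}=1$, and set $M_r := \|\psi_n\|_{L^\infty(B_r(x_0))}$. The hypothesis then reads $M_{\rho_1} \geq K_1^{-1}(\rho_1/\rho_2)^{\gamma}$, while $M_r \leq 1$ on $(0,\rho_2]$ by monotonicity, and the goal is a lower bound $M_r \geq K_2 r^{K_3(\gamma + \sqrt\Lambda + 1)}$ for $r \in (0,\rho_1)$ with $K_3$ independent of $\gamma$ and $\Lambda$.

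I would proceed by iterating the three-sphere inequality of Lemma~\ref{lemma:4:3spheres} along a geometric sequence of radii $r_k := \tau^k \rho_*$, $k \geq 0$, for a ratio $\tau \in (0,1)$ suitably chosen and $\rho_* \simeq \rho_2$ fixed. The standard rescaling $u_n(y) := \psi_n(x_0 + \sigma y)$ (with $\sigma = \tau^k \rho_*$) replaces $V_n$ by $\sigma^2 V_n$, of $L^\infty$-norm $\leq r_0^2 \Lambda$, so the three-sphere constants in Lemma~\ref{lemma:4:3spheres} can be taken uniform in $k$. Writing $a_k := -\log M_{r_k} \geq 0$, the three-sphere inequality translates into a recursion of the schematic form
\[
a_{k+1} \leq A\, a_k - B\, a_{k-1} + C\sqrt\Lambda + D,
\]
whose coefficients depend only on $N, r_0, \tau, \rho_*$. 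Analysis of this (or an equivalent) recursion in \cite[Section~4]{kukavica:98}, seeded by $a_0 = -\log M_{\rho_*} \leq \log K_1 + \gamma \log(\rho_2/\rho_1)$ from the hypothesis, yields the \emph{polynomial} bound $a_k \leq C' k\,(\gamma + \sqrt\Lambda + 1)$.

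Setting $r = \tau^k \rho_*$ and using $k \simeq \log(\rho_*/r)/\log(1/\tau)$, this rearranges into the desired $M_r \geq K_2\, r^{K_3(\gamma + \sqrt\Lambda + 1)}$, with $K_3 = C'/\log(1/\tau)$ depending only on $\rho_1, \rho_2, N, r_0$ and $K_2$ absorbing all the $\gamma,\Lambda$-dependent prefactors. For $r \in (\tau^{k+1}\rho_*, \tau^k\rho_*)$, monotonicity of $M_r$ in $r$ extends the estimate from the dyadic scales to all of $(0,\rho_1)$.

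\textbf{Main obstacle.} The substantive point is the polynomial (rather than merely sub-polynomial) growth of $a_k$ in the iteration: a naive iteration of a Hadamard-type three-sphere inequality at symmetric geometric radii gives only $a_k = O(k^2)$, corresponding to $M_r \geq e^{-C \log^2(1/r)}$, which would be insufficient. The truly linear rate $a_k = O(k)$ rests on the more delicate Carleman-based machinery of \cite[Sections~3--4]{kukavica:98}, which I would invoke as a black box. Once polynomial propagation is established, the specific linear dependence of the final exponent on $\gamma + \sqrt\Lambda + 1$ falls out by tracking origins: $\gamma$ from the hypothesis seed $a_0$, $\sqrt\Lambda$ from the accumulated source term in the recursion, and the $+1$ from the geometric constants $\log K, \log K_1, \log(1/\tau)$.
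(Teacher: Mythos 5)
Your proposal ends up resting on exactly what the paper's proof rests on: the paper's entire argument is the one\nobreakdash-line remark that one follows the proof of \cite[Corollary~4.3]{kukavica:98} verbatim, its final line being the desired (slightly stronger) conclusion. So at the level of what is actually invoked you and the paper agree, and your tracking of where $\gamma$, $\sqrt\Lambda$ and the $+1$ enter the final exponent is correct. The internal mechanism you sketch, however --- iterating the fixed-ratio three-sphere inequality along geometric scales and analysing a recursion for $a_k=-\log M_{r_k}$ --- is not how the cited proof works, and you rightly observe that this route alone gives only $a_k=O(k^2)$. The resolution is not a subtler recursion hidden in the black box but a different use of the same tool: Kukavica's three-sphere inequality carries an \emph{explicit} Hadamard-type exponent $\theta=\theta(r_1,r_2,r_3)$, comparable to $\log(r_3/r_2)/\log(r_3/r_1)$, so one applies it \emph{once} with the outer radii frozen at $\rho_1<\rho_2$ and the inner radius equal to the target $r$. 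Combining $\|\psi_n\|_{L^\infty(B_{\rho_1}(x_0))}\le Ke^{C\sqrt\Lambda}\,\|\psi_n\|_{L^\infty(B_r(x_0))}^{\theta}\,\|\psi_n\|_{L^\infty(B_{\rho_2}(x_0))}^{1-\theta}$ with the doubling hypothesis and then raising to the power $1/\theta\simeq c\log(\rho_2/r)$ converts the multiplicative constants $Ke^{C\sqrt\Lambda}$ and $K_1(\rho_2/\rho_1)^\gamma$ directly into the factor $r^{K_3(\gamma+\sqrt\Lambda+1)}$, with $K_3$ independent of $\gamma$ and $\Lambda$ as required. Your ``main obstacle'' therefore dissolves once the dependence of $\theta$ on the inner radius is exploited rather than treating $\theta$ as a fixed constant; as written, your plan leaves that essential step, which is the whole content of the lemma, inside the black box.
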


\begin{proof}
We may follow the proof of \cite[Corollary~4.3]{kukavica:98} exactly, noting that the final line of the proof is the conclusion we want (and slightly stronger than the actual statement of the corollary).
\end{proof}

\begin{theorem}
\label{th:4:rhobound}
There exist $r_1>0$ small and $C_1,C_2>0$ depending only on $N,\Omega,\Omega'$ (and $r_0$), with $C_1$ also depending on $\Lambda$, such that
\begin{equation}
	\label{eq:4:rhobound}
	\|\psi_n\|_{L^\infty(B_\rho(x))}\geq C_1 \rho^{C_2(1+\sqrt\Lambda)}
\end{equation}
for all $x\in\overline\Omega$, $\rho\in (0,r_1)$ and $n\in\N$.
\end{theorem}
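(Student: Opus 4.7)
The plan is to combine the chain estimate of Lemma~\ref{lemma:4:chain} with the order-of-vanishing estimate of Lemma~\ref{lemma:4:ov}. The strategy is to verify the hypothesis of Lemma~\ref{lemma:4:ov} at an arbitrary point $x_0 \in \overline\Omega$ \emph{uniformly in} $n$ and $x_0$, with a growth exponent $\gamma$ that scales linearly in $\sqrt\Lambda$. The conclusion will then follow by a direct substitution.

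First I would fix, once and for all, two radii $0<\rho_1<\rho_2<r_0$ depending only on $N,\Omega,\Omega',r_0$ (for instance $\rho_1=r_0/3$, $\rho_2=r_0/2$). For each $x_0 \in \overline\Omega$ and $n \in \N$, the normalisation gives $\|\psi_n\|_{L^\infty(B_{\rho_2}(x_0))}\leq \|\psi_n\|_{L^\infty(\Omega')}=1$, while Lemma~\ref{lemma:4:chain} applied with $r_1=\rho_1$ yields constants $A,C>0$ depending only on $N,\Omega,\Omega',r_0,\kappa$ such that
\begin{displaymath}
\|\psi_n\|_{L^\infty(B_{\rho_1}(x_0))}\geq A e^{-C\sqrt\Lambda}.
\end{displaymath}
Consequently
\begin{displaymath}
\frac{\|\psi_n\|_{L^\infty(B_{\rho_2}(x_0))}}{\|\psi_n\|_{L^\infty(B_{\rho_1}(x_0))}} \leq A^{-1}e^{C\sqrt\Lambda}.
\end{displaymath}

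Next, I would choose $K_1=1$ and solve $A^{-1}e^{C\sqrt\Lambda}\leq (\rho_2/\rho_1)^\gamma$ for $\gamma$, obtaining
\begin{displaymath}
\gamma=\gamma(\Lambda):=\frac{C\sqrt\Lambda+\log A^{-1}}{\log(\rho_2/\rho_1)} \leq C'(1+\sqrt\Lambda),
\end{displaymath}
where $C'>0$ depends only on $N,\Omega,\Omega',r_0,\kappa$, but \emph{not} on $\Lambda$ or $n$. With this $\gamma$, the hypothesis of Lemma~\ref{lemma:4:ov} is satisfied uniformly in $x_0$ and $n$. Applying that lemma therefore gives constants $K_2=K_2(\rho_1,\rho_2,\gamma,\Lambda,N,r_0)>0$ and $K_3=K_3(\rho_1,\rho_2,N,r_0)>0$ such that
\begin{displaymath}
\|\psi_n\|_{L^\infty(B_r(x_0))}\geq K_2\, r^{K_3(\gamma+\sqrt\Lambda+1)}
\end{displaymath}
for all $r\in (0,\rho_1)$. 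Substituting $\gamma\leq C'(1+\sqrt\Lambda)$ yields $K_3(\gamma+\sqrt\Lambda+1)\leq K_3(C'+1)(1+\sqrt\Lambda)$, so setting $C_1:=K_2$, $C_2:=K_3(C'+1)$ and $r_1:=\rho_1$ delivers \eqref{eq:4:rhobound}.

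The step I expect to require the most care is ensuring that $C_2$ is genuinely independent of $\Lambda$: the exponent in Lemma~\ref{lemma:4:ov} contains both $\gamma$ and an extra $\sqrt\Lambda$ term, so we need $\gamma$ to be absorbable into the form $C_2(1+\sqrt\Lambda)$. This is precisely what the exponential shape of the lower bound $Ae^{-C\sqrt\Lambda}$ from Lemma~\ref{lemma:4:chain} buys us, since $\log(A^{-1}e^{C\sqrt\Lambda})$ is linear in $\sqrt\Lambda$; a weaker or qualitatively different lower bound would break the scaling. By contrast, $C_1=K_2$ inherits the $\Lambda$-dependence from Lemma~\ref{lemma:4:ov} and is only required to depend on $N,\Omega,\Omega',\Lambda$ (and $r_0,\kappa$), as in the statement of the theorem.
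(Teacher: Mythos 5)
Your proposal is correct and follows essentially the same route as the paper: the paper likewise feeds the uniform doubling-type bound from Lemma~\ref{lemma:4:chain} (via the crude estimate $\|\psi_n\|_{L^\infty(B_{\rho_1}(x))}/\|\psi_n\|_{L^\infty(B_{\rho_2}(x))}\geq \|\psi_n\|_{L^\infty(B_{\rho_1}(x))}/\|\psi_n\|_{L^\infty(\Omega')}\geq Ae^{-C\sqrt\Lambda}$) into Lemma~\ref{lemma:4:ov} with $\gamma=-\ln\eta/\ln(\rho_2/\rho_1)$, and absorbs the resulting $\ln\eta\sim\sqrt\Lambda$ term into $C_2(1+\sqrt\Lambda)$ exactly as you do. The only difference is presentational: the paper first records the general implication for an arbitrary $\eta\in(0,1)$ and then specialises, whereas you substitute $\eta=Ae^{-C\sqrt\Lambda}$ directly; your closing remark about why $C_2$ stays independent of $\Lambda$ is precisely the point of the paper's bookkeeping.
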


\begin{proof}
Again, we follow the proof of \cite[Theorem~5.1]{kukavica:98}. We first note that
\begin{equation}
\label{eq:4:etabound}
\frac{\|\psi_n\|_{L^\infty(B_{\rho_1}(x))}}{\|\psi_n\|_{L^\infty(B_{\rho_2}(x))}} \geq \eta \text{ implies }
\|\psi_n\|_{L^\infty(B_\rho(x))} \geq C_1\rho^{C_2(1+\sqrt\Lambda-\ln\eta)}
\end{equation}
for $\rho<\rho_1<\rho_2<r_0$ if $\eta \in (0,1)$, where $C_2>0$ does not depend on $n,x,\Lambda,\rho$ and $C_1>0$ does not depend on $n,x,\rho$.

To see this, fix $\eta\in (0,1)$, choose $0<\rho_1<\rho_2<r_0$ and set
\begin{displaymath}
	\gamma = -(\ln\eta)/\ln\frac{\rho_2}{\rho_1} > 0;
\end{displaymath}
we may write $\gamma = -a\ln\eta$, $a=a(\rho_1,\rho_2)$. Rearranging, and recalling the assumption on $\eta$ in \eqref{eq:4:etabound},
\begin{displaymath}
	\|\psi_n\|_{L^\infty(B_{\rho_2(x_0)})} \leq \Bigl(\frac{\rho_2}{\rho_1}\Bigr)^\gamma
	\|\psi_n\|_{L^\infty(B_{\rho_1(x_0)})}.
\end{displaymath}
Applying Lemma~\ref{lemma:4:ov} yields \eqref{eq:4:etabound}, with $C_1$ and $C_2$ having the correct dependences (writing $\gamma=-a\ln\eta$, we absorb the $a$ into the constant $C_2$).

So we are done if we can show that \eqref{eq:4:etabound} holds for some $\eta>0$ and $0<\rho_1<\rho_2<\rho$, independent of $n$ and $x$. But this follows immediately from Lemma~\ref{lemma:4:chain}: there exist $A,C>0$ such that, for any appropriate $\rho_2>\rho_1$ and any $x\in\overline\Omega$ and $n\in\N$,
\begin{equation}
\label{eq:4:generalcontrol}
\frac{\|\psi_n\|_{L^\infty(B_{\rho_1}(x))}}{\|\psi_n\|_{L^\infty(B_{\rho_2}(x))}} \geq
\frac{\|\psi_n\|_{L^\infty(B_{\rho_1}(x))}}{\|\psi_n\|_{L^\infty(\Omega')}} \geq Ae^{-C\sqrt\Lambda}.
\end{equation}
Combining this with \eqref{eq:4:etabound}, together with a suitable rearrangement and change of constants yields \eqref{eq:4:rhobound}.
\end{proof}

We now combine Theorem~\ref{th:4:rhobound} with the results of \cite{hardt:89}. To that end, set $d:= \lfloor C_2(1+\sqrt \Lambda) \rfloor + 1$. We introduce the weighted $L^2$-norm used in \cite{hardt:89}:
\begin{displaymath}
	\|u\|_r = \|u\|_{r,x}:= r^{-\frac{N}{2}}\|u\|_{L^2(B_r(x))}.
\end{displaymath}
This is equivalent to the $L^\infty$-norm under certain conditions. In one direction we always have the trivial bound
\begin{equation}
\label{eq:4:trivial}
	\|u\|_{r,x}\leq C(N)\|u\|_{L^\infty(B_r(x))}
\end{equation}
for a dimensional constant $C>0$. The bound in the other direction, for solutions to elliptic equations, comes from \cite[Theorem~8.17 or 9.20]{gilbarg:83}. That is, any $H^1$-solution $u$ of $\Delta u+V_n u=0$ in $\Omega$ satisfies, for $B_{2R}(y) \subset \Omega$,
\begin{equation}
\label{eq:4:nontrivial}
	\|u\|_{L^\infty(B_R(y))}\leq CR^{-\frac{N}{2}}\|u\|_{L^2(B_{2R}(y))},
\end{equation}
where $C=C(N,\sqrt\Lambda R)$, $\Lambda \geq \|V_n\|_{L^\infty(\Omega)}$. In our case, we may absorb $R^{-N/2}$ into the constant $C$, by replacing $R$ by $\textrm{diam}(\Omega')$, say. Applying this to $\psi_n$, combining it with Theorem~\ref{th:4:rhobound}, and making an appropriate rescaling and concomitant adjustment of constants (including writing $d$ in place of $C_2(1+\sqrt\Lambda)$), there exists some $r^*>0$ independent of $n,x,\Lambda$ such that
\begin{equation}
\label{eq:4:hsov}
	r^{-d}\|\psi_n\|_{r,x} \geq A,
\end{equation}
for all $r \in (r,r^*)$, where $A>0$ does not depend on $n\in \N$, $x\in\overline\Omega$ or $r$.

\begin{lemma}
\label{lemma:4:boundedrm}
Suppose $r_1 \in (0,r^*)$ fixed satisfies \eqref{eq:4:hsov}. Then there exists $m=m(r_1)$ such that, for each $n\in\N$ and $x \in \overline \Omega$, there exists $r=r(n,x) \in [\frac{r_1}{2^m},r_1]$ such that
\begin{equation}
\label{eq:4:boundedrm}
	\|\psi_n\|_{r,x}<2^{d+1}\|\psi_n\|_{\frac{r}{2},x}.
\end{equation}
\end{lemma}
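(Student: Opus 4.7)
The plan is to argue by contradiction via a standard dyadic telescoping. Suppose that for some particular $n \in \N$ and $x \in \overline\Omega$ every radius of the form $r_1/2^k$ with $k = 0, 1, \ldots, m-1$ fails to satisfy \eqref{eq:4:boundedrm}; that is,
\begin{displaymath}
\|\psi_n\|_{r_1/2^k, x} \geq 2^{d+1} \|\psi_n\|_{r_1/2^{k+1}, x}, \qquad k = 0, \ldots, m-1.
\end{displaymath}
Multiplying these $m$ inequalities telescopically gives
\begin{displaymath}
\|\psi_n\|_{r_1/2^m, x} \leq 2^{-m(d+1)} \|\psi_n\|_{r_1, x}.
\end{displaymath}

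Now I would control each side by the tools already set up. For the right-hand side, the normalisation $\|\psi_n\|_{L^\infty(\Omega')} = 1$ together with the trivial inequality \eqref{eq:4:trivial} gives $\|\psi_n\|_{r_1, x} \leq C(N)$, a purely dimensional bound independent of $n$, $x$ and $\Lambda$. For the left-hand side, since $r_1 < r^*$ ensures $r_1/2^m < r^*$ for all $m \geq 0$, I can apply \eqref{eq:4:hsov} at the radius $r_1/2^m$ to get
\begin{displaymath}
\|\psi_n\|_{r_1/2^m, x} \geq A \Bigl(\frac{r_1}{2^m}\Bigr)^d,
\end{displaymath}
with $A > 0$ independent of $n \in \N$ and $x \in \overline\Omega$.

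Combining the two estimates and cancelling the common factor $2^{-md}$ produces
\begin{displaymath}
A\, r_1^d \cdot 2^m \leq C(N).
\end{displaymath}
Choosing $m = m(r_1)$ to be the smallest integer with $2^m > C(N)/(A r_1^d)$ therefore contradicts the standing hypothesis. This choice of $m$ depends only on $r_1$ (through $A$, $d$ and $r_1^d$), and not on the particular $n$ or $x$, so for each such $n$ and $x$ at least one $k \in \{0, 1, \ldots, m-1\}$ must yield $r = r_1/2^k \in [r_1/2^m, r_1]$ satisfying \eqref{eq:4:boundedrm}.

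There is really no serious obstacle in this argument; it is a pigeonhole-style dyadic iteration. The only point that deserves care is verifying that the resulting $m$ is uniform in $n$ and $x$, and this is built into the set-up: the lower bound from Theorem~\ref{th:4:rhobound}, hence \eqref{eq:4:hsov}, is uniform in both parameters, and the upper bound on the starting value $\|\psi_n\|_{r_1, x}$ is uniform for the same reason via the normalisation $\|\psi_n\|_{L^\infty(\Omega')} = 1$.
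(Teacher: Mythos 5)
Your proof is correct and is essentially the same dyadic pigeonhole argument as the paper's: assume \eqref{eq:4:boundedrm} fails down to scale $r_1/2^m$, telescope, bound above by \eqref{eq:4:trivial} with the normalisation and below by \eqref{eq:4:hsov}, and choose $m$ just large enough to force a contradiction. (Your arithmetic is in fact cleaner than the paper's, which contains a sign slip in the exponent of $2^m$ at the corresponding step.)
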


\begin{proof}
Let $C=C(N)$ be the constant from \eqref{eq:4:trivial}. We will choose $m$ to be
\begin{displaymath}
	m:=\lfloor \log_2(CA^{-1}{r_1}^{-d})\rfloor+1,
\end{displaymath}
with $A$ and $d$ as in \eqref{eq:4:hsov}. Without loss of generality, we may assume $A\leq C^{-1}$ so that $m\geq 1$. If \eqref{eq:4:boundedrm} fails for all such $r$, choosing $r=r_1,\frac{r_1}{2},\ldots,\frac{r_1}{2^m}$, by iteration we have
\begin{displaymath}
\begin{split}
\|\psi_n\|_{r,x} &\geq 2^{d+1}\|\psi_n\|_{\frac{r_1}{2},x}\geq\ldots\geq (2^{d+1})^m\|\psi_n\|_{\frac{r_1}{2^m},x}\\
	&\geq (2^{d+1})^m\Bigl(\frac{r_1}{2^m}\Bigr)^d A = \frac{{r_1}^d}{2^m}A
\end{split}
\end{displaymath}
using \eqref{eq:4:hsov}. Noting that $\|\psi_n\|_{r,x}\leq C$ by \eqref{eq:4:trivial} and our normalisation, this implies $C \geq A ({r_1}^d)/(2^m)$, that is,
\begin{displaymath}
m \leq \log_2(CA^{-1}{r_1}^{-d}),
\end{displaymath}
a contradiction.
\end{proof}

We can now prove our main local result. Theorem~\ref{th:size} follows from this immediately, by taking a suitable open covering of the compact set $\overline \Omega$.

\begin{theorem}
\label{th:4:localsize}
There exist constants $d>0$ and $\rho_0>0$ depending only on $\Omega$, $\Omega'$, $N$, $\Lambda$ and $\kappa$ such that
\begin{displaymath}
\sigma(B_\rho(x) \cap \{y\in\Omega':\psi_n(y)=0\}) \leq c(N)d\rho^{N-1}
\end{displaymath}
for all $\rho \in (0,\rho_0)$, all $x\in\overline\Omega$ and all $n\in\N$.
\end{theorem}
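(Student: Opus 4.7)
The plan is to combine Lemma~\ref{lemma:4:boundedrm} --- which supplies, uniformly in $x \in \overline{\Omega}$ and $n \in \N$, a scale at which $\psi_n$ has controlled doubling index $d$ --- with the main quantitative nodal set estimate of Hardt--Simon \cite{hardt:89}. In a form adapted to our setting, their theorem asserts that if $u \in C^2$ solves $\Delta u + Vu = 0$ on $B_r(x_0)$ with $\|V\|_{L^\infty} \leq \Lambda$ and satisfies a doubling estimate $\|u\|_{r,x_0} \leq 2^{d+1}\|u\|_{r/2,x_0}$, then
\[
\sigma(\{u=0\} \cap B_\rho(x_0)) \leq c(N) d \rho^{N-1}
\]
for every $\rho \in (0, r/2]$, the constant $c(N)$ depending only on $N$ (the $\Lambda$-dependence is already built into $d := \lfloor C_2(1+\sqrt\Lambda)\rfloor + 1$).

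First I would fix $r_1 \in (0, r^*)$ satisfying \eqref{eq:4:hsov} and let $m = m(r_1)$ be as given by Lemma~\ref{lemma:4:boundedrm}. Setting $\rho_0 := r_1/2^{m+1}$, for any $x \in \overline{\Omega}$ and $n \in \N$ Lemma~\ref{lemma:4:boundedrm} furnishes a radius $r = r(n,x) \in [r_1/2^m, r_1]$ for which $\|\psi_n\|_{r,x} < 2^{d+1}\|\psi_n\|_{r/2,x}$; by construction $\rho_0 \leq r/2$.

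Next, since $r_1 < r_0 \leq \dist(\partial\Omega, \partial\Omega')$, the ball $B_r(x)$ lies inside $\Omega'$ and $\Delta\psi_n + V_n\psi_n = 0$ holds on it. Applying the Hardt--Simon estimate to $\psi_n$ on $B_r(x)$ with doubling exponent $d$ yields
\[
\sigma(B_\rho(x) \cap \{\psi_n = 0\}) \leq c(N) d \rho^{N-1}
\]
for every $\rho \in (0, \rho_0) \subset (0, r/2]$. Since $d$ and $\rho_0$ depend only on $N$, $\Omega$, $\Omega'$, $\Lambda$ and $\kappa$, this is the required local bound, and Theorem~\ref{th:size} then follows at once by covering the compact set $\overline{\Omega}$ with finitely many balls $B_{\rho_0/2}(x_i)$ and summing the local bounds.

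The main obstacle is invoking the correct form of the Hardt--Simon estimate, namely one that gives $\rho^{N-1}$ scaling at all sub-scales $\rho \in (0, r/2]$ once a single-scale doubling bound is assumed at scale $r$, rather than merely an estimate at the ambient scale $r/2$. This propagation is achieved in \cite{hardt:89} through an almost-monotonicity of an Almgren-type frequency function, whose $\|V\|_{L^\infty}$-dependence is benign and absorbable into the constants; verifying that the version of their result quoted above holds under our uniform-in-$n$ hypotheses is the one non-trivial step of the proof.
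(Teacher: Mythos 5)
Your proposal follows essentially the same route as the paper: Lemma~\ref{lemma:4:boundedrm} supplies, uniformly in $n$ and $x$, a scale $r(n,x)\in[r_1/2^m,r_1]$ at which the doubling bound \eqref{eq:4:boundedrm} holds, and \cite[Theorem~1.7]{hardt:89} is then applied on $B_r(x)$. The one correction concerns the form in which you quote Hardt--Simon: their Theorem~1.7 carries the smallness hypothesis $\sqrt\Lambda\,r_1^2 \leq (\varepsilon_0/d^{2N+3})^{3d}$ (arranged by shrinking $r_1$, which remains independent of $n$ and $x$) and yields the nodal bound only for $\rho \leq (\varepsilon_0/d^{2N+3})^{3d}\,r$, not for all $\rho\leq r/2$; accordingly one must take $\rho_0 = (\varepsilon_0/d^{2N+3})^{3d}\,r_1/2^m$ rather than $r_1/2^{m+1}$, which is still uniform in $n$ and $x$, so the conclusion is unaffected.
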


\begin{proof}
We choose $r_1>0$ sufficiently small so that \eqref{eq:4:hsov} holds for $r_1$, and that $$\sqrt\Lambda {r_1}^2 \leq \Bigl(\frac{\varepsilon_0}{d^{2N+3}}\Bigr)^{3d},$$ where $\varepsilon_0=\varepsilon_0(N) \in (0,1/2]$ is a dimensional constant from \cite[Theorem~1.7]{hardt:89}; note that $r_1$ is independent of $n$ and $x$. Note also that $B_{r_1}(x) \subset \Omega'$ for all $n\in\N$ and $x\in\overline\Omega$. We obtain $m(r_1)$ as in Lemma~\ref{lemma:4:boundedrm}, for this $r_1$. Now fix $n\in\N$ and $x\in\overline\Omega$ arbitrary.

It follows that for this $n$ and $x$, there exists $r=r(n,x) \in [\frac{r_1}{2^m},r_1]$ for which \eqref{eq:4:boundedrm} holds. Hence we may apply \cite[Theorem~1.7]{hardt:89} to $\psi_n$ on $B_r(x)$, using \eqref{eq:4:boundedrm} for $R$ in (1.5) there. Setting
\begin{displaymath}
\rho_{n,x}:=\Bigl(\frac{\varepsilon_0}{d^{2N+3}}\Bigr)^{3d}r \geq \rho_0:=\Bigl(\frac{\varepsilon_0}{d^{2N+3}}\Bigr)^{3d}
\frac{r_1}{2^m} >0,
\end{displaymath}
we have
\begin{displaymath}
	\sigma(B_\rho(x)\cap \{y\in\Omega':\psi_n(y)=0\}) \leq c(N)d\rho^{N-1}
\end{displaymath}
for all $\rho \leq \rho_{n,x}$. In particular, this holds for all $\rho\leq\rho_0$, which is independent of $x \in \overline\Omega$ and $n\in\N$.
\end{proof}

\section{Proof of Theorem~\ref{th:3:necest}}
\label{sec:necest}

Here we take the same notation and assumptions as in Section~\ref{sec:domain}. Although the conclusion of Theorem~\ref{th:3:necest} is intuitively obvious, its proof does not seem to be straightforward. The problem seems to arise because, under any normalisation of the eigenfunctions $\psi_n$ on $\Omega_n$, we expect them to converge to zero on $B_{R_1}$: the trick is to show that they do so in such a fashion that $\kappa>0$. So to prove the theorem, we first study the nodal domain concentrated in $B_{R_1}$, say $\Omega_n^- = \{x\in \Omega_n: \psi_n<0\}$, and the function $\psi_n^-$, given by $-\psi_n$ on $\Omega_n^-$ and extended by $0$ on the remainder of $\Omega_n$. The proof will be more complicated in the Robin case than in the Dirichlet case, and where there is a significant difference we split the proof accordingly. We first impose another condition on the choice of $\varepsilon(n)$ from Section~\ref{sec:domain}.

\begin{lemma}
\label{lemma:5:onenode}
After a suitable normalisation of the eigenfunctions, for each fixed $n$ we have $|\Omega_{n,\varepsilon}^- \cap A| \to 0$ as $\varepsilon \to 0$.
\end{lemma}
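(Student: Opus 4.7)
The strategy is to identify the limit of a suitably normalised $\psi_{n,\varepsilon}$ as $\varepsilon \to 0$ as the first (Robin, resp.\ Dirichlet) eigenfunction $\varphi_A$ of $A$, trivially extended by zero on $U_n$; since $\varphi_A > 0$ on $A$, the negative nodal domain of $\psi_{n,\varepsilon}$ must concentrate on $U_n$ in the limit, and a Chebyshev-type argument then delivers $|\Omega^-_{n,\varepsilon} \cap A| \to 0$.

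First I would normalise so that $\|\psi_{n,\varepsilon}\|_{L^2(\Omega_{n,\varepsilon})} = 1$ and fix the sign by requiring $\int_A \psi_{n,\varepsilon}\,dx \geq 0$. Since $\mu_2(\Omega_{n,\varepsilon}) \to \mu_1(A)$ by Lemma~\ref{lemma:3:omegaconv}, the weak formulation of the eigenvalue problem yields a uniform $H^1$-bound on the $\psi_{n,\varepsilon}$ over a fixed reference domain $\Omega_{n,\varepsilon_0}$ containing all the $\Omega_{n,\varepsilon}$ under consideration. The same domain-convergence machinery used in Lemma~\ref{lemma:3:omegaconv} (namely \cite[Corollary~3.7]{dancer:97} in the Robin case and \cite[Theorem~7.5]{daners:03} in the Dirichlet case) extends to convergence of eigenfunctions associated with a simple limiting eigenvalue. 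Passing to a subsequence, $\psi_{n,\varepsilon_k} \to \psi_0$ strongly in $L^2$, where $\psi_0$ is an eigenfunction of the decoupled problem on $U_n \cup A$ with eigenvalue $\mu_1(A)$.

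By \eqref{eq:3:basicbound}, $\mu_1(A)$ lies strictly between $\mu_1(U_n)$ and $\mu_2(U_n)$ and is simple on $A$, so it is a simple eigenvalue of the decoupled problem on $U_n \cup A$, with one-dimensional eigenspace spanned by $\varphi_A$ extended by zero on $U_n$. Hence $\psi_0 = c\varphi_A$ for some $c \in \R$. Because the passages $\bigcup_k S_k(n,\varepsilon)$ have total measure tending to zero with $\varepsilon$, the $L^2$-mass is preserved in the limit, giving $\|\psi_0\|_{L^2(U_n \cup A)} = 1$, so $c \neq 0$; the sign normalisation then forces $c > 0$. In the Robin case, $\varphi_A$ is continuous and, by Hopf's lemma combined with $\partial_\nu \varphi_A = -\beta \varphi_A$, strictly positive on $\overline A$, so $\varphi_A \geq c_1 > 0$ there and Chebyshev gives
\begin{displaymath}
|\Omega^-_{n,\varepsilon_k} \cap A| \leq (c c_1)^{-2}\|\psi_{n,\varepsilon_k} - c\varphi_A\|_{L^2(A)}^2 \to 0.
\end{displaymath}
In the Dirichlet case the cutoff $A_\eta := \{x \in A: \varphi_A(x) \geq \eta\}$ gives the same bound up to the error $|A \setminus A_\eta|$, which vanishes as $\eta \to 0$; sending $\varepsilon_k \to 0$ before $\eta \to 0$ closes the argument. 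Uniqueness of the limit upgrades the subsequential convergence to the whole sequence.

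The main obstacle is the identification of $\psi_0$: one must rule out that the $L^2$-mass of $\psi_{n,\varepsilon}$ concentrates in the shrinking passages and leaves a trivial limit, and verify that the boundary conditions on the varying $\partial \Omega_{n,\varepsilon}$ pass correctly to the limit problem on $\partial(U_n \cup A)$ --- particularly the Robin boundary integral $\int_{\partial \Omega_{n,\varepsilon}} \beta v^2\,d\sigma$, since portions of $\partial \Omega_{n,\varepsilon}$ are being ``opened'' and converted to interior as $\varepsilon \to 0$. This is precisely the scenario handled by the Mosco-type spectral convergence results of \cite{dancer:97,daners:03} already exploited in Lemma~\ref{lemma:3:omegaconv}, and simplicity of the limit eigenvalue is what promotes eigenvalue convergence to eigenfunction convergence.
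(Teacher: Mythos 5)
Your proposal is correct and follows essentially the same route as the paper: both rest on the $L^2$-convergence of $\psi_{n,\varepsilon}$ to the first eigenfunction of $A$ extended by zero (obtained from the spectral convergence results of \cite{dancer:97,daners:03} together with simplicity of the limiting eigenvalue, which the paper isolates as Lemma~\ref{lemma:5:conv}), followed by the observation that the limit is bounded below by a positive constant on a subset of $A$ of almost full measure, so the negative set in $A$ must shrink. Your Chebyshev phrasing with the cutoff $A_\eta$ is just the direct form of the paper's contradiction argument on the sub-annulus $A_{\rho_2,\rho_3}$.
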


\begin{proof}
Denote by $\psi_A>0$ the first eigenfunction of $A$. If we extend $\psi_A$ by $0$, then $\psi_A \in C^\infty(\overline{U_n\cup A})$ is the second eigenfunction of $U_n\cup A$, identically $0$ in $U_n$ and strictly positive in $A$. By Lemma~\ref{lemma:5:conv} below, the eigenfunctions $\psi_{n,\varepsilon}$ of $\Omega_{n,\varepsilon}$ extended by $0$ satisfy $\psi_{n,\varepsilon} \to \psi_A$ in $L^2(\R^2)$ as $\varepsilon\to 0$.

Assume for a contradiction that there exists $\delta >0$ such that $|\Omega_{n,\varepsilon}^- \cap A| \geq \delta$ for all $\varepsilon>0$ sufficiently small. Choose $R_2<\rho_2<\rho_3<R_3$ such that $|A \setminus A_{\rho_2, \rho_3}| < \delta/2$. Then there exists $K=K(\rho_2,\rho_3)>0$ such that $\psi_A(x) \geq K$ on $A_{\rho_2, \rho_3}$; moreover, for each $\varepsilon>0$, we must have $|\Omega_{n,\varepsilon}^- \cap A_{\rho_2, \rho_3}| \geq \delta/2$. This implies that for every $\varepsilon>0$,
\begin{displaymath}
\begin{split}
	\|\psi_{n,\varepsilon}-\psi_A\|_{L^2(\R^2)}^2 &\geq \int_{\Omega_{n,\varepsilon}^- \cap A_{\rho_2,\rho_3}}
	|\psi_A - \psi_{n,\varepsilon}|^2\,dx\\
	&\geq K^2|\Omega_{n,\varepsilon}^- \cap A_{\rho_2, \rho_3}| \geq K^2\frac{\delta}{2} \not\to 0,
\end{split}
\end{displaymath}
contradicting $\psi_{n,\varepsilon}\to\psi_A$ in $L^2(\R^2)$.
\end{proof}

Thus one of the nodal domains must have essentially trivial intersection with the outer annulus. When choosing the $\varepsilon(n)$ to reduce to one index $\Omega_n=\Omega_{n,\varepsilon(n)}$, we specify in addition to the other conditions that $|\Omega_n^- \cap A| \to 0$ as $n\to\infty$. Let us now prove the convergence result on the eigenfunctions used in the previous lemma. While this is a specific example of a general domain perturbation result, we will only state it for this special case.

\begin{lemma}
\label{lemma:5:conv}
Let $\psi_{n,\varepsilon}$, $\psi_A$ be the second eigenfunctions of $\Omega_{n,\varepsilon}$ and $U_n \cup A$, respectively, normalised appropriately and extended by zero to functions in $L^2(\R^2)$. Then for each fixed $n \geq 1$, $\psi_{n,\varepsilon} \to \psi_A$ as $\varepsilon \to 0$.
\end{lemma}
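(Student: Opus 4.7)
The plan is to extract a weakly convergent subsequence of the zero extensions $\tilde\psi_{n,\varepsilon}$ as $\varepsilon \to 0$, identify the weak limit as $\psi_A$, and upgrade to strong $L^2(\mathbb{R}^2)$ convergence. First I would normalize $\|\psi_{n,\varepsilon}\|_{L^2(\Omega_{n,\varepsilon})} = 1$, so that the Rayleigh quotient identity
\[
\int_{\Omega_{n,\varepsilon}}|\nabla\psi_{n,\varepsilon}|^2\,dx + \beta\int_{\partial\Omega_{n,\varepsilon}} \psi_{n,\varepsilon}^2\,d\sigma = \mu_2(\Omega_{n,\varepsilon}),
\]
combined with the eigenvalue bound from Lemma~\ref{lemma:3:omegaconv}, yields a uniform bound on $\|\psi_{n,\varepsilon}\|_{H^1(\Omega_{n,\varepsilon})}$ as $\varepsilon\to 0$. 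Since $\Omega_{n,\varepsilon}\subset B_{R_3}$, the $\tilde\psi_{n,\varepsilon}$ lie in a bounded set of $L^2(\mathbb{R}^2)$; and on any compact $K\subset U_n\cup A$ (which is contained in $\Omega_{n,\varepsilon}$ for all $\varepsilon$ small), $\psi_{n,\varepsilon}|_K$ is bounded in $H^1(K)$. Rellich--Kondrachov then produces a subsequence $\psi_{n,\varepsilon_k}\to\psi^*$ strongly in $L^2_{\textup{loc}}(U_n\cup A)$ and weakly in $L^2(\mathbb{R}^2)$.

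Next, I would pass to the limit in the weak form of the equation tested against smooth $\varphi\in H^1(U_n\cup A)$, extended smoothly across the (shrinking) passages so as to be admissible on each $\Omega_{n,\varepsilon_k}$. Using $\mu_2(\Omega_{n,\varepsilon_k})\to\mu_1(A)$ from Lemma~\ref{lemma:3:omegaconv}, together with the strong $L^2_{\textup{loc}}$ convergence and convergence of traces on the unaffected parts of the boundary, one sees that $\psi^*|_{U_n}$ and $\psi^*|_A$ each satisfy the Robin eigenvalue equation with eigenvalue $\mu_1(A)$. Since \eqref{eq:3:basicbound} gives $\mu_1(U_n)<\mu_1(A)<\mu_2(U_n)$, the value $\mu_1(A)$ is not in the spectrum of $-\Delta^\beta_{U_n}$, forcing $\psi^*|_{U_n}\equiv 0$; simplicity of $\mu_1(A)$ identifies $\psi^*|_A$ as a scalar multiple of $\psi_A$.

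Finally, to upgrade to strong convergence in $L^2(\mathbb{R}^2)$, observe that the passage region has measure $\lvert\bigcup_k S_k(n,\varepsilon)\rvert=O(n\varepsilon)\to 0$ for fixed $n$, while Moser iteration applied to $-\Delta\psi_{n,\varepsilon}=\mu_2(\Omega_{n,\varepsilon})\psi_{n,\varepsilon}$ with bounded eigenvalue and $L^2$-norm yields a uniform $L^\infty$-bound on $\psi_{n,\varepsilon}$. Hence the $L^2$-mass carried by the passages tends to $0$, and since $\|\tilde\psi_{n,\varepsilon_k}\|_{L^2(\mathbb{R}^2)}=1$ we conclude $\|\psi^*\|_{L^2(\mathbb{R}^2)}=1$; weak convergence together with convergence of $L^2$-norms upgrades to strong convergence, and the appropriate sign choice gives $\psi^*=\psi_A$. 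The step I expect to be most delicate is justifying passage to the limit in the Robin boundary integral, since $\partial\Omega_{n,\varepsilon}$ is non-smooth and varies non-trivially in $\varepsilon$ near the $2n$ corner points where the passages meet $\partial U_n$ and $\partial A$; the cleanest way around this is to restrict test functions to vanish in a small $\varepsilon$-independent neighbourhood of these corners and argue separately that the corresponding remainders are negligible.
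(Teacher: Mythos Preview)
Your approach is essentially correct but quite different from the paper's. The paper gives a two-line proof: it invokes \cite[Corollary~3.7]{dancer:97} --- the same domain-perturbation result already used in Lemma~\ref{lemma:3:omegaconv} for eigenvalue convergence --- to obtain convergence of the \emph{eigenprojections} in $L^2(\R^2)$, and then simplicity of the second eigenvalue (Lemma~\ref{lemma:3:simplicity}) immediately upgrades this to eigenfunction convergence. The Dirichlet case is handled by Mosco convergence via \cite{daners:03} in the same spirit. Your hands-on compactness argument is, in effect, re-deriving a special case of that perturbation machinery from scratch; it is more self-contained and makes the mechanism transparent (uniform energy bound, Rellich on the fixed smooth set $U_n\cup A$, identification of the limit via the spectral gap \eqref{eq:3:basicbound}, norm convergence), at the cost of being considerably longer.

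One step needs more care. The uniform $L^\infty$ bound you claim via ``Moser iteration'' is not automatic: the constants in up-to-the-boundary Moser iteration for Robin problems a priori depend on the Lipschitz character of the domain, and this degenerates for $\Omega_{n,\varepsilon}$ as the passages narrow (the interior-cone radius at points in the channels tends to zero). A bound of the form $\|\psi\|_\infty \leq C(N,\beta,|\Omega|,\mu)\|\psi\|_2$ with no dependence on boundary geometry is what you actually need; such bounds are available for the Robin problem via the Maz'ya-type Sobolev inequality and the iteration in \cite{daners:00}, and you should cite that rather than a generic interior estimate. Finally, since your argument proceeds via subsequences, remember to note that uniqueness of the limit (simplicity of $\mu_1(A)$ together with the sign convention) promotes subsequential convergence to convergence of the full family as $\varepsilon\to 0$.
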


\begin{proof}
In the Robin case, we apply \cite[Corollary~3.7]{dancer:97} to the $\Omega_{n,\varepsilon}$, valid here as was argued in the proof of Lemma~\ref{lemma:3:omegaconv}, this time to obtain convergence of the eigenprojections in $L^2(\R^2)$. The conclusion follows since the corresponding eigenvalues are simple for $\varepsilon>0$ sufficiently small (see Lemma~\ref{lemma:3:simplicity}).
The Dirichlet case is even easier since $\Omega_{n,\varepsilon} \to U_n \cup A$ in the sense of Mosco (see, e.g., \cite{daners:03}; combine Proposition~7.4 with Corollary~4.2 there).
\end{proof}

From now on, we will revert to considering the single-indexed sequence $\Omega_n$ introduced in Section~\ref{sec:domain}, rather than the $\Omega_{n,\varepsilon}$.

\begin{lemma}
\label{lemma:5:wherenodeis}
There exist $C_1>0$ and $n_1\geq 1$ such that
\begin{displaymath}
	|\Omega_n^-\cap B_{R_1}| \geq C_1
\end{displaymath}
for all $n \geq n_1$.
\end{lemma}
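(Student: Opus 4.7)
The plan is to argue by contradiction, assuming that $|\Omega_n^- \cap B_{R_1}| \to 0$ along a subsequence and showing that this would force $\mu_2(\Omega_n) \to \infty$, contradicting the convergence $\mu_2(\Omega_n) \to \mu_1(A) < \infty$ established in Lemma~\ref{lemma:3:omegaconv} and the choice of $\varepsilon(n)$.

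The first step is to upgrade the hypothesis to $|\Omega_n^-| \to 0$. Decomposing $\Omega_n = U_n \cup A \cup \bigcup_k S_k(n,\varepsilon(n))$ and using $U_n \subset B_{(1+1/n)R_1}$ together with the elementary computation $|S_k(n,\varepsilon(n))| = R_3^2\,\varepsilon(n)$, the part of $\Omega_n$ lying outside $B_{R_1} \cup A$ has area at most $O(1/n) + O(n\varepsilon(n))$, which I can arrange to be $o(1)$ by imposing one additional smallness condition on $\varepsilon(n)$ in Section~\ref{sec:domain}. Combined with $|\Omega_n^- \cap A| \to 0$ from Lemma~\ref{lemma:5:onenode}, this gives $|\Omega_n^-| \to 0$ along the subsequence.

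The next step is to extract a divergent lower bound on $\mu_2(\Omega_n)$ in terms of $|\Omega_n^-|$. In the Dirichlet case this is immediate: $\psi_n|_{\Omega_n^-}$ is the first Dirichlet eigenfunction of $\Omega_n^-$ by Courant's theorem, so $\mu_2(\Omega_n) = \lambda_1(\Omega_n^-)$, and the two-dimensional Faber--Krahn inequality yields $\lambda_1(\Omega_n^-) \geq c/|\Omega_n^-| \to \infty$. In the Robin case I would invoke the mixed Dirichlet-Robin characterisation of $\mu_2(\Omega_n)$ from the proof of Lemma~\ref{lemma:3:nodalset}(ii): any admissible $v \in H$ in \eqref{eq:3:mixed} vanishes on $\partial_i \Omega_n^-$, so $\int_{\partial_e \Omega_n^-} \beta v^2\,d\sigma = \int_{\partial \Omega_n^-} \beta v^2\,d\sigma$, and since $H \subset H^1(\Omega_n^-)$, minimising the same Rayleigh quotient over the larger class $H^1(\Omega_n^-)$ gives the comparison $\mu_1(\Omega_n^-, \beta) \leq \mu_2(\Omega_n)$. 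The Bossel--Daners Robin Faber--Krahn inequality then yields $\mu_1(\Omega_n^-, \beta) \geq \mu_1(B^*_n, \beta)$ with $|B^*_n| = |\Omega_n^-|$, and the standard small-radius asymptotic $\mu_1(B_r, \beta) \sim 2\beta/r$ as $r \to 0$ forces $\mu_2(\Omega_n) \to \infty$, the desired contradiction.

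The main obstacle is the Robin case: the nodal domain $\Omega_n^-$ carries mixed boundary conditions rather than pure Dirichlet, which precludes a direct application of Courant plus Faber--Krahn and forces the two-step route through the pure Robin eigenvalue and Bossel--Daners. A secondary point worth recording explicitly is the divergence $\mu_1(B_r,\beta) \to \infty$ as $r \to 0$ at fixed $\beta>0$, which is classical (e.g.\ from the transcendental equation $\sqrt{\mu}\,J_1(\sqrt{\mu}r)/J_0(\sqrt{\mu}r) = \beta$) but is precisely the mechanism preventing this argument from giving anything in the Neumann limit, consistent with Proposition~\ref{prop:failure}.
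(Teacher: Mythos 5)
Your argument is correct and follows essentially the same route as the paper: compare $\mu_2(\Omega_n)$ with $\mu_1(B_n)$ for a ball $B_n$ with $|B_n|=|\Omega_n^-|$ (Faber--Krahn in the Dirichlet case; in the Robin case the reduction through the mixed Dirichlet--Robin characterisation to the pure Robin eigenvalue of $\Omega_n^-$ followed by Bossel--Daners, which is precisely the trick the paper imports from \cite{kennedy:09}), and play the divergence of $\mu_1(B_n)$ as $|B_n|\to 0$ off against the boundedness of $\mu_2(\Omega_n)$. The differences are cosmetic: you phrase it as a contradiction, and you explicitly supply the passage between a lower bound on $|\Omega_n^-|$ and one on $|\Omega_n^-\cap B_{R_1}|$ via the vanishing area of $U_n\setminus B_{R_1}$, the passages and $\Omega_n^-\cap A$ --- a step the paper's proof leaves implicit.
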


\begin{proof}
Here we distinguish between the Dirichlet and Robin cases. In the Dirichlet case, the Faber-Krahn inequality implies that $\lambda_n \geq \lambda_1 (B_n)$, where $B_n$ is a ball such that $|B_n|=|\Omega_n^-|$. Noting that $\lambda_n \to \lambda_1(A)$, this means that, given $\delta>0$ arbitrary, there exists $n_1\geq 1$ such that $\lambda_1(B_n)\leq \lambda_1(A)+\delta$ for all $n \geq n_1$. Since $\lambda_1(B_n)$ increases monotonically to $\infty$ as $|B_n|\to 0$, this implies a uniform lower bound on $|B_n| = |\Omega_n^-|$.

In the Robin case, we use the same argument, except that we cannot apply the Robin Faber-Krahn inequality directly to obtain $\mu_n \geq \mu_1(B_n)$. Instead, we use exactly the same trick as in \cite[Section~3]{kennedy:09}, where it was proved that for any $\Omega$ Lipschitz, if $|B|=|\Omega^-|$, then $\mu_2(\Omega) \geq \mu_1(B)$. We can now repeat the Dirichlet proof verbatim.
\end{proof}

The next lemma contains the core of the proof, and despite appearing quite obvious seems to require the most work, since, as mentioned, under the normalisation $\|\psi_n\|_{L^2(\R^2)}=1$, we expect $\psi_n \to 0$ on the set $B_{R_1}$ where $\psi_n^-$ is concentrated. As usual, we assume $\psi_n^-$ is extended by $0$ on $\R^2 \setminus \overline{\Omega_n^-}$.

\begin{lemma}
\label{lemma:5:core}
Normalise $\psi_n^-$ so that $\|\psi_n^-\|_{L^2(\R^2)}=\|\psi_n^-\|_{L^2(\Omega_n^-)}=1$. Then, possibly after passing to a subsequence, $\|\psi_n^-\|_{L^2(B_{R_1})} \to 1$ as $n\to\infty$. Moreover, for each $r \in (0,R_1)$, there exist $\delta = \delta(r) \geq 0$ and $n_0 = n_0(r) \geq 1$ such that
\begin{displaymath}
	\|\psi_n^-\|_{L^2(B_r(0))} \geq 1-\delta
\end{displaymath}
for all $n \geq n_0$, with $\delta(r)\to 1$ as $r\to R_1$.
\end{lemma}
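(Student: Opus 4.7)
The plan is to work directly with the normalised negative part $\tilde\psi_n := \psi_n^-$, extended by zero across the nodal set (which is permissible in $H^1$ since $\tilde\psi_n$ vanishes on the interior nodal boundary $\partial_i\Omega_n^-$), and to combine a uniform $H^1$-energy bound with the two-dimensional Sobolev embedding $H^1\hookrightarrow L^q$ for some $q>2$ to prevent $L^2$-mass from accumulating on sets of vanishing measure. As a preliminary I would observe that $\tilde\psi_n$ is the first eigenfunction of the mixed Dirichlet--Robin problem on $\Omega_n^-$ with eigenvalue $\mu_n\to\mu_1(A)$, and the Rayleigh quotient identity
\begin{displaymath}
\int_{\Omega_n^-}|\nabla\tilde\psi_n|^2\,dx + \beta\int_{\partial_e\Omega_n^-}\tilde\psi_n^2\,d\sigma = \mu_n
\end{displaymath}
gives $\|\nabla\tilde\psi_n\|_{L^2(\Omega_n)}\leq C$ uniformly in $n$.

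The core step is then the decomposition
\begin{displaymath}
\Omega_n^-\setminus B_{R_1} \subseteq (U_n\setminus B_{R_1})\cup(\Omega_n^-\cap A)\cup\bigcup_{k=0}^{n-1}(\Omega_n^-\cap S_k),
\end{displaymath}
where each piece has measure vanishing as $n\to\infty$: $|U_n\setminus B_{R_1}|\leq |B_{(1+1/n)R_1}\setminus B_{R_1}|=O(1/n)$ by construction of $U_n$, $|\Omega_n^-\cap A|\to 0$ by our choice of $\varepsilon(n)$ and Lemma~\ref{lemma:5:onenode}, and the passages have total area $O(\varepsilon(n))$. Given a uniform bound $\|\tilde\psi_n\|_{L^q(\Omega_n)}\leq C$, H\"older then yields
\begin{displaymath}
\|\tilde\psi_n\|_{L^2(\Omega_n^-\setminus B_{R_1})}^2 \leq |\Omega_n^-\setminus B_{R_1}|^{1-2/q}\|\tilde\psi_n\|_{L^q}^2 \longrightarrow 0,
\end{displaymath}
and, using $\|\tilde\psi_n\|_{L^2(\R^2)}=1$, the first assertion follows. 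For the \emph{moreover} I would apply the same H\"older estimate to the fixed annulus $B_{R_1}\setminus B_r$ of measure $\pi(R_1^2-r^2)$ to obtain $\|\tilde\psi_n\|_{L^2(B_{R_1}\setminus B_r)}^2\leq C(R_1-r)^{1-2/q}$, giving $\|\tilde\psi_n\|_{L^2(B_r)}\geq 1-\delta(r)$ with $\delta(r)=O((R_1-r)^{(1-2/q)/2})\to 0$ as $r\to R_1$, which I take to be the intended content of the stated limit.

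The hard part will be the uniform $L^q$-bound, specifically on the thin passages $S_k$ whose widths $\varepsilon(n)$ degenerate and on which the standard Sobolev constant blows up. On the fixed annulus $A$ and on the smooth domains $U_n$ (which converge to $B_{R_1}$ in $C^\infty$, so with Sobolev constants converging to that of $B_{R_1}$) the embedding in $\R^2$ applies with uniform constants. In the Dirichlet case the global bound is immediate by extending $\tilde\psi_n$ by zero to all of $H^1(\R^2)$; in the Robin case this extension fails, and my intended route is to exploit the rotational symmetry from Lemma~\ref{lemma:3:genericsym2} and the radial connectedness of the nodal set from Lemma~\ref{lemma:3:nodalset}, which together force the nodal curve to cross each passage transversally, so that $\tilde\psi_n$ vanishes on a full radial interface inside each $S_k\cap\Omega_n^-$. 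A Poincar\'e inequality on the resulting truncated sub-sector, combined with the $H^1$ energy bound and the bound $|S_k\cap\Omega_n^-|=O(\varepsilon(n))$, should then deliver the required local estimate. Passing to a subsequence, as permitted in the lemma, leaves flexibility in case it becomes cleaner to invoke weak $L^2$-compactness and identify a limit of $\tilde\psi_n$ directly, although the Hölder argument above already yields the pointwise-in-$n$ conclusion for the full sequence.
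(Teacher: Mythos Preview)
Your route via a uniform $L^q$-bound and H\"older is genuinely different from the paper's. For the first assertion in the Robin case, the paper avoids Sobolev embeddings on degenerating domains altogether: it builds an auxiliary Lipschitz $\widetilde\Omega_n \supset \Omega_n^-\setminus B_{R_1}$ with $|\widetilde\Omega_n|\to 0$, tests the Rayleigh quotient for $\mu_1(\widetilde\Omega_n)$ with $\varphi\psi_n^-$ for $\varphi\in C_c^\infty(\R^2\setminus\overline{B_{R_1}})$, and uses the Robin Faber--Krahn inequality to force $\mu_1(\widetilde\Omega_n)\to\infty$, hence $\|\varphi\psi_n^-\|_{L^2}\to 0$. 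For the ``moreover'' it extracts a weak-$H^1(B_{R_1})$ subsequential limit via Rellich and reads $\delta(r)$ off the limit function. Your H\"older estimate on the fixed annulus $B_{R_1}\setminus B_r$ (using only Sobolev on the fixed ball $B_{R_1}$) is more direct and yields an explicit rate without any subsequence, which is a real gain.

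There is, however, a genuine error in your justification on $U_n$: the domains $U_n$ do \emph{not} converge to $B_{R_1}$ in $C^\infty$, nor even in $C^1$. Their boundaries $r=R_1(1+\tfrac{1}{n}\cos n^2\theta)$ have Lipschitz constants of order $n$; this wild oscillation is precisely what drives $\mu_j(U_n)\to\lambda_j(B_{R_1})$. The spikes of $U_n\setminus B_{R_1}$ have height $R_1/n$ but base width $O(1/n^2)$, so neither a uniform cone condition nor a uniform John constant is available, and the embedding $H^1(U_n)\hookrightarrow L^q(U_n)$ with a uniform constant is at best delicate (and fails for large $q$). A cleaner repair is to bypass Sobolev on $U_n$ entirely: bound the trace $\|\psi_n^-\|_{L^2(\partial B_{R_1})}\leq C\|\psi_n^-\|_{H^1(B_{R_1})}$ on the fixed ball, then integrate radially, $\psi_n^-(r,\theta)=\psi_n^-(R_1,\theta)+\int_{R_1}^r\partial_r\psi_n^-\,dr'$, and use $\rho(\theta)-R_1\leq R_1/n$ to get $\|\psi_n^-\|_{L^2(U_n\setminus B_{R_1})}^2\leq C/n$. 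Your passage argument also needs tightening: the nodal set need not be a ``full radial interface'' in any $S_k$, and may miss the passages entirely when $\rho_n\leq R_1$ (in which case $\psi_n^-\equiv 0$ there since $\Omega_n\setminus B_{\rho_n}$ is connected and contains $A\not\subset\Omega_n^-$); what Lemma~\ref{lemma:3:nodalset} and the $n$-fold symmetry actually give you is a zero of $\psi_n$ at each radial level $r\in I_n$ within each passage, which is exactly what the one-dimensional Poincar\'e inequality in the thin angular direction requires.
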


\begin{proof}
We first prove the easier Dirichlet case, which illustrates the underlying ideas more clearly. So we start this case by noting that $\psi_n^- \in H^1_0 (\Omega_n^-) \cap H^1(\R^2)$ is the first Dirichlet eigenfunction of $\Omega_n^-$. Considering $\psi_n^- \in H^1_0 (B_{R_3})$ (recalling $\Omega_n\subset B_{R_3}$) and observing that
\begin{displaymath}
	\|\nabla \psi_n^-\|_{L^2(B_{R_3})}^2=\lambda_n \|\psi_n^-\|_{L^2(B_{R_3})}^2 = \lambda_n
\end{displaymath}
(since this is true on $\Omega_n^-$ and the integrands are zero on $B_{R_3} \setminus \Omega_n^-$), we have that $\{\psi_n^-\}$ is bounded in $H^1_0(B_{R_3})$ and hence there exists $u \in H^1(\R^2)$ such that $\psi_n^- \rightharpoonup u$ weakly in $H^1(\R^2)$ and strongly in $L^2(\R^2)$; in particular, $\|u\|_{L^2(\R^2)}=1$.

The key point is that the support of $u$, $\supp u \subset B_{R_1}$. To see this, we note that $\Omega_n^- \cup B_{R_1} \to B_{R_1}$ in the sense of Mosco (see, e.g., \cite[Theorem~7.5]{daners:03}), since $\Omega_n^- \cup B_{R_1} \supset B_{R_1}$, $B_{R_1}$ has smooth boundary and $|\Omega_n^- \setminus B_{R_1}| \to 0$ by Lemma~\ref{lemma:5:wherenodeis}. By definition of Mosco convergence, $u \in H^1_0(B_{R_1})$. In particular, $\|u\|_{L^2(\R^2)}=\|u\|_{L^2(B_{R_1})}=1$.

For each $r\in (0,R_1)$ let $\delta=\delta(r,u)\geq 0$ be the number such that $\|u\|_{L^2(B_r)}=1-\delta/2$. By the monotone convergence theorem, $\delta\to 0$ as $r\to R_1$. Now choose $n_0 = n_0(\delta(r)) = n_0(r) \geq 1$ such that $\|\psi_n^- - u\|_{L^2(\R^2)} \leq \delta/2$ for all $n \geq n_0$. Using the reversed triangle inequality, for all $n\geq n_0$ we have
\begin{displaymath}
	\|\psi_n^-\|_{L^2(B_r)} \geq \|u\|_{L^2(B_r)}-\|\psi_n^- -u\|_{L^2(\R^2)} \geq 1-\delta.
\end{displaymath}

In the Robin case, we again extend $\psi_n^-$ by $0$ to obtain a function in $L^2(\R^2)$ (but not $H^1(\R^2)$), which in a slight abuse of notation we still denote by $\psi_n^-$. However, standard results as in \cite[Chapter~7]{gilbarg:83} imply $\psi_n^- \in H^1(\Omega_n)$ with $\|\psi_n^-\|_{L^2(\Omega_n)}=1$; moreover, using $\psi_n^-$ as a test function in the weak form of $-\Delta\psi_n = \mu_n\psi_n$ in $\Omega_n$ gives
\begin{equation}
\label{eq:5:robinnormbound}
	\mu_n\|\psi_n^-\|_{L^2(\Omega_n)}^2=\|\nabla\psi_n^-\|_{L^2(\Omega_n)}^2+\beta\|\psi_n^-\|_{L^2
	(\partial\Omega_n)}^2.
\end{equation}
In particular, since the left hand side is uniformly bounded in $n$, the $\psi_n^-$ have bounded norm in $H^1(\Omega_n)$ and trace with uniformly bounded norm in $L^2(\partial\Omega_n)$.

We first show that $\psi_n^-\to 0$ in the $L^2$-norm outside $B_{R_1}$. Here we make use again of the trick from \cite{kennedy:09}, combined with a variant of the Mosco convergence argument from the Dirichlet case. That is, we first construct an appropriate smooth set $\widetilde \Omega_n$ containing $\Omega_n^- \setminus B_{R_1}$, with $|\widetilde \Omega_n| \to 0$ as $n \to \infty$; we will then show $\|\varphi \psi_n^-\|_{L^2(\widetilde\Omega_n)} \to 0$ for earch fixed $\varphi \in C_c^\infty (\R^N \setminus \overline B_{R_1})$. So for each $n$, we choose $R_2 < \rho_{2,n} < \rho_{3,n} < R_3$ such that
\begin{displaymath}
	|A_{R_2,\rho_{2,n}} \cup A_{\rho_{3,n},R_3}| \leq \frac{1}{3n}
\end{displaymath}
and, also choosing $r_n \in (0,R_1)$ such that $|A_{r_n,R_1}| \leq 1/(3n)$, we set
\begin{displaymath}
	\widehat\Omega_n:=(\Omega_n \cap A_{r_n,\rho_{2,n}})\cup A_{\rho_{3,n},R_3}\cup (\Omega_n^-\cap A).
\end{displaymath}
That is, we take the passages linking $U_n$ with $A$, thin ``strips" near the boundary of $U_n$ and $A$, and the part of $\Omega_n^-$ inside $A$. This set has a boundary which may be written as the disjoint union of three open and closed parts:
\begin{displaymath}
	\partial\widehat\Omega_n=\partial\Omega_n \cup \partial B_{r_n} \cup (\partial\widehat\Omega_n \cap A),
\end{displaymath}
where the first two are smooth (Lipschitz or better), but the latter may not be. So we approximate $\widehat\Omega_n$ by a Lipschitz domain $\widetilde\Omega_n \supset \widehat\Omega_n$, such that $\partial\widetilde\Omega_n = \partial\Omega_n \cup \partial B_{r_n} \cup \Gamma_n$, with $\Gamma_n \subsubset A$ of class $C^\infty$, and such that $|\widetilde \Omega_n \setminus \widehat \Omega_n | \leq 1/(3n)$.

In particular, this means that (i) $\Omega_n^-\setminus B_{R_1} \subset \widetilde\Omega_n \subset \Omega_n$, (ii) $\partial \Omega_n \subset \partial \widetilde \Omega_n$, (iii) $\widetilde\Omega_n$ is Lipschitz, and (iv) $|\widetilde \Omega_n| \leq 1/n + |\Omega_n \cap A_{R_1,R_2}| \to 0$ as $n \to \infty$. (Here we also use that $|U_n\setminus B_{R_1}| \to 0$ and that the total area of the passages $S_k \cap B_{A_{R_1,R_2}}$ (cf.\eqref{eq:3:omegas}) also goes to $0$.)

Now fix $\varphi \in C_c^\infty(\R^2\setminus \overline B_{R_1})$ arbitrary. We will show $\|\varphi \psi_n^-\|_{L^2(\widetilde \Omega_n)} \to 0$ as $n\to\infty$. Observing that $\psi_n^- \in H^1(\widetilde\Omega_n)$ by (i), we also have $\varphi\psi_n^- \in H^1(\widetilde\Omega_n)$ with $\supp (\varphi\psi_n^-) \subsubset \R^2\setminus \overline B_{R_1}$. We consider the first Robin eigenvalue of $\widetilde\Omega_n$. By the variational characterisation, for each $n$,
\begin{equation}
\label{eq:5:zerotoinfty}
\mu_1(\widetilde\Omega_n) \leq \frac{\int_{\widetilde\Omega_n}|\nabla\psi_n^-|^2\,dx+\int_{\partial\widetilde\Omega_n}
	\beta|\varphi\psi_n^-|^2\,d\sigma}{\int_{\widetilde\Omega_n}|\varphi\psi_n^-|^2\,dx}.
\end{equation}
Since $|\widetilde\Omega_n|\to 0$, we may apply the Robin Faber-Krahn inequality (see, e.g., \cite[Theorem~1.1]{bucur:10}) to the Lipschitz domain $\widetilde\Omega_n$ to see that $\mu_1(\widetilde\Omega_n)\to 0$ as $n \to \infty$. This forces $\|\varphi \psi_n^-\|_{L^2 (\widetilde \Omega_n)} \to 0$, provided that the numerator on the right hand side is bounded in $n$. To show this, we observe that since $\psi_n^- = 0$ on $\Gamma_n$ and $\varphi=0$ on $\partial B_{R_1}$, the boundary integral satisfies
\begin{displaymath}
\int_{\partial\widetilde\Omega_n}\beta|\varphi\psi_n^-|^2\,d\sigma=\int_{\partial\Omega_n}\beta|\varphi\psi_n^-|^2\,d\sigma
	\leq \beta\|\varphi\|_{L^\infty(\R^2)}\|\psi_n^-\|_{L^2(\partial\Omega_n)}^2,
\end{displaymath}
which is bounded in $n$ by \eqref{eq:5:robinnormbound}. For the volume integral on the right hand side of \eqref{eq:5:zerotoinfty}, we estimate
\begin{displaymath}
\begin{split}
\int_{\widetilde\Omega_n} |\nabla(\varphi\psi_n^-)|^2\,dx 
\leq \int_{\widetilde\Omega_n}|\varphi\nabla\psi_n^-|^2+|\psi_n^-\nabla\varphi|^2+2|\varphi\nabla\psi_n^-||\psi_n^-
	\nabla\varphi|\,dx\\
\leq c_0 (\|\nabla\psi_n^-\|_{L^2(\Omega_n)}^2+\|\psi_n^-\|_{L^2(\Omega_n)}^2+2\|\nabla\psi_n^-\|_{L^2(\Omega_n)}
	\|\psi_n^-\|_{L^2(\Omega_n)}),
\end{split}
\end{displaymath}
where $c_0$ is an upper bound of $\|\varphi\|_{L^\infty(\R^2)}^2$ and $\|\nabla\varphi\|_{L^\infty(\R^2)}^2$, and using (i). As $\varphi$ is fixed and $\|\psi_n^-\|_{H^1(\Omega_n)}$ is uniformly bounded by our normalisation and \eqref{eq:5:robinnormbound}, this expression is bounded in $n$. This forces $\|\varphi\psi_n^-\|_{L^2(\widetilde\Omega_n)} = \|\varphi\psi_n^-\|_{L^2(\Omega_n)} = \|\varphi\psi_n^-\|_{L^2(\R^2\setminus \overline B_{R_1})} \to 0$ (where we recall $\psi_n^-=0$ outside $\Omega_n^-$). Since $\varphi \in C_c^\infty(\R^2\setminus \overline B_{R_1})$ was arbitrary and $C_c^\infty(\R^2\setminus \overline B_{R_1})$ is dense in $L^2(\R^2\setminus \overline B_{R_1})$, this means that $\|\psi_n^-\|_{L^2(\R^2 \setminus \overline B_{R_1})} \to 0$.

This in turn implies $\|\psi_n^-\|_{L^2(B_{R_1})} \to 1$. Now, noting that $\psi_n^- \in H^1(B_{R_1})$ is bounded in the $H^1$-norm (using \eqref{eq:5:robinnormbound} and $B_{R_1} \subset \Omega_n$), we extract a weakly convergent subsequence, still denoted by $\psi_n^- \rightharpoonup u \in H^1(B_{R_1})$. Since $\partial B_{R_1}$ is smooth, Rellich's theorem can be applied and so $\psi_n^- \to u$ strongly in $L^2(B_{R_1})$. This also means $\|u\|_{L^2(B_{R_1})}=1$. We may now repeat the proof of the Dirichlet case verbatim to obtain the conclusion of the lemma.
\end{proof}

From now there is no difference between the Robin and Dirichlet cases. We next convert the $L^2$-bounds into $L^\infty$-bounds. First note that we always have the easy lower bound
\begin{displaymath}
	1-\delta(r)\leq\|\psi_n^-\|_{L^2(B_r)}\leq |B_r|^\frac{1}{2}\|\psi_n^-\|_{L^\infty(B_r)},
\end{displaymath}
with $\delta(r)$ as in Lemma~\ref{lemma:5:core}. Thus for $r\in [R_0,R_1]$, say, for all $n (=n(r))$ sufficiently large,
\begin{displaymath}
	\|\psi_n^-\|_{L^\infty(B_r)}\geq \frac{1-\delta(r)}{|B_{R_0}|^\frac{1}{2}}.
\end{displaymath}
The uniform upper bound follows from standard elliptic theory. Noting that $\psi_n^-$ extended by $0$ is a $H^1$-subsolution of $(\Delta+ \mu_n) u=0$ in $B_{R_1}$ for each $n$, \cite[Theorem~8.17]{gilbarg:83} applied to $L=\Delta+\mu_n$ and $u=\psi_n^-$, together with an elementary rescaling argument, imply that if $\Lambda>0$ is any uniform upper bound for the $\mu_n$, and if $0<r_1<r_2<R_1$, then there exists $C=C(r_1,r_2,\Lambda)>0$ such that
\begin{displaymath}
	\|\psi_n^-\|_{L^\infty(B_{r_1})} \leq C\|\psi_n^-\|_{L^2(B_{r_2})} \leq C.
\end{displaymath}
So for any $r_1<r_2$ sufficiently close to $R_1$ (say, such that $\delta(r_1)\leq 1/2$), there exist $C_1,C_2>0$ and $n_0 \geq 1$ depending on $r_1,r_2,\Lambda$ but not $n$, such that
\begin{equation}
\label{eq:5:minusbound}
	C_1\leq\|\psi_n^-\|_{L^\infty(B_r)}\leq C_2
\end{equation}
for all $n\geq n_0$ and $r\in (r_1,r_2)$.

This gives Theorem~\ref{th:3:necest} in the special case of the functions $\psi_n^-$. The final step in the proof is therefore to estimate $\|\psi_n^+\|_{L^\infty(B_r)}$ in terms of $\|\psi_n^-\|_{L^\infty(B_r)}$, and vice versa; Theorem~\ref{th:3:necest} will follow immediately from \eqref{eq:5:minusbound} and the next lemma. Here we use an argument of D.~Mangoubi \cite{mangoubi:08}, combining a weak Harnack inequality and a weak maximum principle from \cite{gilbarg:83}.

\begin{lemma}
\label{lemma:5:twowaybounds}
Let $R_0<r_1<r_2<R_1$. There exist $C>0$ and $n_0\geq 1$ depending only on $r_1,r_2$ and $\Lambda$, the upper bound for $\mu_n$, such that
\begin{displaymath}
\begin{aligned}
	\|\psi_n^-\|_{L^\infty(B_{r_1})} &\leq C \|\psi_n^+\|_{L^\infty(B_{r_2})}\\
	\|\psi_n^+\|_{L^\infty(B_{r_1})} &\leq C \|\psi_n^-\|_{L^\infty(B_{r_2})}
\end{aligned}
\end{displaymath}
for all $n \geq n_0$.
\end{lemma}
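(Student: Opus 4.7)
The strategy is to adapt Mangoubi's local two-sided supremum estimate from \cite{mangoubi:08}: for a solution $\psi$ of $\Delta\psi+V\psi=0$ in a ball $B_{2\rho}(x_0)$ with $\|V\|_{L^\infty}\rho^2$ bounded, if $\psi$ vanishes at some point of $B_\rho(x_0)$, then $\sup_{B_\rho(x_0)}\psi^+$ and $\sup_{B_\rho(x_0)}\psi^-$ are comparable, with a constant depending only on $\|V\|_{L^\infty}\rho^2$. The proof of such a local estimate combines the local boundedness of subsolutions (Theorem~8.17 of \cite{gilbarg:83}) applied to $\psi^\pm$, both of which are non-negative subsolutions of $\Delta u + Vu \geq 0$ by Kato's inequality, with the weak Harnack inequality (Theorem~8.18 of \cite{gilbarg:83}) applied to a non-negative supersolution (modulo a controllable source term when $\|V\|_{L^\infty}\rho^2$ is small) that bridges across the nodal set — the vanishing point of $\psi$ is what forces the two sides to balance.

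To apply this globally in our setting, first observe that by Remark~\ref{rem:3:location}, for $n$ sufficiently large the nodal set $\mathcal N_n$ is contained in an arbitrarily thin annulus around $\partial B_{R_0}$, so in particular $\mathcal N_n \subsubset B_{r_1}$ and every point of $B_{r_1}$ lies within a bounded distance of $\mathcal N_n$. Fix a radius $\rho>0$ small enough that the local estimate applies uniformly for $\mu_n \leq \Lambda$, and also small enough that $B_{2\rho}(x) \subset B_{r_2}$ for every $x\in \overline{B_{r_1}}$. For any $x\in \overline{B_{r_1}}$, choose a nearby nodal point $x^*\in\mathcal N_n$ and apply the local estimate on $B_{2\rho}(x^*)$, giving comparability of $\sup_{B_\rho(x^*)}\psi_n^+$ and $\sup_{B_\rho(x^*)}\psi_n^-$ with a constant depending only on $\Lambda\rho^2$. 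Then, within the respective nodal region $\Omega_n^\pm$ — where $\psi_n^\pm$ is a \emph{positive} solution of $\Delta u + \mu_n u = 0$ and so satisfies the classical interior Harnack inequality — use a Harnack chain argument (in the spirit of Lemma~\ref{lemma:4:chain}) to propagate the comparison from a neighbourhood of $x^*$ out to $x$. This yields $\|\psi_n^+\|_{L^\infty(B_{r_1})} \leq C\|\psi_n^-\|_{L^\infty(B_{r_2})}$, and the reverse inequality is entirely symmetric.

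The main obstacle is ensuring that the Harnack chains can be constructed with a number of balls that is bounded uniformly in $n$. This relies on the asymptotic geometry of $\Omega_n^\pm \cap B_{r_2}$: by Remark~\ref{rem:3:location} together with Lemma~\ref{lemma:5:wherenodeis}, $\Omega_n^-$ approaches the ball $B_{R_0}$ and $\Omega_n^+\cap B_{r_2}$ approaches the annular region $A_{R_0, r_2}$, so for all sufficiently large $n$ both nodal regions within $B_{r_2}$ contain fixed bulk pieces of positive width, through which a Harnack chain of uniformly bounded length (depending only on $r_1$, $r_2$, $\rho$ and $\Lambda$) can be threaded. Combined with \eqref{eq:5:minusbound}, the two inequalities of the lemma then supply the bounds required to complete the proof of Theorem~\ref{th:3:necest}.
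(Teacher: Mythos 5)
Your proposal correctly identifies the key ingredients (Mangoubi's argument, the weak Harnack inequality of \cite[Theorem~8.18]{gilbarg:83}, the local boundedness estimate of \cite[Theorem~8.17]{gilbarg:83}, and the fact that a zero of $\psi_n$ pins the two signs together), but the way you globalise the local estimate is circular. You invoke Remark~\ref{rem:3:location} --- the confinement of $\mathcal N_n$ to a thin annulus about $\partial B_{R_0}$ and the convergence of $\Omega_n^\pm\cap B_{r_2}$ to $B_{R_0}$ and $A_{R_0,r_2}$ --- both to locate the nodal points at which you apply the local two-sided estimate and, more critically, to guarantee that your Harnack chains inside $\Omega_n^\pm$ have uniformly bounded length and run through ``fixed bulk pieces'' of the nodal domains. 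But Remark~\ref{rem:3:location} is a consequence of $\rho_n\to R_0$, which is deduced from \eqref{eq:3:location}, which requires Theorem~\ref{th:size} to be applicable, which requires Theorem~\ref{th:3:necest}, whose proof is completed precisely by the present lemma together with \eqref{eq:5:minusbound}. At the point where Lemma~\ref{lemma:5:twowaybounds} must be proved, the only information available about the nodal set in $B_{r_2}$ is Lemma~\ref{lemma:3:nodalset}(ii) (it meets the sphere of radius $R_n$, with $R_n\to R_0<r_1$) and the measure bounds of Lemmas~\ref{lemma:5:wherenodeis} and~\ref{lemma:5:core}; a priori $\mathcal N_n$ could still wander throughout $B_{r_2}$ and the nodal domains could have thin tentacles there, so neither the uniform chain length nor the containment of doubled balls in a single nodal domain is available. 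Since the Harnack constant for $\psi_n^\pm$ degenerates as a chain ball approaches the nodal set, the propagation step cannot be carried out without exactly the geometric control you are trying to establish.

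The fix is to avoid localising at nodal points altogether: set $M_n:=\sup_{B_{r_2}}\psi_n^+$ and work with the single auxiliary function $\varphi_n:=M_n-\psi_n$, which is nonnegative on all of $B_{r_2}$ irrespective of where the nodal set lies, and satisfies $0\leq(\Delta+\mu_n)\varphi_n=\mu_n M_n\leq\Lambda M_n$. The weak Harnack inequality on $B_{(r_1+r_2)/2}$ bounds the $L^p$-mean of $\varphi_n$ by $C(\inf\varphi_n+c\Lambda M_n)$, and the \emph{only} geometric input needed is that $\psi_n$ vanishes somewhere in that ball (so $\inf\varphi_n\leq M_n$), which follows already from $R_n<r_1$ for large $n$. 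Then \cite[Theorem~8.17]{gilbarg:83} applied to the subsolution $\varphi_n$ converts the $L^p$-mean bound into $\sup_{B_{r_1}}\varphi_n\leq CM_n$, hence $\sup_{B_{r_1}}\psi_n^-\leq C\sup_{B_{r_2}}\psi_n^+$, with the reverse inequality by exchanging signs. This single global application of the two Gilbarg--Trudinger estimates replaces your local-estimate-plus-Harnack-chain scheme and removes the dependence on information that is only established downstream.
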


\begin{proof}
(Cf.~\cite[Theorem~3.4]{mangoubi:08}.) We first remark that, given $r_1>R_0$, for $n$ sufficiently large, $r_1>R_n$, that is, $\psi_n$ must change sign in $B_{r_1}$ (see Lemma~\ref{lemma:3:nodalset}(ii) and note that by choice of $\varepsilon(n)$, $R_n=R_{n,\varepsilon(n)}\to R_0$). We write $L_n \psi_n := (\Delta + \mu_n)\psi_n = 0$, and given $r_1$ and $r_2$, $M_n:= \sup_{B_{r_2}}\psi_n^+$. Consider the function $\varphi_n := M_n - \psi_n$. We have $L_n\varphi_n=(\Delta+\mu_n)(M_n-\psi_n)=\mu_n M_n$, so that, if $\psi_n\leq 0$, then $0\leq -\psi_n \leq M_n-\psi_n=\varphi_n$; if $\psi_n\geq 0$, then $\varphi_n \geq 0$ still, while also
\begin{displaymath}
	0\leq L_n\varphi_n \leq \mu_n M_n \leq \Lambda M_n.
\end{displaymath}
We apply the weak Harnack inequality \cite[Theorem~8.18 or 9.22]{gilbarg:83}, suitably rescaled, to $L_n\varphi_n \leq \Lambda M_n$ on the ball $B:=B_{(r_1+r_2)/2}$. Thus there exist positive constants $p,C$ depending only on $r_1,r_2,\Lambda$ such that
\begin{equation}
\label{eq:5:pbound}
\Bigl(\frac{1}{|B|}\int_B {\varphi_n}^p\,dx\Bigr)^\frac{1}{p} \leq C(\inf_B \varphi_n+\frac{r_1+r_2}{2}\|\Lambda
	M_n\|_{L^2(B)}).
\end{equation}
Absorbing $(r_1+r_2)/2\|\Lambda\|_{L^2(B)}$ into the constant $C$, and using the bound
\begin{displaymath}
	\inf_B \varphi_n = \inf_B (M_n-\psi_n)=M_n+\inf_B(-\psi_n)\leq M_n
\end{displaymath}
since $\psi_n=0$ somewhere in $B$ (as $R_n<r_1$), we rewrite \eqref{eq:5:pbound} as
\begin{equation}
\label{eq:5:pbound2}
\Bigl(\frac{1}{|B|}\int_B {\varphi_n}^p\,dx\Bigr)^\frac{1}{p} \leq \widetilde CM_n,
\end{equation}
where $\widetilde C$ and $p$ depend only on $r_1,r_2,\Lambda$.

We now apply \cite[Theorem~8.17 or 9.20]{gilbarg:83}, again rescaled, to $L_n\varphi_n \geq 0$ on $B_{r_1}$ and $B=B_{(r_1+r_2)/2}$. That is, for any $p\in (0,\infty)$, there exists $K=K(p,r_1,r_2,\Lambda)>0$ such that
\begin{displaymath}
\sup_{B_{r_1}}\varphi_n\leq K\Bigl(\frac{1}{|B|}\int_B {\varphi_n}^p\,dx\Bigr)^\frac{1}{p} \leq K\widetilde CM_n
\end{displaymath}
by \eqref{eq:5:pbound2}, if we choose the $p\in (0,\infty)$ for which \eqref{eq:5:pbound2} holds (independent of $\psi_n, M_n$). Rewriting this,
\begin{displaymath}
\sup_{B_{r_1}}(-\psi_n) \leq \sup_{B_{r_1}}(M_n-\psi_n)\leq K\widetilde CM_n=K\widetilde C\sup_{B_{r_2}}\psi_n^+,
\end{displaymath}
that is,
\begin{displaymath}
\sup_{B_{r_1}} \psi_n^- \leq C(r_1,r_2,\Lambda)\sup_{B_{r_2}}\psi_n^+.
\end{displaymath}
We can obtain the other inequality, with the same constant, by interchanging the r\^oles of $\psi_n^-$ and $\psi_n^+$.
\end{proof}

\bibliographystyle{amsplain}

\providecommand{\bysame}{\leavevmode\hbox to3em{\hrulefill}\thinspace}
\providecommand{\href}[2]{#2}

\end{document}